\documentclass{amsart}
\usepackage{amsmath}
\usepackage{amsthm}
\usepackage{amsbsy}
\usepackage{amsopn}
\usepackage{amsmath,amscd}
\usepackage{amssymb}
\usepackage{amsfonts}
\usepackage{multirow}
\usepackage[official]{eurosym}
\usepackage{comment}
\usepackage{pdfpages}

\newcommand{\ki}[2]{{\color{green!80!black} #1
}{\color{blue!90!black}#2 
}}

\usepackage{tikz}
\usetikzlibrary{arrows.meta}
\usetikzlibrary{patterns}
\usetikzlibrary{intersections}
\usetikzlibrary{matrix}
\usepackage{pgfplots}
\pgfplotsset{compat=1.14}
\tikzset{
  partial ellipse/.style args={#1:#2:#3}{
    insert path={+ (#1:#3) arc (#1:#2:#3)}
  }
}
\tikzset{
  partial ellipsecake/.style args={#1:#2:#3:#4}{
    insert path={+ (#1:#3) arc (#1:#2:#3 and #4) -- (0,0)  -- (#3,0)}
  }
}
\makeatletter
\tikzset{
  use path for main/.code={%
    \tikz@addmode{%
      \expandafter\pgfsyssoftpath@setcurrentpath\csname tikz@intersect@path@name@#1\endcsname
    }%
  },
  use path for actions/.code={%
    \expandafter\def\expandafter\tikz@preactions\expandafter{\tikz@preactions\expandafter\let\expandafter\tikz@actions@path\csname tikz@intersect@path@name@#1\endcsname}%
  },
  use path/.style={%
    use path for main=#1,
    use path for actions=#1,
  }
}
\makeatother
\usepackage{tikz-cd}

\newcommand{\ve}{\varepsilon}

\newcommand{\mat}[1]{\ensuremath{
\left[\begin{matrix}#1
\end{matrix}\right]
}}

\newcommand\cC{\mathcal{C}}

\newcommand\cF{\mathcal{F}}

\newcommand\cH{\mathcal{H}}

\newcommand\cM{\mathcal{M}}

\newcommand\cQ{\mathcal{Q}}

\newcommand\cT{\mathcal{T}}

\def\bC{\mathbf{C}}
\def\bD{\mathbf{D}}

\def\bN{\mathbf{N}}

\def\bQ{\mathbf{Q}}
\def\bR{\mathbf{R}}

\def\bZ{\mathbf{Z}}

\usepackage{graphicx}

\newcommand{\xxrarrow}[1]{\ensuremath{
\mathop{\longrightarrow}\limits^{#1}}}

\usetikzlibrary{arrows,decorations.pathmorphing,backgrounds,fit,positioning,shapes.symbols,chains,calc}
\usepackage[all]{xy}
\usepackage[hmargin=3cm,vmargin=3cm]{geometry}
\usepackage{setspace,kantlipsum}

\usepackage{booktabs}

\newtheorem{theorem}{Theorem}[section]
\newtheorem{lemma}[theorem]{Lemma}
\newtheorem{conjecture}[theorem]{Conjecture}
\newtheorem{proposition}[theorem]{Proposition}
\newtheorem{corollary}[theorem]{Corollary}

\theoremstyle{definition}
\newtheorem{definition}[theorem]{Definition}
\theoremstyle{remark}

\newtheorem{problem}[theorem]{Problem}

\newtheorem{example}[theorem]{Example}
\newtheorem{inductive lemma}[theorem]{Inductive Lemma}
\newtheorem{remark}[theorem]{Remark}

\newtheorem{warning}[theorem]{Warning}
\newtheorem{question}[theorem]{Question}

\title{Legendrian and Lagrangian higher torsion}

\author{Daniel  \'Alvarez-Gavela}

\author{Kiyoshi Igusa}

\author{Michael G Sullivan}

\begin{document}
\maketitle

\begin{abstract}
Let $M$ be a closed manifold. We introduce a family of Legendrian isotopy invariants for Legendrians in $J^1M$, which we collectively call Legendrian higher torsion. Given a choice of a class $\cF$ of fibre bundles over $M$, equipped with suitable unitary local systems, the Legendrian higher torsion of a Legendrian $\Lambda \subset J^1M$ is the subset of $H^*(M;\bR)$ consisting of higher Reidemeister torsion cohomology classes of fibre bundles $W$ over $M$ in the class $\cF$ such that $\Lambda$ admits a generating function on a stabilization of $W$.  For the class of tube bundles in the sense of Waldhausen we call the invariant tube torsion. We show that the tube torsion of a nearby Lagrangian $L \subset T^*M$ is well-defined when the stable Gauss map $L \to U/O$ is trivial (for example when $L$ is a nearby Lagrangian homotopy sphere) and it consists of a union of cosets of a normalized version of the Pontryagin character. We also identify a distinguished coset, invariant under Hamiltonian isotopy of $L$, which we call nearby Lagrangian torsion. We do not know whether nearby Lagrangians must have trivial tube torsion, as would follow from the nearby Lagrangian conjecture. 
However, we show that there exist Legendrians $\Lambda \subset J^1M$ with nontrivial tube torsion whose projection $\Lambda \to M$ is homotopic to a diffeomorphism.

 \end{abstract}
 
 \onehalfspacing
 \tableofcontents
 
 \section{Introduction}\label{sec: intro}
 \subsection{Main results}\label{sec: main results}
 
 Fix a closed, connected, smooth manifold $M$. We denote by $J^1M$ its 1-jet space, where we recall $J^1M = T^*M \times \bR$ with coordinates $(q,p,z)\in J^1M$ corresponding to $q \in M$, $p \in T^*_qM$ and $z \in \bR$, with the contact form $dz-pdq$ endowing $J^1M$ with its standard contact structure. In this article we introduce a family of new K-theoretic Legendrian isotopy invariants for Legendrian submanifolds $\Lambda \subset J^1M$, which we call {\em Legendrian higher torsion}. These invariants are defined in terms of generating functions for $\Lambda$ and have several incarnations depending on the class of fiber bundles one chooses to consider as the possible domains of generating functions for $\Lambda$. 
 
 In general, one has notion of a {\em torsion pair}, which consists of a pair $(W,\rho)$ where $F \to W \to M$ is a fiber bundle and $\rho : \pi_1 W \to U(1)$ is a rank 1 unitary local system such that $H_*(F;\bC^\rho)$ is a unipotent $\pi_1M$-module. If $F$ is closed no more data is needed, but if $F$ has boundary or is non-compact one should also fix the behavior near the boundary or at infinity for the class of functions $f:W \to \bR$ we allow ourselves to consider as possible generating functions. For a torsion pair $(W,\rho)$ the theory of higher torsion characteristic classes due to the second author and J. Klein produces cohomology classes $\tau_k(W,\rho) \in H^{2k}(M;\bR)$ which are invariants of $(W,\rho)$ up to fibered diffeomorphism of fiber bundles (compatible with the fixed behavior at infinity) and up to the notion of stable equivalence which replaces $W$ with $W \times \bR^4$ and $f:W \to \bR$ with $f^{\text{s}}(w,x_1,x_2,y_1,y_2)=f(w)+x_1^2+x_2^2-y_1^2-y_2^2$ \cite{I02,I05,IK93a,IK93b,K89}. 
 
 When $H_*(F,\bC^\rho)=0$, for $k=0$ the class $\tau_0(W,\rho) \in H^0(M;\bR) \simeq \bR$ recovers the usual Reidemeister torsion of the fiber $F$ with respect to $\rho|_{\pi_1 F}$ and for $k>0$ the class $\tau_k(W,\rho)$ may be thought of as a parametrized generalization of this Reidemeister torsion. 
 
Equipped with the theory of higher torsion classes of fiber bundles, we define the $k$-th {\em Legendrian higher torsion} of a Legendrian $\Lambda \subset J^1M$ in a fixed collection $\cF$ of torsion pairs to be the subset $\tau_k(\Lambda,\cF) \subset H^{2k}(W,\rho)$  consisting of all degree $2k$ higher torsion characteristic classes $\tau_k(W,\rho) \in H^{2k}(M;\bR)$ of torsion pairs $(W,\rho)$ in the class $\cF$ such that $\Lambda$ admits a generating function on a stabilization of the fiber bundle $W \to M$. We denote $\tau(\Lambda,\cF) = \coprod_k  \tau_k(\Lambda,\cF)  \subset H^*(M;\bR)$ the {\em total Legendrian higher torsion} of $\Lambda$ in the fixed collection $\cF$. 

The starting point of the theory of Legendrian higher torsion is the following consequence of the homotopy lifting property for generating functions under Legendrian isotopies (up to stabilization). 
 
 \begin{theorem} The subset $\tau(\Lambda,\cF) \subset H^*(M;\bR)$ is a Legendrian isotopy invariant of $\Lambda$. 
 \end{theorem}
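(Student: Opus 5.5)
It suffices to show that if $\Lambda_t$, $t\in[0,1]$, is a Legendrian isotopy, then $\tau(\Lambda_0,\cF)\subset\tau(\Lambda_1,\cF)$. Reversing the isotopy then gives the opposite inclusion. So fix $c\in\tau_k(\Lambda_0,\cF)$. By definition there is a torsion pair $(W,\rho)$ in $\cF$ with $\tau_k(W,\rho)=c$, together with a generating function $f_0$ for $\Lambda_0$ defined on some stabilization $W\times\bR^{N}\to M$, where $f_0$ is $f$ plus a nondegenerate quadratic form in the $\bR^N$ variables and has the behaviour at infinity prescribed by $\cF$. The aim is a generating function $f_1$ for $\Lambda_1$ on a (possibly further) stabilization of the same bundle $W$. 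Once we have it, $c=\tau_k(W,\rho)\in\tau_k(\Lambda_1,\cF)$ by the definition, since membership depends only on the pair $(W,\rho)$ and not on the function.

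The main input is the homotopy lifting property for generating functions under Legendrian isotopy, in the form due to Chekanov and Théret, including its version for generating functions quadratic at infinity on fibre bundles. The steps are as follows.

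First, realize the isotopy $\Lambda_t$ by a compactly supported contact isotopy $\varphi_t$ of $J^1M$. This is the standard Legendrian isotopy extension theorem.

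Second, subdivide $[0,1]$ into short intervals, on each of which $\varphi_t$ is $C^1$-small. On each interval, write the composite with the previous generating function via a broken-geodesic or composition formula. This adds finitely many new fibre variables $\xi\in\bR^{m}$ and produces a family
\[ f_t(w,\eta,\xi)=f_0(w,\eta)+S_t(q,\eta,\xi), \]
in which the correction term is quadratic at infinity in $\xi$. Iterating over the intervals gives $f_1$ on $W\times\bR^{N+m'}$.

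Third, check that $f_1$ lives on a stabilization of $W$ in the allowed sense. Outside a compact set it should agree with $f\oplus Q$, with $Q$ a nondegenerate quadratic form. Up to a fibrewise linear change of coordinates and further stabilization, $Q$ can be brought to the form $\sum x_i^2-\sum y_i^2$ with equal numbers of $+$ and $-$ squares in blocks of four, which is the stabilization used in defining $\tau_k$.

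Finally, confirm that the higher torsion is unchanged by these operations. This covers the fibre-preserving diffeomorphisms used in the coordinate changes and the extra stabilizations. It follows from the invariance of $\tau_k(W,\rho)$ under fibred diffeomorphism and stable equivalence recalled above.

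The main obstacle is the third step. The boundary or infinity conditions of the class $\cF$ must be preserved by the lifting construction, and the index and form of the added quadratic term must be normalized to the four-variable stabilization. If the quadratic form has mismatched signature or is twisted over $M$, I would first stabilize further by $x^2-y^2$ pairs to make it hyperbolic. I would then use the contractibility of the relevant space of nondegenerate forms of fixed signature after stabilization to trivialize it as a bundle of forms over $M$, so that the result is fibrewise isomorphic to a standard stabilization. The remaining care is to keep the support of the deformation away from the boundary or end of $W$, using compact support of $\varphi_t$ in $J^1M$ together with the fixed behaviour of $f$ near $\partial F$.
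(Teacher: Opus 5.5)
Your outer argument is exactly the paper's proof. You reduce to one inclusion by reversing the isotopy, produce a generating function for $\Lambda_1$ in $\mathcal{S}(W)$ for the \emph{same} $W$, and note that membership in $\tau(\Lambda_1,\cF)$ depends only on $(W,\rho)$. The paper does nothing more than quote the homotopy lifting theorem in the form of \cite{EG98} (see item (3) of Lemma~\ref{lem: homotopy lifting}). There, after stabilizing by the \emph{constant} standard form $Q$, the lift has the shape $f_t=f+\varepsilon_t+Q$ with $\varepsilon_t$ compactly supported, so it lies in $\mathcal{S}(W)$ by definition and no normalization is needed.

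The trouble is your Step 3, which you correctly flag as the crux but resolve with a false claim. The space of nondegenerate quadratic forms of fixed signature is not contractible, even after stabilization: $\cQ_{k,N}$ is homotopy equivalent to a Grassmannian, and stably to $\bZ\times BO$. So a family of forms twisted over $M$, with stable negative eigenbundle $E$, cannot in general be made constant. Nor is the twist harmless. By the framing principle it changes the torsion by $\chi(F)\,p_k(E)$, up to sign. In the tube setting this is Proposition~\ref{prop: effect twisted stab}, and it is the reason tube torsion is only a union of cosets of $\text{im}(p)$. (Also, adding $x^2-y^2$ pairs never repairs a signature mismatch; for a constant form you would add definite squares.)

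The twist is not hypothetical. If you globalize the composition formula over a general $M$, the old base point becomes a fibre variable; your formula $f_0(w,\eta)+S_t(q,\eta,\xi)$ hides this coupling. The added variables then naturally form $TM\oplus T^*M$ with the hyperbolic pairing, whose negative eigenbundle is $TM$. Such a lift generates $\Lambda_1$ on a twisted stabilization of $W$, which does not show that $\tau_k(W,\rho)\in\tau_k(\Lambda_1,\cF)$.

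The untwistedness has to come out of the lifting theorem itself. One way is to fragment the isotopy into pieces supported over coordinate balls, where the added variables can carry a constant form. This is what the cited formulation guarantees. With that version quoted, your Steps 3 and 4 become unnecessary. Note also that Step 4's claim that torsion is unchanged by ``the extra stabilizations'' holds only for untwisted ones.
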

 
For example, in the article \cite{AI} the first and second authors implicitly gave a concrete application of these ideas in the special case in which $\cF$ consists of oriented circle bundles $S^1 \to W \to \Sigma$ over a fixed orientable surface $\Sigma$, equipped with rank 1 unitary local systems which send the oriented fiber to a fixed $n$-root of unity $\neq 1$ for $\pm n$ the Euler number of $W$. In that specific case it is computed in \cite{AI} that for Legendrians in a certain class (called {\em mesh Legendrians}), all of which in particular admit generating functions on circle bundles, the degree 2 Legendrian higher torsion $\tau_1(\Lambda,\cF)$ of a mesh Legendrian $\Lambda \subset J^1 \Sigma$ in $H^2(\Sigma;\bR) \simeq \bR$ records the Euler number (up to sign)  of the circle bundle that was used to generate it. It follows from this computation that each mesh Legendrian admits a generating function on a circle bundle of Euler number $\pm n$ for a unique positive integer $n$, {\em even after stabilization of the circle bundle}. In particular it follows that this number is an invariant of the mesh Legendrian.

 In fact, in \cite{AI} a variant of Legendrian higher torsion called {\em Legendrian Turaev torsion} was defined for a general class of Legendrians called {\em Euler Legendrians}. For mesh Legendrians the Legendrian Turaev torsion allowed one to also recover the sign of this Euler number. The Legendrian Turaev torsion was then used in \cite{AI} to distinguish pairs of Legendrians which were formally Legendrian isotopic but not Legendrian isotopic, and in fact such that the isomorphism class of the Legendrian contact homology dga and microlocal sheaf category was unable to distinguish them (at least with traditional coefficients).  
 
Although different flavors of the invariant may be useful for different purposes, our main focus in the present article is a specific version of Legendrian higher torsion which we call {\em tube  torsion}. This version is particularly relevant to the nearby Lagrangian conjecture, the study of which is the primary motivation for this work.  Tube torsion is Legendrian higher torsion for the class $\cF$ of tube bundles in the sense of Waldahusen \cite{W82}. Tube bundles have gained direct relevance to the nearby Lagrangian conjecture after the existence theorem for (twisted) generating functions on tube bundles for nearby Lagrangians proved by Abouzaid, Courte, Guillermou and Kragh in \cite{ACGK}. When working with tube bundles we will omit the class $\cF$ from the notation, and as will be explained, the data of the unitary local system may also be disregarded. Although the precise definition of the invariant will be given in Definition \ref{def: tube torsion} once the notions of tube bundles and generating functions of tube type are properly formulated, we give for now the following provisional definition. 

 \begin{definition}\label{def: higher whitehead}
The {\em tube torsion} of a Legendrian $\Lambda \subset J^1M$ is the collection $\tau(\Lambda) \subset H^*(M;\bR)$ of higher torsion classes $\tau_k(W) \in H^{2k}(M;\bR)$ of tube bundles $W \to M$ on which $\Lambda$ admits a generating function. 
 \end{definition}
 
 \begin{remark} Since the class of tubes is {\em stable}, i.e. closed under stabilization, we do not have to add {\em up to stabilization} in the above definition.  \end{remark}
 
The main properties of tube torsion are summarized in the following result. We will denote by $p:KO^0(M) \to H^*(M;\bR)$ a suitably normalized version of the Pontryagin character adapted to the definition of higher torsion, see Appendix \ref{sec: higher torsion of bundles}. Let $\zeta(s)=\sum_{n=1}^\infty n^{-s}$ be the standard zeta function. For $E \to M$ a real vector bundle we denote $p(E)=\sum_k p_k(E)$ with $p_k(E) \in H^{4k}(M;\bR)$ given by 
$$p_k(E) = \frac{1}{2} (-1)^k \zeta(2k+1) ch_{2k}(E \otimes \bC) .$$
 
   \begin{theorem}\label{thm: main 1}
The tube torsion of a Legendrian $\Lambda \subset J^1M$ satisfies the following:
 \begin{enumerate}
 \item\label{item: Leg-inv} $\tau(\Lambda)$  is invariant under Legendrian isotopy of $\Lambda$.
   \item\label{item: coset}  $\tau(\Lambda) $ is a union of cosets of $p$, so we may view $\tau(\Lambda)$ as a subset of $\text{coker}(p)$. 
 \item\label{item: zero-sec triv} for the zero section $M \subset J^1M $ we have $\tau(\Lambda) =\text{im}(p)$, the trivial coset.
 \end{enumerate}
 \end{theorem}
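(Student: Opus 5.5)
Throughout, the torsion of a tube bundle $W \to M$ is computed through a fibrewise generalized Morse function. We use the framing principle and the additivity and stability properties of the higher torsion classes $\tau_k$ from Appendix \ref{sec: higher torsion of bundles}, together with the homotopy lifting property for generating functions stated earlier. The three items are proved separately.

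\emph{Item (\ref{item: Leg-inv}).} Let $\Lambda_t$ be a Legendrian isotopy and let $f_0 : W \to \bR$ be a generating function of tube type for $\Lambda_0$. By the homotopy lifting property for generating functions, after stabilizing we obtain a family $f_t$ of generating functions for $\Lambda_t$ on the stabilized bundle $W \times \bR^{4N}$, with the same behaviour at infinity. The class of tubes is stable, and $\tau_k$ is invariant under stabilization. Hence the stabilized bundle is again a tube bundle with the same torsion, and it carries a generating function for $\Lambda_1$. This gives $\tau(\Lambda_0) \subset \tau(\Lambda_1)$, and reversing the isotopy gives equality. The one point to check is that the lifting preserves the tube-type condition at infinity. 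I expect this to follow because the isotopy has compact support in $J^1M$, so the lifted family agrees with $f_0$ (stabilized) outside a compact set.

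\emph{Item (\ref{item: coset}).} Suppose $W \to M$ is a tube bundle carrying a generating function $f$ for $\Lambda$, and let $E \to M$ be a vector bundle. I would twist the stabilizing quadratic form by $E$. Form $W' = W \times_M (E \oplus E^\perp \oplus \bR^m)$, where $E \oplus E^\perp \cong \bR^n$ is trivial, and set
$$f'(w,u,v,x) = f(w) + |u|^2 - |v|^2 + \text{(a fixed trivial quadratic form in } x).$$
Fibrewise this is a stabilization, so $f'$ generates the same $\Lambda$. Also $W'$ is a tube bundle, since it is a tube times a disk bundle and the fibre is still a tube after suitable reorganization. By additivity of higher torsion for fibrewise products with a nondegenerate quadratic form, $\tau(W') = \tau(W) + c\,\tau(\text{negative eigenbundle})$. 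The framing principle computes the torsion of a fibrewise quadratic form with negative eigenbundle $E$ (relative to the trivial one) as the normalized Pontryagin character $p(E)$, possibly with a sign. Since $KO^0(M)$ is a group, this shows $\tau(\Lambda) + \text{im}(p) \subset \tau(\Lambda)$. This is the main obstacle: we must verify that the twisted form is admissible within the tube class, in particular that its behaviour at infinity matches the fixed tube-type normalization. The framing principle must also be invoked with the correct normalization, so that the output is exactly $p(E)$ and not some rescaling of it.

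\emph{Item (\ref{item: zero-sec triv}).} The zero section is generated by $f = 0$ plus a quadratic form on the trivial tube, whose torsion is $0$. Item (\ref{item: coset}) then gives $\text{im}(p) \subset \tau(M)$. For the reverse inclusion, take any tube bundle $W$ with a generating function $f$ for the zero section. Since the zero section has a single transverse critical sheet, $f$ has fibrewise exactly one critical point, which is nondegenerate, and its negative eigenbundle is some vector bundle $E$. The framing principle (equivalently, the fibrewise Morse-function formula for $\tau_k$) then gives $\tau(W) = \pm p(E) \in \text{im}(p)$. The subtle input here is that the fibre tube is contractible relative to its boundary, so the single critical point accounts for all of the torsion; the Whitehead torsion contributions of the fibre vanish for tubes.
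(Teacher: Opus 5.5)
Your proposal is correct and follows essentially the paper's own proof: item (1) uses the Eliashberg--Gromov homotopy lifting, which gives compactly supported perturbations of a standard stabilization and so leaves $T(f)$ unchanged, together with invariance of torsion under even-index constant stabilization; item (2) uses twisted stabilization by a fibered family of quadratic forms plus the framing principle; and item (3) uses the fact that a generating function for the zero section has constant fibrewise Morse theory, so only the $p(\gamma_f)$ term survives. The points you flag as obstacles are automatic in the paper's setup, and your remark that the tube is contractible relative to its boundary is inaccurate but not needed: what matters is only that the fibrewise Morse complex has a single generator. On the obstacles specifically, a direct sum of a tube-type function with a fibered nondegenerate quadratic form is again of tube type, and the shift is exactly $p(E)$ because $\gamma_{f\oplus q}=\gamma_f\oplus\pi^*E$ and $\pi_*^{\Sigma f}\pi^*$ is multiplication by the fibre Euler characteristic $+1$.
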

 
 \begin{remark}
For $W$ a tube bundle the lack of a unitary local system means we are drawing invariants from $K(\bR)$ rather than $K(\bC)$ and this forces the odd classes $\tau_{2k+1}(W)$ to be zero. So the only possible nontrivial classes are the even classes $\tau_{2k}(W) \in H^{4k}(M;\bR)$ and hence the tube torsion of a Legendrian $\Lambda \subset J^1M$ is a disjoint union $\tau(\Lambda) = \coprod_k \tau_{2k}(\Lambda)$ with $\tau_{2k}(\Lambda) \subset H^{4k}(M;\bR)$. 
 \end{remark}

Note that by definition, we have $\tau(\Lambda) =\varnothing$ if $\Lambda$ does not admit a generating function on any tube bundle. For example, since the fiberwise Morse theory of a tube bundle in the sense considered in the present article has Euler characteristic $\pm 1$  we will certainly have $\tau(\Lambda)=\varnothing$ if the projection $\Lambda \to M$ does not have degree $\pm 1$. 
When $\tau(\Lambda)=\varnothing$ we say that the tube torsion is {\em not defined}. 

So for which Legendrians  is tube torsion defined? A particularly elusive class of Legendrians is given by the Legendrian lifts of nearby Lagrangians. We recall that for $L$ and $M$ closed, connected, smooth $n$-dimensional manifolds, a nearby Lagrangian is an exact Lagrangian embedding $L \subset T^*M$. 
A nearby Lagrangian $L \subset T^*M$ admits a generating function on some tube bundle if and only if the stable Gauss map $L \to U/O$ is nullhomotopic \cite{ACGK}. Therefore the tube torsion of the Legendrian lift $\Lambda \subset J^1 M$ of any nearby Lagrangian $L \subset T^*M$ with stably trivial Gauss map is defined, and we denote it $\tau(L) \subset H^*(M;\bR)$. 
 
 Furthermore, in \cite{ACGK} a procedure for producing a generating function on a tube bundle for a nearby Lagrangian $L \subset T^*M$ is given once the input of a nullhomotopy of the stable Gauss map $L \to U/O$ is fixed, using the {\em doubling trick} which Guillermou introduced in \cite{Gui12}. We do not know whether any generating function on a tube bundle for a nearby Lagrangian is given by this procedure. We say that a generating function for $L$ which does arise by this procedure is {\em natural} and denote by $\nu(L)  \subset \tau(\Lambda) $ the subset consisting of those higher torsion classes in $\tau(\Lambda)$ of tube bundles $W \to M$ where the generating function for $L$ on $W$ is natural. We call $\nu(L)$  the {\em nearby Lagrangian torsion} of $L$. It satisfies the following properties:
 
 
  \begin{theorem}\label{thm: main 2} Let  $L \subset T^*M$ be a nearby Lagrangian with trivial stable Gauss map. The nearby Lagrangian torsion of $L$ satisfies the following:
  \begin{enumerate}
   \item $\nu(L)$ is invariant under Hamiltonian isotopies of $L$. 
  \item $\nu(L)$ is a single coset of $p$, so we get a well-defined element $\nu(L)  \in \text{coker}(p)$.
  \item for the zero section we have $\nu(M)=\tau(M)=\text{im}(p)$.
  \end{enumerate}
 \end{theorem}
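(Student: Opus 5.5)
We need to prove Theorem \ref{thm: main 2}. The plan is to track how the natural generating function depends on its inputs, and then invoke Theorem \ref{thm: main 1}.

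\textbf{The construction and its inputs.} The doubling construction of \cite{ACGK} takes as input a nullhomotopy $h$ of the stable Gauss map $L \to U/O$. From $L$ and $h$ it builds a tube bundle $W_h \to M$ together with a generating function $f_h : W_h \to \bR$ of tube type for the Legendrian lift $\Lambda$. The first task is to understand how the higher torsion $\tau(W_h)$ depends on $h$ and on the auxiliary choices of the construction: the sheaf quantization, the cutoffs in the doubling trick, and so on.

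\textbf{Step 1: auxiliary choices.} These range over contractible (or at least connected) spaces. A path of such choices yields a path of tube bundles over $M \times [0,1]$. Higher torsion is invariant under fibered diffeomorphism and stabilization, and a bundle over $M \times [0,1]$ has its two ends isomorphic. So $\tau(W_h)$ depends only on the homotopy class of $h$.

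\textbf{Step 2: changing the nullhomotopy.} Two nullhomotopies $h, h'$ differ by a loop in $U/O$ based at the constant map, that is, by a class in $[L, \Omega(U/O)]$. By Bott periodicity $\Omega(U/O) \simeq \bZ \times BO$, so this is a class in $KO^0(L)$. Since $\Lambda$ admits a tube-type generating function, $L \to M$ is a homotopy equivalence (by \cite{ACGK}), and we may regard the class as $\xi \in KO^0(M)$.

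The key claim is
$$\tau(W_{h'}) = \tau(W_h) + p(\xi).$$
The reason is that changing $h$ by $\xi$ twists the quadratic-at-infinity part of the doubled generating function by the vector bundle $\xi$. In effect, $W_{h'}$ is obtained from $W_h$ by fiberwise replacing a trivial stabilization by a stabilization by $\xi$. This is exactly the operation whose effect on higher torsion is computed in Appendix \ref{sec: higher torsion of bundles}, and it is the same mechanism that gives item (\ref{item: coset}) of Theorem \ref{thm: main 1}.

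Conversely, every $\xi \in KO^0(M)$ arises as such a change. Combining the two directions, $\nu(L) = \tau(W_h) + \text{im}(p)$ is a single coset, which is (2).

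\textbf{Step 3: Hamiltonian invariance (1).} Let $L_t$ be a Hamiltonian isotopy. The stable Gauss maps vary continuously, so a nullhomotopy $h_0$ for $L_0$ transports to a family $h_t$. Running the ACGK construction in families over $[0,1]$ produces a tube bundle over $M \times [0,1]$ whose ends are $W_{h_0}$ and $W_{h_1}$. This gives $\tau(W_{h_0}) = \tau(W_{h_1})$, so $\nu(L_0) \subset \nu(L_1)$. Reversing the isotopy gives the other inclusion.

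\textbf{Step 4: the zero section (3).} For $L = M$ the Gauss map is constant. Take $h$ constant; the doubling trick then returns, up to stabilization, the trivial tube $M \times D^N$ with a quadratic function, whose torsion is $0$. Therefore $\nu(M) = \text{im}(p)$. On the other hand $\nu(M) \subset \tau(M)$, and $\tau(M) = \text{im}(p)$ by item (\ref{item: zero-sec triv}) of Theorem \ref{thm: main 1}. So $\nu(M) = \tau(M) = \text{im}(p)$.

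\textbf{Main obstacle.} The hard part is the formula in Step 2. It requires identifying precisely how a change of nullhomotopy alters the tube bundle produced by the doubling trick, and matching that alteration with the Pontryagin-character normalization $p_k$. I would try first to show that the twist is, up to concordance, a fiberwise Whitehead-type modification by the vector bundle $\xi$. Then I would apply the additivity and transfer properties of Igusa–Klein torsion (\cite{I02,I05}) to compute its contribution as $\frac12(-1)^k\zeta(2k+1)\,ch_{2k}(\xi\otimes\bC)$.
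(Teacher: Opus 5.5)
Your proposal is correct and follows the paper's proof: (1) comes from running the doubling construction parametrically along the isotopy with a transported family of nullhomotopies (Proposition~\ref{prop: natural coset}); (2) comes from identifying a change of nullhomotopy by a map $L\to\Omega(U/O)\simeq\bZ\times BO$ with a twisted stabilization and then applying Proposition~\ref{prop: effect twisted stab}; and (3) comes from Theorem~\ref{thm: main 1}. In (3) you can drop the unproved claim about what the doubling trick outputs for the zero section (the trivial tube bundle would be $M\times T$, not $M\times D^N$), since a nonempty coset contained in $\tau(M)=\text{im}(p)$ must equal it; also, the homotopy equivalence $L\to M$ comes from \cite{A12}, not from the existence of a tube-type generating function, which does not imply it in general.
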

 
 \begin{remark} Although the tube torsion $\tau(\Lambda) $ of the Legendrian lift $\Lambda \subset J^1M$ of a nearby Lagrangian $L \subset T^*M$ is invariant under Legendrian isotopies of $\Lambda \subset J^1M$, the nearby Lagrangian torsion $\nu(L)$ is not obviously invariant under Legendrian isotopies of $\Lambda \subset J^1M$. Concretely, if $L_1, L_2 \subset T^*M$ are two nearby Lagrangians with Legendrian isotopic Legendrian lifts, then we do not know whether it must be the case that $\nu(L_1) = \nu(L_2)$, even though Theorem \ref{thm: main 1} guarantees that $\tau(L_1)=\tau(L_2)$. 
 \end{remark}
 

 \begin{example}   In \cite{ACGK} it was also shown that the stable Gauss map $L \to U/O$ of a nearby Lagrangian $L \subset T^*M$ is always trivial on homotopy groups. Since $L$ and $M$ are homotopy equivalent, this implies that $L \to U/O$ is always nullhomotopic when $M$ has the homotopy type of a sphere. So for nearby Lagrangian homotopy spheres, generating functions on tube bundles always exist and therefore the tube torsion and nearby Lagrangian torsion of a nearby Lagrangian homotopy sphere are always defined. Both invariants are automatically trivial for degree reasons unless the dimension of $M$ is of the form $n=4k$. For concreteness, say $M=S^{4k}$ is the standard sphere (though the discussion is similar if $M$ is any exotic sphere of dimension divisible by 4). The only possible nontrivial higher torsion class of a tube bundle $W \to S^{4k}$ is $\tau_{2k}(W) \in H^{4k}(S^{4k};\bR) \simeq \bR$. Hence for a nearby Lagrangian homotopy sphere $\Sigma \subset T^*S^{4k}$ we have a distinguished element $$\nu(\Sigma) \in \text{coker}\left(p:KO^0(S^{4k}) \to H^{4k}(S^{4k};\bR)  \right).$$
Alternatively, we may view
  $$\nu(\Sigma) \in \text{coker}\left( \bZ \xrightarrow{\lambda_k} \bR \right) = \bR / \lambda_k \bZ$$ where $\lambda_k=p(E) \cdot [S^{4k}]  \in \bR$ for $E \to S^{4k}$ the generator of $KO^0(S^{4k}) \simeq \bZ$. So the nearby Lagrangian torsion of any nearby Lagrangian homotopy sphere $\Sigma \subset T^*S^{4k}$ is well-defined and it is just a real number modulo $\lambda_k \bZ$. 
   \end{example}

From  \cite{AD22} it is known that after a Hamiltonian isotopy we may assume that the front of any nearby Lagrangian homotopy sphere $\Sigma \subset T^*S^{4k}$ only has cusp type singularities. When the front of $\Sigma$ only has cusp type singularities, any generating function $f:W \to \bR$ for $\Sigma$ will have moderate singularities, i.e. Morse or Morse birth/death. See Figure \ref{Figure1905} below.

When a fibered function of tube type $f$  has moderate singularities, from the family of fiberwise Morse chain complexes $C_*(f_m)$, $m \in S^{4k}$, one may construct a homotopy class of map 
$$ M \to \text{Wh}^h(\bC,1) $$
and from this one may extract a cohomology class $\tau_{2k}(f) \in H^{2k}(M;\bR)$ as reviewed in Appendix \ref{sec: higher torsion of bundles}.

  \begin{theorem}\label{thm: compute} For a nearby Lagrangian homotopy sphere $\Sigma \subset T^*S^{4k}$, after a suitable Hamiltonian isotopy of $\Sigma$ if necessary to ensure the front only has cusp singularities, we have:
  \begin{enumerate} 
  \item The tube torsion $\tau(\Sigma) \subset \bR/\lambda_k \bZ$ is given by the collection of classes $\tau_{2k}(f) \in \bR/\lambda_k \bZ$,  where $f$ ranges over all possible generating functions of tube type $f:W \to \bR$ for $\Sigma$. 
  \item The nearby Lagrangian torsion $\nu_{2k}(\Sigma) \in \bR/\lambda_k \bZ$ is given by the class $\tau_{2k}(f) \in  \bR/\lambda_k \bZ$  for  any given natural generating function of tube type $f:W \to \bR$ for $\Sigma$.
\end{enumerate}
  \end{theorem}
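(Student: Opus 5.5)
Both parts follow once we know that, for a generating function of tube type $f:W\to\bR$ with moderate singularities, the class $\tau_{2k}(f)$ built from the family of fiberwise Morse complexes $C_*(f_m)$ agrees with the higher torsion class $\tau_{2k}(W)$ of the underlying tube bundle. Granting this, part (1) is a rewriting of Definition \ref{def: higher whitehead}. By Theorem \ref{thm: main 1}(\ref{item: coset}), $\tau(\Sigma)$ is a union of cosets of $p$; in $\bR/\lambda_k\bZ$ these cosets become points. So $\tau(\Sigma)$ is exactly the set of values $\tau_{2k}(W)=\tau_{2k}(f)$ as $f$ ranges over generating functions of tube type. Part (2) follows in the same way from Theorem \ref{thm: main 2}. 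There $\nu(\Sigma)$ is a single coset, so any single natural generating function computes it. By Hamiltonian invariance (Theorem \ref{thm: main 2}(1)), we may first arrange, via \cite{AD22}, that the front has only cusp singularities.

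Step one is the reduction to moderate singularities. If the front of $\Sigma$ has only cusp singularities, then every generating function $f$ for $\Sigma$ has fibrewise critical points that are Morse or Morse birth/death. This is because the critical locus of $f$ is identified with $\Sigma$, and the fibrewise singularity type of $f$ at a critical point matches the front singularity of $\Sigma$ there. A cusp of the front corresponds to an $A_2$ singularity, which is exactly a birth/death point. Outside the critical set nothing needs to be checked. In the tube setting the behaviour at infinity is fixed and standard, so no critical points escape and the family is admissible for Igusa's framed-function machinery. Since every class in $\tau(\Sigma)$ arises from some generating function $f$, this step applies to all of them.

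Step two is the identification $\tau_{2k}(f)=\tau_{2k}(W)$, and I expect it to be the main obstacle. I would invoke the framework reviewed in Appendix \ref{sec: higher torsion of bundles}. There, a fibrewise generalized Morse function on a bundle with unipotent (here acyclic relative to the tube ends) fibre homology gives a map to $\text{Wh}^h(\bC,1)$, and the pullback of the universal Igusa--Klein class is the higher torsion of the bundle. This is the framing principle, applied here with trivial local system, so only $K(\bR)$ appears and only $\tau_{2k}$ survives. Two subtleties must be handled. First, the framing principle asserts independence of the choice of fibrewise function only up to a correction given by the Pontryagin character of the negative eigenspace bundle at critical points, which is exactly a $p$-term. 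Working modulo $\lambda_k\bZ$, that is in $\text{coker}(p)$, absorbs this correction. Second, the fibres of a tube bundle are noncompact or have boundary, so I would check that the relative version, using the fixed behaviour at infinity, still yields a well-defined map and agrees with the definition used for $\tau_{2k}(W)$. I would check this after stabilization, which changes neither side.

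Finally, I would check that the normalization of $p$ and $\lambda_k$ matches the normalization of the framing-principle correction. Then both statements hold in $\bR/\lambda_k\bZ$.
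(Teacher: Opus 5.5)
There is a genuine gap in your Step two, at the sentence asserting that the framing-principle correction ``is exactly a $p$-term''. The correction in Theorem \ref{thm: framing} is $\pi^{\Sigma f}_*p_k(\gamma_f)$, where $\gamma_f$ is the stable bundle over the fibrewise critical locus $\Sigma_f$, not over $S^{4k}$. A push-forward of a Pontryagin-character class from $\Sigma_f$ has no a priori reason to lie in $\text{im}(p_k)=\lambda_k\bZ$, and passing to $\bR/\lambda_k\bZ$ does not by itself absorb it. This is where the nearby-Lagrangian hypothesis must enter. Your argument never uses it beyond producing a cusp-only front, so as written it would apply to any Legendrian with cusp-only front. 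The missing idea is Proposition \ref{prop: torsion for special Leg}. The locus $\Sigma_f$ is canonically diffeomorphic to $\Sigma$, and the projection $\Sigma\to S^{4k}$ is a homotopy equivalence \cite{A12}. Hence $\gamma_f\cong(\pi^{\Sigma f})^*E$ for some stable bundle $E\to S^{4k}$. Then $\pi^{\Sigma f}_*(\pi^{\Sigma f})^*p_k(E)=\chi\cdot p_k(E)=p_k(E)$, since the fibrewise Morse complex has Euler characteristic $+1$ (even index). Only after this is the correction a $p$-term.

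Second, the identification you aim for is off by a sign. The fibrewise Morse complex of $f_m$ on $\bR^{4N}$ computes the homology of the pair $(D^{4N},T_m)$. So $\tau_{2k}(f)$ plus the correction computes the torsion of the bundle pair $(X,W)$ with $X=M\times D^{4N}$, not the torsion of $W$. Since $X$ is trivial, Theorem \ref{thm: relative} gives $\tau(W)=-\tau(X,W)$. Equivalently, $-\tau_{2k}(W)=\tau_{2k}(f)+\pi^{\Sigma f}_*p_k(\gamma_f)$, which is Proposition \ref{prop: stab framing}. The check you propose, that the relative version agrees with $\tau_{2k}(W)$, would therefore fail. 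The statement holds because the precise Definition \ref{def: tube torsion} takes tube torsion to be the negatives $-\tau_k(W)$, not the classes $\tau_k(W)$ of the provisional Definition \ref{def: higher whitehead} you cite. Every tube bundle over $S^{4k}$ is orientable, so no generating functions are excluded from the count. With these two corrections, your outline of parts (1) and (2), including Hamiltonian invariance and the single-coset argument for $\nu$, goes through as in the paper.
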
 

  {
\begin{figure}[htbp]
\begin{center}
\begin{tikzpicture}[scale=.7]
\begin{scope}[xshift=-5.5cm]
\draw[thick,-{stealth}] (-1,0)--(3,0);
\draw[thick,-{stealth}] (0,-1)--(0,3);
\draw (0.1,3) node[right]{$\bR$};
\draw (3,0) node[right]{$M$};
\draw (2,2.8) node{$\pi_F(\Lambda)$};
\draw[thick] (1,1.5)..controls (2,1.5) and (2.5,1.8)..(3,2.2);
\draw[thick] (1,1.5)..controls (2,1.5) and (2.5,1.2)..(3,0.8);
\end{scope}

\begin{scope}
\draw[thick] (-1,0)--(3,0);
\draw[thick] (0,-1)--(0,3);
\draw (0.1,3) node[right]{$\bR^k$};
\draw (3,0) node[right]{$M$};
\draw[thick] (1,1.5)..controls (2,1.5) and (2.5,1.8)..(3,2.2);
\draw[thick] (1,1.5)..controls (2,1.5) and (2.5,1.2)..(3,0.8);
\draw[thick] (2,2.5)..controls (2,1.7) and (2.2,.85)..(2.4,1.5)..controls (2.6,2.55) and (2.8,1)..(2.8,.5);
\draw (1,-.15)--(1,.15);
\draw (1,-.15) node[below]{$p$};
\end{scope}

\begin{scope}[xshift=5.5cm]
\draw[thick] (-1,0)--(3,0);
\draw[thick] (0,-2)--(0,2);
\foreach \x in {.4,.7,1,1.3,1.6,1.9,2.2,2.5}
\draw (\x,-2)--(\x,2);
\draw (1,-2.3)--(1,2.3);
\draw (1,2.3)node[above]{$T_p^\ast M$};
\draw (3,0) node[right]{$M$};
\draw (3.5,2.8) node[right]{$T^\ast M$};
\draw[thick] (1,0)..controls (1,1) and (2.5,1.5)..(3,1.6);
\draw (3,1.6) node[right]{$L$};
\draw[thick] (1,0)..controls (1,-1) and (2.5,-1.5)..(3,-1.6);
\end{scope}

\end{tikzpicture}
\caption{The front projection $\pi_F(\Lambda) \subset M \times \bR$ of a Legendrian $\Lambda \subset J^1M$ near a cuspidal singularities is shown on the left. The realization of $\pi_F(\Lambda)$ as the Cerf diagram of a family of functions with moderate singularities is shown in middle. The corresponding Lagrangian $L \subset T^\ast M$ has moderate tangencies with respect to the foliation by fibers $T_q^*M$, $q \in M$, as shown on the right.}
\label{Figure1905}
\end{center}
\end{figure}
}
  
  In particular, when the front of a nearby Lagrangian homotopy sphere $\Sigma \subset T^*S^{4k}$ only has cusp type singularities, it is possible to compute tube torsion and nearby Lagrangian torsion of $\Sigma$ from the parametrized Morse theory of generating functions for $\Sigma$. A more precise version of this result will be stated and proved in Section \ref{sec: nearby} below.

  Next, we introduce some terminology to distinguish from the case in which nearby Lagrangian torsion isn't defined.

\begin{definition} We say that a Legendrian $\Lambda \subset J^1 M$ has {\em trivial tube torsion} when $\tau(\Lambda) =\text{im}(p)$. We say that a nearby Lagrangian $L \subset T^*M$ has {\em trivial nearby Lagrangian torsion} when $\nu(L)=\text{im}(p)$. \end{definition}

\begin{remark} If a nearby Lagrangian $L \subset T^*M$ has trivial tube torsion, then it automatically has trivial nearby Lagrangian torsion. \end{remark}

We may formulate the resulting obstruction to Legendrian and Hamiltonian isotopy to the zero section as follows:
 
 \begin{corollary} If a Legendrian $\Lambda \subset J^1M$ is Legendrian isotopic to the zero section, then it has trivial tube torsion. In particular, if a nearby Lagrangian $L \subset T^*M$ is Hamiltonian isotopic to the zero section, then it has trivial nearby Lagrangian torsion. \end{corollary}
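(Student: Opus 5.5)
The corollary should follow formally from Theorem \ref{thm: main 1} and Theorem \ref{thm: main 2}, so the plan is to chain together the invariance and normalization statements already established there, with no new geometric input.

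\emph{First assertion.} Suppose $\Lambda \subset J^1M$ is Legendrian isotopic to the zero section $M \subset J^1M$. By item (\ref{item: Leg-inv}) of Theorem \ref{thm: main 1}, tube torsion is invariant under Legendrian isotopy, so $\tau(\Lambda)=\tau(M)$ as subsets of $H^*(M;\bR)$. By item (\ref{item: zero-sec triv}) of Theorem \ref{thm: main 1}, $\tau(M)=\text{im}(p)$. Hence $\tau(\Lambda)=\text{im}(p)$, which is exactly the definition of trivial tube torsion. The only point to check is that the invariance statement is an equality of subsets, not just an equality of images in $\text{coker}(p)$. If it were only the latter, item (\ref{item: coset}) would still give the conclusion. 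Both sets are unions of cosets of $p$ and have the same image in $\text{coker}(p)$, namely the single trivial coset, so both equal $\text{im}(p)$.

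\emph{Second assertion.} Let $L \subset T^*M$ be Hamiltonian isotopic to the zero section. We must first check that $\nu(L)$ is defined, i.e.\ that the stable Gauss map $L\to U/O$ is trivial. The Hamiltonian isotopy $\phi_t$ gives a homotopy of Lagrangian embeddings from $M$ to $L$. Under the identification $L\cong M$ via $\phi_1$, the stable Gauss map of $L$ is therefore homotopic to that of the zero section, which is constant. So $\nu(L)$ is defined.

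By item (1) of Theorem \ref{thm: main 2}, $\nu(L)=\nu(M)$, and by item (3), $\nu(M)=\text{im}(p)$. Hence $L$ has trivial nearby Lagrangian torsion.

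An alternative route goes through the first assertion. A Hamiltonian isotopy of exact Lagrangians lifts to a Legendrian isotopy of the Legendrian lifts, so $\tau(L)=\text{im}(p)$. Since $\nu(L)\subset\tau(L)$ is a single coset of $p$, it must be $\text{im}(p)$, as in the remark preceding the corollary.

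\emph{Main obstacle.} The only step requiring any care is that $\nu(L)$ is defined, i.e.\ the Gauss map triviality transported along the isotopy. This is a standard homotopy-invariance argument. Everything else is a direct citation of Theorems \ref{thm: main 1} and \ref{thm: main 2}.
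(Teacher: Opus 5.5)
Your proof is correct and follows the paper's implicit derivation: the first claim is Theorem~\ref{thm: main 1}, items (\ref{item: Leg-inv}) and (\ref{item: zero-sec triv}), and the second follows either from items (1) and (3) of Theorem~\ref{thm: main 2}, or, as in your alternative route, from lifting the Hamiltonian isotopy to a Legendrian isotopy and applying the remark just before the corollary that trivial tube torsion forces trivial nearby Lagrangian torsion. Your check that the stable Gauss map of $L$ is trivial, so that $\nu(L)$ is defined, covers a point the paper leaves unstated.
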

 
 The famous nearby Lagrangian conjecture states that any nearby Lagrangian $L \subset T^*M$ is Hamiltonian isotopic to the zero section. The following question is therefore natural.
  
 \begin{question}\label{ques: nearby Lag}
Must a nearby Lagrangian $L \subset T^*M$ have trivial nearby Lagrangian torsion? 
\end{question}

An affirmative answer to this question would provide further evidence towards the nearby Lagrangian conjecture, whereas a negative answer would disprove it. Note that according to the nearby Lagrangian conjecture the Legendrian lift $\Lambda \subset J^1M$ of a nearby Lagrangian $L \subset T^*M$ would in particular  be Legendrian isotopic to $M$, though even this weaker form of the conjecture is not known. It seems conceivable that there could exist a nearby Lagrangian with trivial nearby Lagrangian torsion but nontrivial tube torsion, which would also disprove the nearby Lagrangian conjecture. 

\begin{remark}
The invariant $\nu(L)$ is an invariant of a {\em single} nearby Lagrangian $L \subset T^*M$. It is a parametrized torsion invariant of a family of acyclic chain complexes associated to $L$. The space of parameters is $M$, and for a single parameter $m \in M$ the relevant chain complex is (at least conjecturally) $CF^*(T_mM,L)$, see Section \ref{sec: hol}. However, we take the generating function viewpoint in the present article and use the fiberwise Morse complex $C_*(f_m)$ of a natural generating function $f$ for $L$ instead. The work by Courte and Porcelli \cite{CP25} gives a vanishing result for the parametrized higher torsion of a {\em family} of nearby Lagrangians $L_z \subset T^*M$,$z \in Z$, where for a single parameter the relevant chain complex is $CF^*(M,L_z)$ for $M \subset T^*M$ the zero section, though \cite{CP25} also takes the viewpoint of generating functions. The work \cite{CP25} is closer to a parametric generalization of the result \cite{AK} of vanishing of Whitehead torsion for a single nearby Lagrangian $L \subset T^*M$, which concerns $CF^*(M,L)$. Indeed, the invariants $\tau(L)$ and $\nu(L)$ have no {\em non-parametric} analogue; the fiber of a tube bundle is simply connected so its Whitehead group is trivial. It's possible the two viewpoints may be reconciled by some assembly type map.  
\end{remark}

In the broader class of Legendrians which are not necessarily lifts of nearby Lagrangians, we show that the theory of tube torsion is nontrivial in that Legendrians with nontrivial tube torsion abound. In fact, we establish the existence of Legendrians which have nontrivial tube torsion and which are diffeomorphic to the base, so that the conjectural triviality of tube torsion for nearby Lagrangians would in any case not follow from known constraints on the topology of nearby Lagrangians.

To formulate the precise statement, we will use the viewpoint of Waldhausen \cite{W82} where a fibration sequence $\cH_\infty \to \cT_\infty \to BG$ is constructed. Here $\cH_\infty$ is the space of stable h-cobordisms of a point (which classifies bundles of h-cobordisms of a disk up to stable equivalence), $\cT_\infty$ is the stable space of tubes (which classifies tube bundles up to stable equivalence), and $BG$ is the classifying space for stable spherical fibrations, so $G=\lim_n G_n$ for $G_n$ the space of self homotopy-equivalences of $S^n$. In particular, this means given a bundle $H \to V \to M$ of h-cobordisms of a disk, one may attach $V$ to a trivial tube bundle $M \times T$, where $T$ is the standard tube, to obtain a tube bundle $T \to W \to M$. The tube bundle $W$ has the same higher torsion as $V$. 

\begin{theorem}\label{thm: existence of example} 
Let $H \to V \to M$ be a bundle of $h$-cobordisms of a disk and let $T \to W \to M$ the resulting tube bundle. There exists a Legendrian $\Lambda \subset J^1M$ which admits a generating function on $W$ and whose projection to the base $\Lambda \to M$ is homotopic to a diffeomorphism. In particular the higher torsion classes $\tau_k(V) \in H^{2k}(M;\bR)$ of $V$ are contained in the tube torsion $\tau(\Lambda) \subset H^*(M;\bR)$ of $\Lambda$.
\end{theorem}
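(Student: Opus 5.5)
The plan is to build a fibered function $f: W \to \bR$ of tube type whose fiberwise critical set is a section-like submanifold, and then to take $\Lambda$ to be the Legendrian it generates. The starting point is a fiberwise Morse function on the $h$-cobordism bundle. Choose a fiberwise function $g: V \to [0,1]$ with $g^{-1}(0)$ equal to the bottom face $D \times M$ and $g^{-1}(1)$ equal to the top face; it may have fiberwise critical points only in the interior. Since $H$ is an $h$-cobordism of a disk, the fiberwise Morse complexes of $g$ (with generalized Morse, i.e.\ birth/death, singularities allowed over the parameter space) are acyclic. By Igusa's framed function theory and the Hatcher--Igusa two-index theorem, after fiberwise stabilization we may take $g$ to have only critical points of two adjacent indices $i, i+1$ with $i$ large, in cancelling pairs, together with birth/death points.

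Next we glue in the standard tube. The standard tube $T$ carries a function with exactly one critical point, of a fixed index, and it has the prescribed behavior at its boundary/infinity. Attaching $V$ to $M \times T$ along the disk face, we extend $g$ by this function to a fibered function $f$ on $W$ of tube type. Its fiberwise critical locus is $\Sigma_f = (M \times \{c\}) \sqcup \Sigma_g$, where $c$ is the tube's critical point, and $\Lambda = \{(m, \partial_m f, f) : (m,w) \in \Sigma_f\}$. To get an embedded Legendrian, we arrange genericity of the critical values. By adding fiberwise small generic functions pulled back from $M$, or by Whitney-type perturbations, the Legendrian lift is embedded for $\dim M$ small relative to the stabilization; alternatively we use that self-intersections of a generic Legendrian front can be resolved in $J^1M$ because the Reeb chords are isolated and can be removed by perturbing $f$ in the $z$-direction, separating the critical values of the two components.

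The key point is the topology of $\Sigma_f \to M$. The component $M \times \{c\}$ maps diffeomorphically to $M$. For the component $\Sigma_g$, we use the two-index normal form: $\Sigma_g$ is a union of closed manifolds, each of which is a double cover of the region bounded by the birth/death locus, glued along folds. These components are the main obstacle, since they must be eliminated or made to not disturb the homotopy type. I would try two approaches. First, use Igusa's result that after stabilization one may realize $g$ with birth/death loci arranged as ``dumbbells'' or spheres, i.e.\ each component of $\Sigma_g$ is a sphere bundle over a disk, null-homotopic relative to $M$. Second, and preferably, merge everything by a Legendrian connect-sum with the sheet $M \times \{c\}$: performing fiberwise handle slides and a birth near the tube critical point connects each component of $\Sigma_g$ to the main sheet. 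Then $\Lambda$ is diffeomorphic to $M \# (\text{spheres}) \cong M$, or at least $\Lambda \to M$ becomes a degree-one map that is a homotopy equivalence. Upgrading this to ``homotopic to a diffeomorphism'' likely requires the components of $\Sigma_g$ to be standard spheres $S^{n}$, which follows from the local model of a birth/death cusp $\Sigma$ over a disk.

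The final claim then follows immediately. $f$ is a generating function for $\Lambda$ on $W$, so by definition $\tau_k(W) \in \tau(\Lambda)$, and $\tau_k(W) = \tau_k(V)$ by additivity of higher torsion, since the standard tube piece $M \times T$ contributes trivially.
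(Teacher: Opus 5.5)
Your overall architecture matches the paper's. You take a function on $V$ with the two faces as regular levels and glue it to the standard function on $M\times T$, so that the fiberwise critical set is $\Sigma_g$ plus a section. You then absorb the extra components into the graphical sheet by connected sum. However, the step you flag as the main obstacle is not resolved, and the justification you give for it is false. A framed or two-index function $g$ gives no control over the global topology of $\Sigma_g$: it is just some closed $n$-manifold folding over $M$. The local birth/death model says nothing about whether its components are standard spheres, let alone spheres with nullhomotopic projection to $M$. Without that, $M\#(\text{components of }\Sigma_g)$ need not be diffeomorphic to $M$, and the projection need not be homotopic to a diffeomorphism.

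The missing ingredient is the Eliashberg--Mishachev wrinkling theorem (Theorem~\ref{thm: wrinkling}), applied to $V$ relative to $A=\partial V$. You must first check its hypothesis that $d_vg$ near $\partial V$ extends to a nowhere-vanishing section of $\text{Vert}^*$ (as in \cite[Prop.~2.5]{EM00}). The theorem outputs a fibered wrinkled function. Its singular locus is by construction a disjoint union of standard spheres $S_i$, each with an equator of cusps and lying over a small ball, hence with nullhomotopic projection. Your ``first approach'' gestures toward this, but the two-index theorem is not the relevant tool.

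The merging step also needs more than you describe. Fiberwise handle slides do not change $\Sigma$ at all. Moreover, a wrinkle $S_i$ (a cusp edge between indices $i$ and $i+1$) cannot be connect-summed into the graphical sheet, which is a single Morse sheet of the tube index with no cusps, unless the indices match. The paper handles this in two moves:
\begin{enumerate}
\item It first Legendrian-isotopes the graphical component, stacking Reidemeister~I zig-zags into ``pagodas'' to create cusp edges of every Maslov index. This isotopy is covered by homotopy lifting after stabilization.
\item It then performs a surgery of corank~1 singularities joining each wrinkle's cusp locus to a cusp locus of the same type. This surgery is realized by a homotopy of tube-type functions, so the tube bundle is unchanged.
\end{enumerate}

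Two smaller points. First, a function on the compact tube bundle $W\subset M\times S^{N-1}$ is not yet a generating function of tube type on $M\times\bR^N$ in the sense of the definition. The paper invokes \cite[Prop.~3.21]{ACGK} to get $f$ with $T(f)=W$ generating the same Legendrian. Second, your embeddedness discussion conflates double points with Reeb chords. Double points are removed by a generic perturbation, a codimension count that has nothing to do with the stabilization dimension. Reeb chords are harmless and need not be removed.
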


The higher torsion of tube bundles arising from h-cobordism bundles as in Theorem \ref{thm: existence of example} is in general not contained in the image of the (normalized) Pontryagin character, and hence this result shows that Legendrians with nontrivial tube torsion exist in abundance. Unfortunately, we do not know how to visualize a single explicit example of a Legendrian with nontrivial tube torsion. The simplest possible example would live in $J^1S^4$.

 \begin{problem}\label{pros: exhibit}
Describe explicitly, or draw the front of a Legendrian in $J^1S^4$ with nontrivial tube torsion.
\end{problem}

Although we do not know whether nearby Lagrangians must always have trivial tube torsion, or even whether they must always have trivial nearby Lagrangian torsion, in the upcoming work \cite{AIS} we will prove the triviality of nearby Lagrangian torsion under the additional assumption that the front admits a ruling satisfying certain technical conditions, which we call a {\em restricted ruling}.

Finally we note that if a nearby Lagrangian $L \subset T^*M$ has a stable Gauss map $L \to U/O$ which is not nullhomotopic, then it cannot admit any generating function, so that tube torsion is not defined. However, such an $L$ would already be a counterexample to the nearby Lagrangian conjecture, since a Hamiltonian isotopy between $L$ and the zero section would in particular yield a nullhomotopy of the stable Gauss map.

\subsection{Structure of the article}\label{sec: structure}
In Section \ref{sec: Reidemeister} we define a general class of invariants which we collectively call {\em Legendrian higher torsion}. In Section \ref{sec: tube} we specialize to a specific version tailored to the nearby Lagrangian conjecture, which we call {\em tube torsion}, and prove the existence of Legendrians which are diffeomorphic to the base and which have nontrivial tube torsion. In Section \ref{sec: nearby} we identify a distinguished coset of the tube torsion of a nearby Lagrangian, which we call {\em nearby Lagrangian torsion}, and prove that it is a Hamiltonian isotopy invariant. 
In Appendix \ref{sec: moderate singularities} we give some background on functions with moderate singularities and in  Appendix \ref{sec: higher torsion of bundles} we give some background on the definition of higher torsion invariants of fiber bundles.

\subsection{Acknowledgements}
We are grateful to Sander Kupers for private communication related to the numerical comparison of regulators and higher torsion. The first author is grateful for the support of the NSF under grant DMS-2203455, the Simons Foundation and the K-Theory foundation.  The second author is grateful for the support of the Simons Foundation, grant \# 686616. The third author is grateful for the support of the Simons Foundation, grant \# 019208.

\section{Legendrian higher torsion}\label{sec: Reidemeister}

In this section we present a general construction of Legendrian invariants using higher torsion classes, which are certain characteristic classes of fiber bundles of smooth manifolds arising from algebraic K-theory. 

\subsection{Fiber bundles and fibrations at infinity}
Let $M$ be a closed, connected smooth manifold. We consider fiber bundles of smooth manifolds $F \to W \to M$ and functions $f:W \to \bR$ on the total space, which we will often think of as families of functions $f_m:F_m \to \bR$ on the fiber $F_m \simeq F$, parametrized by $m \in M$. In the simplest case $F$ is a closed manifold. When $F$ is not a closed manifold we will have to add conditions to the possible functions $g:F \to \bR$ we consider. 
We use the terminology of \cite{EG98} in the following definition.

\begin{definition}
\label{def: Fibration at Infinity}
A function $f:F \to \bR$ is a  {\em fibration at infinity} if there exists a finite segment $[-a,a] \subset \bR$ and a compact subset $K \subset f^{-1}[-a,a] \subset F$ such that the restriction of $f$ to the following three subsets fibers them over their respective images:
\begin{enumerate}
\item $f^{-1}(-\infty,a] \to (-\infty,a]$
\item $f^{-1}[a,\infty) \to [a,\infty)$
\item $(f^{-1}[-a,a] ) \setminus K \to [-a,a]$
\end{enumerate}
\end{definition}

As noted in \cite[Section 0.2.1]{EG98}, a function $f:F \to \bR$ is a fibration at infinity if and only if $F$ admits a complete Riemannian metric with respect to which $\| df \| \geq \varepsilon>0$ outside of a compact subset.

\begin{example} The following are fibrations at infinity.
\begin{enumerate}
\item A non-degenerate quadratic form $q:\bR^N \to \bR$.
\item A function $f:\bR^N \to \bR$ of class $C^1$ which is 2-homogeneous, i.e. $f(\lambda v) = \lambda^2f(v)$, outside of a compact subset $K \subset \bR^N$ and such that the function $g:S^{N-1} \to \bR$ given by $g(v)=\lim_{\lambda \to +\infty} f(\lambda v)$ is a smooth function which has $0$ as a regular value. 
\item A function $f:\bR^N \to \bR$ which is a bounded $C^1$-distance from a function as in (2). 
\item The direct sum of two fibrations at infinity on the product domain.
\end{enumerate}
\end{example}

For a first contact with higher Reidemeister torsion it will be sufficient to consider, for $F$ a closed manifold, the class of fiber bundles $W \to M$ over $M$ whose fiber is diffeomorphic to $F$, together with their stabilizations. By the stabilization of a fiber bundle we will mean the fiberwise product with a trivial $\bR^{4N}$ bundle, which yields a new fiber bundle
$$ F \times \bR^{4N} \to W \times \bR^{4N} \to M.$$
On this bundle (with non-compact fibers) we distinguish the following class of functions.

\begin{definition}\label{def: class}
Given $M$ and $F$ closed, smooth manifolds and $F \to W \to M$ a fiber bundle, we denote by $\mathcal{S}(W)$ the class of all of functions $f:W \times \bR^{4N} \to \bR$ which are fibrations at infinity of the form
$$
f(w, x, y) = g(w) + \varepsilon(w,x,y) + \|x\|^2 - \|y\|^2, \qquad (w,x,y) \in W \times  \bR^{2N} \times \bR^{2N} 
$$
where $g:W \to \bR$ is a smooth function and $\varepsilon$ is a smooth function with compact support. Functions $f:W \times \bR^{4N} \to \bR$ in the class $\mathcal{S}(W)$ will be said to be {\em{admissible.}}
\end{definition}

\begin{remark} Note that if $f:W \times \bR^{4N} \to \bR$ is admissible then in particular with respect to any Riemannian metric on $W$ it remains a bounded $C^1$-distance from the standard fiberwise quadratic form: $$q(w, x,y) = \|x\|^2-\|y\|^2, \qquad (w,x,y) \in W \times  \bR^{2N} \times \bR^{2N}.$$ \end{remark}

\begin{remark}
If instead of starting with a fiber bundle of closed manifolds $F \to W \to M$ we started with a fiber bundle whose fiber $F$ has boundary (or is non-compact), then some fixed asymptotic behavior near the boundary (or at infinity) for the functions $g:W \to \bR$ as in the above definition should be specified. 
\end{remark}

\begin{warning} We should really write $\mathcal{S}(\pi)$ for $\pi:W \to M$ the fibration but we will abuse notation and use $\mathcal{S}(W)$  instead of $\mathcal{S}(\pi)$ throughout. \end{warning}

\subsection{The Legendrian invariant}\label{sec: Reid inv}



 Let us first consider the case where the fiber $F$ is closed.
 
 
 \begin{definition}
 A {\em torsion pair} $(W,\rho)$ consists of a fiber bundle $F \to W \to M$ and a unitary local system  $\rho : \pi_1 W \to U(1)$ such that $H_*(F;\bC^\rho)$ is a unipotent $\pi_1M$-module.
 \end{definition}
 
Here $H_*(F;\bC^\rho)$ denotes the homology of the fiber with the twisted complex coefficients given by $\rho|_{\pi_1F}$ and by {\em unipotent} we mean that the $\pi_1M$-module $H_*(F;\bC^\rho)$ admits a filtration by $\pi_1M$ submodules so that the action of $\pi_1M$ on the successive subquotients is trivial (a basepoint $b \in M$ should be chosen but this choice is immaterial). 
 
 \begin{warning}
We should really write $(\pi,\rho)$ for $\pi:W \to M$ the fibration but we will abuse notation and write $(W, \rho)$ instead of $(\pi, \rho)$ throughout. 
 \end{warning}
 
 Given a torsion pair $(W,\rho)$, one has its higher Reidemeister torsion, which consists of cohomology classes $\tau_k(W,\rho) \in H^{2k}(M;\bR)$, whose construction is reviewed in Appendix \ref{sec: higher torsion of bundles}. 
 
 By a {\em Legendrian submanifold} we will mean throughout a Legendrian embedding $\Lambda \hookrightarrow J^1M$ of a closed smooth manifold $\Lambda$, i.e. compact and without boundary. By an {\em immersed Legendrian} we will mean a Legendrian immersion $\Lambda \to J^1M$ of a closed smooth manifold $\Lambda$, though we will abuse notation and denote both embedded and immersed Legendrians by $\Lambda \subset J^1 M$. To make the connection between functions on fiber bundles over $M$ and Legendrians in $J^1M$ we need an additional condition on the functions we consider. The condition is generic, and is the standard definition of a {\em generating function} for a Legendrian in a 1-jet space. Recall that the {\em vertical differential} of a function $f:W \to \bR$ on the total space of a fiber bundle $\pi: W \to M$ is the restriction $d_vf$ of the usual differential $df:TW \to \bR$ to the vertical distribution $\ker(d \pi) \subset TW$. 

\begin{definition} A function $f:W \to \bR$ is said to be a {\em{generating function}} if the vertical differential $d_v f$ is transverse to zero. 
\end{definition}

If $f$ is a generating function, then $f$ determines a Legendrian immersion from the fiberwise singular locus $\Sigma_f = \{ d_v f = 0 \} \subset W$ into $J^1M$. Indeed, the condition that $d_v f \pitchfork 0$ means that the graph $\Gamma(df) \subset T^*W$ intersects transversely the coisotropic sumanifold $X \subset T^*W$ consisting of covectors which are zero on the fibers $T_mF$, $m \in M$. The symplectic reduction of the coisotropic submanifold $X$ is naturally identified with $T^*M$ and the image of the submanifold $X \cap \Gamma(df) =\Gamma(df)|_{\Sigma_f}$ under this reduction is an immersed exact Lagrangian submanifold of $T^*M$ whose Legendrian lift (via the values of $f$) is an immersed Legendrian submanifold $\Lambda \subset J^1M$.  

When moreover $\Lambda$ is an embedded Legendrian submanifold of $J^1M$ (which is the case generically), one says that the Legendrian submanifold  $\Lambda \subset J^1M$ is {\em generated} by $f$. 

Now fix a collection $\cF$ of torsion pairs $(W,\rho)$. Examples of reasonable collections to consider include but are not limited to:
\begin{enumerate}
\item The singleton consisting of a fixed $W$ and a fixed $\rho$.
\item The collection of all $(W,\rho)$ for a fixed $W$ but all possible $\rho$ for that $W$. 
\item The collection of all possible torsion pairs $(W,\rho)$ with fiber a fixed closed manifold $F$.
\end{enumerate}

\begin{definition}  Let  $\Lambda \subset J^1M$ be a Legendrian submanifold. The {\em{Legendrian higher torsion}} of $\Lambda$ in the class $\cF$ is the subset $\tau(\Lambda,\cF) \subset H^*(M;\bR)$ consisting of all higher Reidemeister torsion classes $\tau_k(W,\rho) \in H^{2k}(M;\bR)$ of torsion pairs $(W,\rho)$ in $\cF$ such that $\Lambda$ admits a generating function $f \in \mathcal{S}(W)$.
\end{definition}

\begin{theorem}
If $\Lambda_0,\Lambda_1 \subset J^1M$ are Legendrian isotopic, then $\tau(\Lambda_0,\cF)=\tau(\Lambda_1,\cF)$. 
\end{theorem}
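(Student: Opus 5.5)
By symmetry (reverse the isotopy) it is enough to prove $\tau(\Lambda_0,\cF)\subset\tau(\Lambda_1,\cF)$. So fix a torsion pair $(W,\rho)\in\cF$ and a generating function $f_0\in\mathcal{S}(W)$ for $\Lambda_0$, defined on $W\times\bR^{4N}$. The plan is to produce a generating function for $\Lambda_1$ on a stabilization of the same bundle $W$. Since $\tau_k(W,\rho)$ depends only on $(W,\rho)$, this gives $\tau_k(W,\rho)\in\tau(\Lambda_1,\cF)$.

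The key input is the homotopy lifting property for generating functions under Legendrian isotopy, in the form of Chekanov's theorem and its refinements (Théret, Eliashberg–Gromov \cite{EG98}). It says the following. Let $\Lambda_t$, $t\in[0,1]$, be a Legendrian isotopy. Let $f_0$ be a generating function on a bundle $E\to M$ which is a fibration at infinity, a bounded $C^1$-distance from a fixed fiberwise quadratic form. Then, after stabilizing by a further nondegenerate quadratic form, there is a continuous family $f_t$ of generating functions for $\Lambda_t$, with $f_t-f_0$ remaining $C^1$-bounded. I would implement this as follows.
\begin{enumerate}
\item Realize the isotopy by a compactly supported contact isotopy $\varphi_t$ of $J^1M$.
\item Cut it into $C^1$-small pieces.
\item For each piece, use a generating function of the contactomorphism via a broken-geodesic or composition formula. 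This adds finitely many fiber variables entering through a nondegenerate quadratic form.
\item Absorb the new variables into the $\bR^{4N}$ factor, increasing $N$ and adding a quadratic form of signature $0$ if needed.
\end{enumerate}
Because $\varphi_t$ has compact support and $W$ has closed fibers, every correction term is supported in a compact set. So each $f_t$ can be written as $g_t(w)+\varepsilon_t(w,x,y)+\|x\|^2-\|y\|^2$ with $\varepsilon_t$ compactly supported, and $f_1$ lies again in $\mathcal{S}(W)$.

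The main obstacle is making sure the lifted generating function stays in the admissible class $\mathcal{S}(W)$. Concretely:
\begin{enumerate}
\item The added quadratic variables must have balanced index, so the stabilizing form has the shape $\|x\|^2-\|y\|^2$ on $\bR^{2N}\times\bR^{2N}$.
\item The perturbation must be compactly supported, not merely $C^1$-bounded.
\item A stabilization of $W$ by $\bR^{4N}$ with a quadratic form must not change the torsion pair.
\end{enumerate}
For the first point, I would stabilize further by adding $\pm$ squares to equalize indices, padding to a multiple of $4$. This stays within the same stable class; the definition explicitly allows passing to $W\times\bR^{4N}$ with balanced form, and the fiberwise homology with $\bC^\rho$ coefficients is unchanged up to degree shift. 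For the second point, I would cut off the correction terms outside a large compact set. There $f$ agrees with the quadratic form plus $g$, the contact isotopy is the identity, and the fibration-at-infinity condition persists. For the third point, the fiber bundle over $M$ underlying $W\times\bR^{4N}$ is the stabilization of $W$. The local system pulls back from $W$, and $\tau_k$ is a stable invariant by the theory recalled in the introduction \cite{I02,IK93a}. Hence the class contributed is still $\tau_k(W,\rho)$.

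Finally, if the isotopy passes through a Legendrian that $f_t$ generates only as an immersion, this is harmless: only the endpoint $f_1$ must generate the embedded $\Lambda_1$, and it does, since its reduction is $\varphi_1(\Lambda_0)=\Lambda_1$. Applying the argument to the reversed isotopy gives the other inclusion.
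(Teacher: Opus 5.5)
Your proposal is correct and takes the same approach as the paper. The paper likewise just invokes the homotopy lifting property for generating functions up to stabilization, in the form of \cite{EG98} (cf.\ Lemma \ref{lem: homotopy lifting}), to conclude that a generating function for $\Lambda_0$ in $\mathcal{S}(W)$ yields one for $\Lambda_1$ in $\mathcal{S}(W)$. That cited form already gives a balanced standard quadratic form and a compactly supported correction, so your balancing and cut-off steps only unpack the citation, and your third point is unnecessary because $\tau(\Lambda_1,\cF)$ records $\tau_k(W,\rho)$ itself rather than a torsion of the stabilized bundle.
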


\begin{proof}As follows from the homotopy lifting property for generating functions under Legendrian isotopy up to stabilization, if $\Lambda_0$ admits a generating function in the class $\mathcal{S}(W)$, then so does $\Lambda_1$. The desired result then follows immediately. The homotopy lifting property for generating functions under Legendrian isotopies has a long history \cite{S86, C96}, but the formulation which is needed at this level of generality may be found in \cite{EG98}.
\end{proof}

 
 \subsection{Suitable classes of fiber bundles}\label{sec: class}

Of course, given a Legendrian $\Lambda \subset J^1M$, for most choices of $\cF$ we will have $\tau(\Lambda)=\varnothing$, since many Legendrians don't admit any generating function and even those which do will only admit generating functions on certain fiber bundles. More generally, if one wishes to use generating function theory to study a particular class of Legendrians, then one should make a judicious choice for the class $\cF$ of fiber bundles and notion of admissible generating function under consideration.

As a first example, if $\Lambda$ is disjoint from a single fiber $T_xM \times \bR \subset J^1M$ then it won't admit a generating function on any fiber bundle $F \to W \to M$ with fiber $F$ a closed manifold, even after stabilization. In particular, if one wishes to study compact Legendrian submanifolds of $J^1\bR^n=\bR^{2n+1}$ one should consider a theory of generating functions with zero fiberwise Morse homology. In the $n=1$ case one could consider as in \cite{JT} generating functions $f:\bR \times \bR^{k+1}\to \bR$ on the trivial $\bR^{k+1}$-bundle over $\bR$ which are linear-quadratic at infinity, i.e. such that for $(w,v,z) \in \bR \times \bR^k \times \bR$ there holds outside of a compact subset $f(w,v,z)=Q(v)+L(z)$ for $Q$ a nondegenerate quadratic form and $L$ a nondegenerate linear form. 

If instead one is interested in studying Legendrian braids in $J^1S^1$, then one could consider  as in  \cite{T01} generating functions $f:S^1 \times \bR^k \to \bR$ on the trivial $\bR^k$-bundle over $S^1$ which are quadratic at infinity, i.e. such that for $(w,v) \in S^1 \times \bR^k$ there holds outside of a compact subset $f(w,v)=Q(v)$ for $Q$ a nondegenerate quadratic form.

Other classes of fiber bundles may be useful for other purposes. We explain the simplest example of a nontrivial application of the theory of Legendrian higher torsion known to us, in which the class of fiber bundles under consideration consists of circle bundles over orientable surfaces. In \cite{AI} the first two authors investigated a class of Legendrian submanifolds which admit generating functions on such circle bundles. Given an oriented circle bundle over an oriented surface $S^1 \to W \to \Sigma$, with Euler number $n\in \bZ$, and given a rank 1 unitary local system $\rho : \pi_1 W \to U(1)$ such that the oriented fiber $S^1$ gets sent to $u^{-1}$ for $u \neq 1$ an $n$-th root of unity, the higher Reidemeister torsion $\tau_1(W,\rho) \in H^2(\Sigma;\bR) \simeq \bR$ is a constant multiple of the Euler number of the circle bundle up to sign, as was computed by the second author and J. Klein in \cite{IK93a, IK93b}. Concretely, we have
$$ \tau_1(W,\rho) = n \, \,  \text{Im} \left(  \mathcal{L}_2(u)  \right) $$
where $\mathcal{L}_2$ is the dilogarithm
$$ \mathcal{L}_2(z) = \sum_{k=1}^\infty \frac{z^k}{k^2} .$$
That the higher Reidemeister torsion up to sign survives as a stable invariant of the circle bundle and is equal to the Euler class is a nontrivial fact. This fact is a special case of a deep result of B\"okstedt \cite{B82}, and may also be deduced from the theory of higher Reidemeister torsion, see \cite{I02}, concretely Chapter 7 (complexified B\"okstedt theorem). In  \cite{IK93a, IK93b} a more direct calculation for this special case is given.

The class of Legendrians $\Lambda \subset J^1 \Sigma$ which were studied in \cite{AI} are called {\em mesh Legendrians}. Each mesh Legendrian $\Lambda$ is determined by combintarial data and admits an explicit generating function on a circle bundle of Euler number  $\pm e_\Lambda$ for a non-negative integer $e_\Lambda  \in \bZ_{\geq0}$ which is defined in terms of the combinatorial data. In \cite{AI} a calculation was carried out to determine the possible 2-parametric fiberwise Morse complex bifurcations that a generating function for the mesh Legendrian $\Lambda$ could support, on any such circle bundle over $\Sigma$. 

In particular it was shown that for any $n \in \bZ_{\geq 0}$ a mesh Legendrian $\Lambda$ only admits a generating function on the circle bundle $S^1 \to W \to \Sigma$ of Euler number $\pm n$ if $n=e_\Lambda$, {\em even after stabilization} of the circle bundle. Hence for  the singleton class $\cF=\{(W,\rho)\}$ consisting of a circle bundle  $S^1 \to W \to \Sigma$ of Euler number $\pm n$ and  $\rho : \pi_1 W \to U(1)$  a rank 1 unitary local system which sends the oriented fiber $S^1$ to $u^{-1}$ for $u \neq 1$ an $n$-th root of unity, the Legendrian higher torsion of a mesh Legendrian $\Lambda \subset J^1 \Sigma$ in the class $\cF$ consists of the singleton
 $$\tau_1(\Lambda,\cF)=\{ e_\Lambda \text{Im}( \mathcal{L}_2(u))  \} \subset H^2(\Sigma;\bR) \simeq \bR$$
 if $n=e_\Lambda$, and is empty otherwise. This proves:
 \begin{theorem}[implicit in \cite{AI}] The Legendrian higher torsion of a mesh Legendrian $\Lambda \subset J^1 \Sigma$ recovers (and uniquely determines) the Euler number (up to sign) of the unique circle bundle that may be used to generate it, even after stabilization of the tube bundle and/or Legendrian isotopy of $\Lambda$.  \end{theorem}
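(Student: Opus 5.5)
The statement is implicit in \cite{AI}, so the plan is to put together three ingredients established there or recalled above.

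\emph{Step 1: existence of a generating function.} Each mesh Legendrian $\Lambda$ has, by construction in \cite{AI}, an explicit generating function on a circle bundle $S^1 \to W \to \Sigma$ of Euler number $\pm e_\Lambda$. Because it is a fibration at infinity of the required form, it lies in $\mathcal{S}(W)$. So for $n = e_\Lambda$ the set $\tau_1(\Lambda,\cF)$ is nonempty.

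\emph{Step 2: uniqueness, even after stabilization.} Suppose $\Lambda$ admits a generating function $f \in \mathcal{S}(W')$ on a stabilization of a circle bundle $W'$ of Euler number $\pm n$. The front of $\Lambda$ is the Cerf diagram of the family $f_m$. Fiberwise Morse theory then yields a $2$-parameter family of twisted Morse complexes $C_*(f_m;\bC^\rho)$ over $\Sigma$. Stabilization by $\|x\|^2-\|y\|^2$ only shifts degrees, so this family is unchanged up to shift. The combinatorial analysis of \cite{AI} lists the possible bifurcations of such families compatible with the mesh front. It shows that the resulting map $\Sigma \to \mathrm{Wh}^h$ has degree-$2$ torsion equal to $e_\Lambda\,\mathrm{Im}(\mathcal{L}_2(u))$. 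Independently, the Igusa--Klein computation \cite{IK93a,IK93b} gives $\tau_1(W',\rho)=n\,\mathrm{Im}(\mathcal{L}_2(u))$. The torsion of $W'$ is the torsion computed from any admissible fibered function on it, by the framing principle / Appendix \ref{sec: higher torsion of bundles}. Equating the two values gives $n=e_\Lambda$, since $\mathrm{Im}(\mathcal{L}_2(u))\neq 0$ for a nontrivial root of unity $u$ with the appropriate choice.

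\emph{Step 3: invariance.} Apply the theorem above: $\tau(\Lambda,\cF)$ is a Legendrian isotopy invariant, by homotopy lifting of generating functions up to stabilization. The singleton $\{e_\Lambda\,\mathrm{Im}(\mathcal{L}_2(u))\}$ is therefore preserved. It recovers $|e_\Lambda|$ because $\mathrm{Im}(\mathcal{L}_2(u))\neq 0$.

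The main obstacle is Step 2. It requires checking that \emph{every} generating function on a stabilized circle bundle has fiberwise complexes whose torsion matches the combinatorial count. This means ruling out exotic handle-slide and birth/death patterns in the $2$-parameter family, which is the case analysis carried out in \cite{AI}. I would cite it rather than redo it, and verify only that it is stable under the quadratic stabilization.
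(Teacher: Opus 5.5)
Your Steps 1 and 3 agree with the paper, but Step 2 inverts the logic, and this leaves a genuine gap. In the paper, the bifurcation analysis of \cite{AI} yields a torsion-free non-existence statement: a mesh Legendrian admits a generating function on (a stabilization of) the circle bundle of Euler number $\pm n$ only if $n=e_\Lambda$. Torsion enters only afterwards. The Igusa--Klein formula $\tau_1(W,\rho)=n\,\text{Im}(\mathcal{L}_2(u))$ is evaluated on the single admissible bundle, giving $\tau_1(\Lambda,\cF)=\{e_\Lambda\,\text{Im}(\mathcal{L}_2(u))\}$ or $\varnothing$. No Morse-complex torsion and no framing principle are needed.

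You instead try to \emph{derive} $n=e_\Lambda$ by equating a Morse-complex torsion with the Igusa--Klein value. The input you attribute to \cite{AI} is that every generating function on every stabilized circle bundle, with every admissible $u$, has Morse-complex torsion $e_\Lambda\,\text{Im}(\mathcal{L}_2(u))$. That is not the statement the paper extracts from \cite{AI}. For $n=e_\Lambda$ it is just Igusa--Klein plus the degree-$2$ framing principle. For $n\neq e_\Lambda$ there are, by \cite{AI}, no such generating functions, so the claim is vacuous there and rests on exactly the non-existence result you are trying to deduce.

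Even granting that input, the comparison cannot give uniqueness in general, because it only has force when $\text{Im}(\mathcal{L}_2(u))\neq 0$. For Euler number $\pm 2$ the only admissible $u\neq 1$ is $u=-1$, and $\mathcal{L}_2(-1)=-\pi^2/12$ is real, so both sides vanish. For Euler number $\pm 1$ there is no root of unity $u\neq 1$ with $u^1=1$, so the class $\cF$ is empty and torsion says nothing at all. Hence your argument can never rule out generating functions on circle bundles of Euler number $\pm 1$ or $\pm 2$, and the ``unique circle bundle'' part of the statement is not proved.

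The fix is to cite the non-existence result of \cite{AI} directly in Step 2; it holds for every $n\geq 0$ and survives stabilization. With that in place, the invariance argument of Step 3 is fine.
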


In fact in this specific situation the invariant can be refined to a Turaev type torsion which  is also sensitive to the sign of the Euler number,  and in this refined version the invariant was defined and used in \cite{AI} to distinguish Legendrian isotopy classes of mesh Legendrians in $J^1\Sigma$ in the same formal Legendrian class. The computations of \cite{AI} can be thought of as a proof of concept that higher torsion invariants may sometimes be calculated for explicit Legendrians. 

Our final example of a class $\cF$ of fiber bundles which one may want to consider for the definition of a Legendrian higher torsion invariant, and the most important class for the purposes of the present article, is the class of {\em tube bundles}. Roughly speaking, tube bundles are the most general class of fiber bundles on which one might hope to construct generating functions for {\em nearby Lagrangians}. Hence tube bundles provide the natural class of fiber bundles in which to define a Legendrian higher torsion invariant that may be relevant in the context of the  nearby Lagrangian conjecture. We discuss tube bundles and the associated Legendrian higher torsion invariant, which we call {\em tube torsion}, in Section \ref{sec: tube} below.

\subsection{Stable Gauss map} \label{sec: stable gauss map}

Given a Legendrian $\Lambda \subset J^1M$, for the Legendrian higher torsion to be defined, i.e. for $\tau(\Lambda,\cF)$ to be nonempty, in addition to the necessity of making a suitable choice for the class $\cF$ of fiber bundles (and unitary local systems) under consideration, it is also necessary that the stable Gauss map $\Lambda \to U/O$ is trivial, i.e. nullhomotopic Indeed, the triviality of the stable Gauss map is a necessary condition for the existence of {\em any} generating function, even in a very weak sense. Let us briefly recall the definition of the stable Gauss map.

 Let $\pi_{\text{Lag}} : J^1M = T^*M \times \bR \to T^*M$ denote the Lagrangian projection and consider the symplectic vector bundle $E=\pi_{\text{Lag}} ^*T(T^*M)|_\Lambda$ over $\Lambda$. Note that $E$ is equipped with two polarizations, i.e. Lagrangian sub-bundles, namely $G=T\Lambda$ (since $\pi_{\text{Lag}} |_\Lambda : \Lambda \to T^*M$ is a Lagrangian immersion) and $V=\pi_{\text{Lag}} ^*\nu|_\Lambda$, where $\nu$ is the Lagrangian distribution on $T^*M$ tangent to the fibers of the cotangent bundle projection $T^*M \to M$. The stable Gauss map measures the stable difference between these two polarizations. 
 
 Concretely, pick a Lagrangian sub-bundle $H$ of $E$ which is transverse to $V$, so that we have a isomorphism of symplectic vector bundles $E \simeq H \oplus H^*$ under which $V$ gets identified with $H^*$. There exists some real vector bundle $H'$ over $\Lambda$ such that $H \oplus H'$ is a trivial real vector bundle. Then $H' \oplus (H')^* \oplus E \simeq (H \oplus H') \oplus (H \oplus H')^*$ is isomorphic to a trivial symplectic vector bundle $\Lambda \times \bC^m$. The Lagrangian sub-bundle $H' \oplus G  \subset (H' \oplus (H')^*) \oplus E$ may thus be viewed as a Lagrangian sub-bundle of the trivial symplectic vector bundle $\Lambda \times \bC^m$, and hence gives a map $\Lambda \to U_m/O_m$. After letting $m \to \infty$ we get the stable Gauss map $\Lambda \to U/O$, which is easily verified to be independent of choices up to homotopy.  

We say that the stable Gauss map $\Lambda \to U/O$ of a Legendrian $\Lambda \subset J^1M$ is {\em trivial} when it is nullhomotopic. That the triviality of the stable Gauss map $\Lambda \to U/O$ is a necessary condition for $L$ to admit any kind of generating function was shown by Giroux in \cite{G90}. In particular, no matter what  the class of fiber bundles $\cF$ under consideration is, for  Legendrian higher torsion to be defined, i.e. for $\tau(\Lambda,\cF)$ to be nonempty, we must have that the stable Gauss map $\Lambda \to U/O$ is trivial. 

We will say that the front of Legendrian $\Lambda \subset J^1M$ has {\em moderate singularities} if the front projection $$\Lambda \hookrightarrow J^1M \to J^0M = M \times \bR$$ only has cusp singularities. This property is closely related to the triviality of the stable Gauss map. Indeed, for $\Lambda \subset J^1M$ a Legendrian with moderate singularities, consider the co-oriented hypersurface $(S,\nu)$ where $S \subset \Lambda$ is the locus of points whose front have a cusp and $\nu$ is the Maslov co-orientation of $S$ in $\Lambda$, see \cite{A}. For simplicity let us assume that $\Lambda$ is closed, connected and orientable. The co-oriented cycle $(S,\nu)$ determines a class in $H_{n-1}(\Lambda;\bZ)$, where $n=\dim \Lambda$, which is Poincar\'e dual to the Maslov class $\mu_1 \in H^1(\Lambda,\bZ)$. We have $\mu_1=0$ if and only if there exists a bounding chain for $(S,\nu)$, i.e. a collection of subdomains $\Omega_1, \ldots, \Omega_k \subset \Lambda$ with disjoint boundaries such that $\coprod_j \partial \Omega_j = S$ and for each component of $S$ realized as (part of) the boundary of $\Omega_j$ the Maslov-coorientation $\nu$ on this component agrees with the outwards pointing coorientation determined by $\Omega_j$. We then have:

\begin{proposition}\label{prop: simp stab triv}
Let $\Lambda$ be a closed, connected, smooth manifold and $\Lambda \subset J^1M$ a Legendrian embedding. If the front of $\Lambda$ has moderate singularities and if its Maslov class $\mu_1 \in H^1(\Lambda,\bZ)$ is trivial, then the stable Gauss map $\Lambda \to U/O$ is trivial. In fact, any bounding chain for the pair $(S,\nu)$ determines a homotopy class of nullhomotopy of the Gauss map $\Lambda \to U/O$. \end{proposition}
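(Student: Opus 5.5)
The plan is to construct the nullhomotopy explicitly over $\Lambda$ by comparing the two Lagrangian polarizations $G = T\Lambda$ and $V$ of $E$ region by region. First I would use the moderate singularity hypothesis to describe the relative position of $G$ and $V$. Off the cusp locus $S$, the front projection $\Lambda \to M$ is a local diffeomorphism, so $G$ is transverse to $V$. Along $S$, a fold of $\pi|_\Lambda$, the subspace $G \cap V$ is exactly one-dimensional, and this line field crosses the Maslov cycle transversally. The Maslov co-orientation $\nu$ records the direction of crossing.

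Next I would trivialize stably over each region. On $\Lambda \setminus S$ take $H = G$ itself as the Lagrangian complement to $V$. The stable Gauss map built with this choice is then constant, equal to the basepoint $[\bR^m] \in U/O$, because $H' \oplus G \cong H' \oplus H$ is the standard real trivial subbundle. This constant value depends only on the choice of complement $H'$ of $H$, which is a trivialization of $T\Lambda$ stably, up to homotopy. The global map therefore fails to be canonically constant only because the trivializations differ across $S$, where $G$ leaves the transverse set.

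The key step is to use the bounding chain $\Omega_1, \ldots, \Omega_k$ to reconcile the two sides of $S$. On each $\Omega_j$ I would replace the comparison polarization $V$ by $V$ rotated by the standard fibrewise ``quarter turn'': choose a metric compatible $J$ and use $JV$, or equivalently flip one of the signs in the quadratic form. I would then check, in the local normal form of a cusp (generating function $x^3 - q_1 x$), that across each component of $\partial\Omega_j$ with outward co-orientation equal to $\nu$ the following holds: $G$ is transverse to $V$ on one side and to the modified polarization on the other, and the two resulting constant maps to $U/O$ glue through an explicit path in $U_1/O_1$. The crossing is by the path $e^{i\theta}\bR$, $\theta \in [0, \pi/2]$, stabilized. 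The co-orientation condition guarantees that the crossing direction matches, so that the loop closes up in $U_1/O_1 \simeq S^1$ with winding zero rather than $\pm 1$. This is precisely the role of $\mu_1 = 0$. Gluing these local homotopies produces a global homotopy from the stable Gauss map to a constant, and the construction depends only on the chain $\{\Omega_j\}$, up to the contractible choices of $H'$, metric and collar.

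I expect the main obstacle to be this gluing step. One must verify that the local model homotopies near $S$ are compatible with the constant trivializations on the open regions. One must also show that the outcome is independent of choices up to homotopy, which means checking that the space of auxiliary data (complements $H$ transverse to the relevant polarization, collars of $S$) is contractible. For the independence statement I would argue that two choices of data with the same bounding chain differ by a homotopy supported in a collar of $S$, where everything reduces to the one-dimensional model in $U_1/O_1$.
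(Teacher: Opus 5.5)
The overall idea can be made to work: change the reference polarization $V$ by rotating it in the direction of the kernel line, and let the bounding chain govern the rotation. (The paper gives no argument here; it defers to \cite{AD22} and \cite{B27}.) However, the key step fails as you state it, for three reasons.

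\textbf{The two modifications you call equivalent are not.} $JV$ rotates all $n$ directions, and $G\pitchfork JV$ holds only where $\Lambda$ projects locally diffeomorphically to the fibre directions. In the fold model $\{q_1=p_1^2\}\times\mathrm{graph}(dh)$ this requires $d^2h$ to be nondegenerate, which fails in general even along $S$. Only the rotation $V_\theta(\ell)$ of the single line $\ell=G\cap V$ is adapted to folds.

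\textbf{Even $V_{\pi/2}(\ell)$ is not transverse to $G$ on all of $\Omega_j$.} Off $S$, write $G$ as the graph of a symmetric form $A$ over a horizontal complement, and let $e$ span $\ell$. Then $G\cap V_\theta(\ell)\neq 0$ exactly when $\cot\theta=A(Je,Je)$. Near $S$ one has $A(Je,Je)\to\pm\infty$, with opposite signs on the two sides; that is all your local check sees. In the interior of $\Omega_j$, $A(Je,Je)$ can vanish. It must vanish if, for example, $\Omega_j\subset\Omega_i$ with $\Omega_i\setminus\Omega_j$ connected: that region lies on the $-\nu$ side of $\partial\Omega_i$ and on the $+\nu$ side of $\partial\Omega_j$. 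So there are no two constant maps to glue. The role you give to $\mu_1=0$, a local loop of winding zero, is also misplaced: across one component of $S$ you only get a path.

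\textbf{You never use the fold hypothesis.} Rotating along $\ell$ over $\Omega$ requires extending $\ell$ from $S$ to a line subbundle of the stabilized $V$. This is where being a fold, rather than merely corank one, matters. At a fold, $\ell$ is transverse to $S$, so $\ell\cong T\Lambda|_S/TS$ is trivialized by $\nu$ and extends after stabilization. At general corank-one singularities nothing identifies $\ell$ with the normal line of $S$.

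\textbf{The repair is to let the rotation angle vary continuously.} Extend $\ell$ stably over $\Lambda$. At each $x$, the loop $\theta\mapsto V_\theta(\ell_x)$, $\theta\in\bR/\pi\bZ$, fails to be transverse to $G_x$ at exactly one angle $\theta^*(x)$:
\begin{itemize}
\item off $S$, $\theta^*$ is given by $\cot\theta^*=A(Je,Je)$;
\item on $S$, $\theta^*=0$, because $e\in G$ and $\omega(e,Je)\neq 0$ rule out every $\theta\in(0,\pi)$.
\end{itemize}
Hence $\theta^*:\Lambda\to\bR/\pi\bZ$ is continuous and crosses $0$ exactly along $S$, transversally in the direction $\nu$. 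So it represents $\pm\mu_1$.

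A bounding chain gives an explicit real lift $\tilde\theta$. Take the canonical lift into $(0,\pi)$ off $S$, and shift it by $\mp\pi$ times the number of $\Omega_j$ containing the point, with the sign fixed by the convention for $\nu$. Then $s\mapsto V_{s(\tilde\theta+\pi/2)}(\ell)$, $s\in[0,1]$, deforms $V$ to a polarization transverse to $G$ everywhere. This is the desired nullhomotopy, and it depends only on the chain up to contractible choices.

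\textbf{An equivalent view.} Collapsing the contractible region $\{G\pitchfork V\}$ exhibits the Gauss map as two steps. First comes the Pontryagin--Thom collapse $\Lambda\to\Sigma(S_+)$. Then comes the family of normal loops $S\to\{1\}\times BO\subset\Omega(U/O)$, which classifies $\ell$. Once $\ell$ is trivial, the map reduces to $\Lambda\to S^1\to U/O$, and this is null because $\mu_1=0$.
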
 

\begin{proof}
When $\Lambda$ has the homotopy type of a sphere of dimension $>1$ we have $\mu_1=0$ for trivial reasons and the argument is given in \cite{AD22}. The general case may be proved by a similar line of reasoning, see \cite{B27} for details. \end{proof}

\begin{remark}
To be more precise, the statement of Proposition \ref{prop: simp stab triv} should be that any bounding chain determines a homotopy of Lagrangian distributions between the stabilizations of $G$ and $V$ in the stabilization of the symplectic vector bundle $E$, with notation as in the definition of the stable Gauss map above. So although the stable Gauss map $\Lambda \to U/O$ is only well-defined up to homotopy, we may still speak of a homotopy class of nullhomotopy of the stable Gauss map by defining this to be a homotopy of Lagrangian distributions between the stabilizations of $G$ and $V$. 
\end{remark}

Note that if $L \subset T^*M$ is a nearby Lagrangian, then the front of the Legendrian lift of $L$ has moderate singularities if and only if the tangencies of $L$ with respect to the foliation of $T^*M$ by the cotangent leaves $T^*_mM$, $m \in M$, has {\em quadratic} tangencies. When this is the case we will say that $L \subset T^*M$ has {\em moderate tangencies}. From Proposition \ref{prop: simp stab triv} and the fact that nearby Lagrangians always trivial Maslov class \cite{A12} it follows that any nearby Lagrangian with moderate tangencies has stably trivial Gauss map $L \to U/O$, and moreover there is a distinguished choice of homotopy class of nullhomotopy given any bounding chain for the cusp locus.

\section{Legendrian tube torsion}\label{sec: tube}

In this section we focus on a specific version of Legendrian higher torsion which we call {\em Legendrian tube torsion}.

\subsection{Generating functions for nearby Lagrangians}\label{sec: gen fun nearby}

We recall that a nearby Lagrangian is an exact Lagrangian submanifold $L \subset T^*M$, where $L$ and $M$ smooth manifolds which are closed, connected and of the same dimension. The nearby Lagrangian conjecture states that any nearby Lagrangian is isotopic through nearby Lagrangians to the zero section.

For a nearby Lagrangian $L \subset T^*M$ to admit a generating function on a fiber bundle $F \to W \to M$, it is necessary that the fiberwise Morse homology of the generating function have Euler characteristic $\pm 1$. Indeed, the projection $L \to M$ is known to be a homotopy equivalence \cite{FSS, Na09, A12, Kra13, Gui12}, and in particular has degree $\pm 1$.

The most basic example to keep in mind is the trivial bundle $M \times \bR^{4N}$ with a generating function $f:M \times \bR^{4N} \to \bR$ which is equal to the standard fiberwise quadratic form $\|x\|^2-\|y\|^2$ outside of a compact subset, $(x,y) \in \bR^{2N} \times \bR^{2N}$. We will call such generating functions {\em standard quadratic at infinity}.  Note that the zero-section can be generated by such a function (namely the constant family of standard quadratic forms $F(m,x,y)=\|x\|^2-\|y\|^2$). Therefore, if the nearby Lagrangian conjecture is true, then by the homotopy lifting property for generating functions up to stabilization it would follow that any nearby Lagrangian admits a generating function standard quadratic at infinity.

While existence of such generating functions for nearby Lagrangians is not known, there has been recent progress on existence of generating functions for nearby Lagrangians on the more general class of so-called {\em tube bundles}. The class of tube bundles is a class of fiber bundles which was introduced by Waldhausen in his manifold approach to the algebraic K-theory of spaces \cite{W82}. The precise definition of a tube bundle that we will work with in the present article will be given in Section \ref{sec: tube bundles} below, but let us note that in particular the fiberwise Morse homology of our notion of {\em generating function of tube type} has rank 1 concentrated in a single degree, just like a nondegenerate quadratic form.  The foundational result of the article \cite{ACGK} by Abouzaid, Courte, Guillermou and Kragh is the following. 

\begin{theorem}[\cite{ACGK}]\label{thm: existence generating functions tube}  A nearby Lagrangian $L \subset T^*M$ with trivial stable Gauss map $L \to U/O$ admits a generating function of tube type. \end{theorem}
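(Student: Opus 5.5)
The statement is the existence theorem of \cite{ACGK}, so the plan is to reconstruct their argument in the language set up above rather than to find a new route. The proof has three stages. First, we pass from $L$ to a sheaf-theoretic object. Second, we use the nullhomotopy of the stable Gauss map to make that object "trivial as a local system of spectra/chain complexes". Third, we realize the result geometrically as a fibered function on a tube bundle whose fiberwise critical locus reproduces $L$.

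\textbf{Step 1 (sheaf quantization and doubling).} Following Guillermou \cite{Gui12}, we attach to the Legendrian lift $\Lambda \subset J^1M$ a sheaf $\mathcal{F}$ on $M \times \bR$ whose microsupport is the cone over $\Lambda$. By the doubling trick, $\mathcal{F}$ is equal to a constant local system for $z \ll 0$ and zero for $z \gg 0$. Its stalks at large negative $z$ compute the "fiberwise Morse homology at infinity". By the homotopy equivalence $L \to M$ \cite{FSS, Na09, A12, Kra13}, this is rank one concentrated in a single degree. In \cite{ACGK} this must be done with coefficients in spectra (sphere-spectrum coefficients) rather than a field. The obstruction to doing so is exactly the twisting by $L \to U/O \to BPic(\mathbb{S})$, which vanishes by hypothesis. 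A chosen nullhomotopy of the Gauss map fixes this trivialization.

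\textbf{Step 2 (from sheaf to fibered Morse data).} The spectral sheaf gives a family over $M$ of finite spectra with a filtration by $z$. Equivalently, it gives a map from $M$ into a space of "parametrized filtered finite spectra with total object $\mathbb{S}$". Using Waldhausen's manifold model \cite{W82}, we represent this map by a bundle of compact manifolds together with a fiberwise function. The fibration $\cH_\infty \to \cT_\infty \to BG$, together with the trivialization of the spherical fibration from Step 1, is what guarantees that the classifying map lifts to $\cT_\infty$. This means the representing bundle can be taken to be a tube bundle $W \to M$, with a function $f$ on it that is standard near the boundary and whose fiberwise Morse homology is rank one.

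\textbf{Step 3 (matching the Legendrian).} We need $\Sigma_f$ to generate exactly $\Lambda$, not just to have the right homotopy type. Here we deform $f$ by a fiberwise isotopy so that its Cerf diagram matches the front of $\Lambda$. We use the $h$-principle/flexibility for generating functions with prescribed critical values \cite{EG98} together with the uniqueness of sheaf quantization: two functions whose associated sheaves agree are related by stabilization and fibered diffeomorphism.

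\textbf{Main obstacle.} The hardest step is Step 2, realizing the spectral (as opposed to chain-level) data by an honest smooth tube bundle with a generating function. This is where Waldhausen's stable parametrized $h$-cobordism theorem enters. My first attempt would be to cite \cite{ACGK} for it directly, since only the black-box realization statement is needed here.
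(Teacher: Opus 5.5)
Your Steps 2--3 do not close. Nothing in the argument ever produces a function whose fiberwise critical set is $\Lambda$. You first try to build a tube bundle with a function out of spectral data, and only afterwards try to force $\Sigma_f=\Lambda$, but no flexibility result does this. What \cite{EG98} provides is the homotopy lifting property: it transports a function that \emph{already} generates some Legendrian along a Legendrian isotopy. It cannot turn a function with unrelated critical locus into a generating function for a prescribed $\Lambda$. The uniqueness you invoke (two functions with the same associated sheaf differ by stabilization and fibered diffeomorphism) is not an available theorem. It would in fact settle questions this paper leaves open, such as whether $\tau(L)$ is a single coset of $\text{im}(p)$, or whether every tube-type generating function of $L$ is natural.

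There is also an information loss that shows the argument proves too much. After Step 1 you use the hypothesis only through the composite $L\to U/O\to B\mathrm{Pic}(\mathbb{S})$ and the induced trivialization in $BG$. That composite is known to be null for \emph{every} nearby Lagrangian (Abouzaid--Kragh, reproved sheaf-theoretically by Jin). So if Steps 2--3 worked, every nearby Lagrangian would have a tube-type generating function, hence by Giroux \cite{G90} a trivial stable Gauss map, which the paper records as unknown. Separately, the sequence $\cH_\infty\to\cT_\infty\to BG$ lifts a map \emph{already} landing in $\cT_\infty$ to $\cH_\infty$ once its image in $BG$ is trivialized. It does not manufacture a map to $\cT_\infty$ from filtered spectra.

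The missing idea is to keep $\Lambda$ as the critical locus from the start and get the tube bundle as an output. This is the generating-function version of Guillermou's doubling trick sketched in Section~\ref{sec: DoublingConstruction}:
\begin{enumerate}
\item The full nullhomotopy of $L\to U/O$ gives, via Giroux, a \emph{local} generating function for $\Lambda$ near its critical set in $M\times\bR^{N+1}$.
\item Adding a cubic term in a distinguished coordinate $x$ and cutting off gives a global function $h$, with $\partial h/\partial x\neq 0$ away from a compact set, generating the double $\Lambda_0\sqcup\Lambda_\epsilon$.
\item Because $L$ is embedded, $\Lambda$ has no Reeb chords, so $\Lambda_0\sqcup\Lambda_s$ is a Legendrian isotopy as $s$ grows. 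Homotopy lifting then gives functions $h_s$ generating it.
\item Once a level $c$ separates $\Lambda_0$ from $\Lambda_T$, the sublevel set $\{h_T\le c\}$ is a tube bundle in a partition-type model \cite{ACGK}, and $h_T$ generates $\Lambda$ on it.
\item \cite{AACK} converts this to the quadratic tube model used here, for a $C^1$-close Hamiltonian isotopic copy; homotopy lifting then finishes.
\end{enumerate}
Embeddedness enters exactly through the Reeb-chord-free isotopy. No sheaves, spectra, or Waldhausen realization results are needed.
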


So the condition that $L \to U/O$ is trivial, which as explained in Section \ref{sec: stable gauss map} is necessary for the existence of {\em any} kind of generating function, is in fact both necessary and sufficient for a nearby Lagrangian $L \subset T^*M$ to admit a generating function of tube type.  It is not known whether every nearby Lagrangian $L \subset T^*M$ must have stably trivial Gauss map. However, it was shown in \cite{ACGK} that if  $L \subset T^*M$ is a nearby Lagrangian, then $L \to U/O$ is trivial on homotopy groups.  In particular, if $M$ is a homotopy sphere, then any nearby Lagrangian $L \subset T^*M$ has stably trivial Gauss map and hence there holds:

\begin{theorem}[\cite{ACGK}] If $M$ is a homotopy sphere, then any nearby Lagrangian $L \subset T^*M$ admits a generating function of tube type. \end{theorem}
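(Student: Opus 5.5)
The statement follows by combining Theorem \ref{thm: existence generating functions tube} with the homotopy-theoretic fact from \cite{ACGK} that the stable Gauss map of a nearby Lagrangian induces the zero map on homotopy groups. The plan is therefore to reduce everything to showing that, for $M$ a homotopy sphere $S^n$, a map $L \to U/O$ that is trivial on homotopy groups is nullhomotopic.

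First, recall that the projection $L \to M$ is a homotopy equivalence \cite{FSS, Na09, A12, Kra13, Gui12}. So $L$ is itself a homotopy $n$-sphere, and in particular a simply connected CW complex with the homotopy type of $S^n$ (for $n \geq 2$; the case $n=1$ is handled at the end). Next, fix a homotopy equivalence $h: S^n \to L$. The homotopy class of the stable Gauss map $G: L \to U/O$ is determined by the composite $G \circ h: S^n \to U/O$. Since $U/O$ is an H-space (indeed an infinite loop space), it is simple, so free and based homotopy classes agree. Hence this class is exactly the element $[G\circ h] \in \pi_n(U/O)$, which is the image of the fundamental class under $G_* : \pi_n(L) \to \pi_n(U/O)$. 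By the result of \cite{ACGK} that $G_*$ vanishes on all homotopy groups, this element is zero. Therefore $G \circ h$ is nullhomotopic, and so is $G$, since $h$ has a homotopy inverse. Theorem \ref{thm: existence generating functions tube} now applies and produces a generating function of tube type.

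The only step needing care is the identification of $[L, U/O]$ with $\pi_n(U/O)$. It uses two facts: that $L$ is genuinely homotopy equivalent to $S^n$ rather than merely having the homology of a sphere, which the homotopy equivalence $L \simeq M$ provides; and that $U/O$ is simple, which removes basepoint issues. For $n=1$, $M = S^1$ and $L \simeq S^1$. Here $\pi_1(U/O) \cong \bZ$ is detected by the Maslov class, and the same argument goes through, alternatively via the vanishing of the Maslov class \cite{A12}. For $n=0$ the statement is trivial. No further obstacle is expected: the substantive content lies in \cite{ACGK}.
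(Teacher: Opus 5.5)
Your proposal is correct and follows the same route as the paper: by \cite{ACGK} the stable Gauss map $L \to U/O$ vanishes on homotopy groups, and since $L \simeq M$ is a homotopy sphere this forces the map to be nullhomotopic, after which Theorem~\ref{thm: existence generating functions tube} applies. Your check that free and based homotopy classes agree (because the H-space $U/O$ is simple) makes explicit a step the paper leaves implicit.
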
 

Finally, the discussion at the end of Section 3 in \cite{ACGK}  (see the proof of Theorem 3.26 and preceding lemmas) makes it apparent that tube bundles are in some sense the most general class of fiber bundles on which one may expect a nearby Lagrangian to admit a generating function. The class of tube bundles therefore seems a reasonable class of fiber bundles to study in the context of the nearby Lagrangian conjecture, particularly in the case of nearby Lagrangian homotopy spheres. 

\subsection{Functions of tube type}\label{sec: tube bundles}

Rather than the model for tube bundles and functions of tube type used in \cite{ACGK} we  will use a convenient model for the space of tubes which was developed in work of the first author joint with Abouzaid, Courte and Kragh in \cite{AACK}. That Theorem \ref{thm: existence generating functions tube} also holds for this model was proved in \cite{AACK}. First we will need to introduce a few preliminary notions. 

\begin{definition}\label{def: quad tube}
A $C^1$-function $g:\bR^N \to \bR$ is a {\em{function of quadratic tube type}} if it satisfies the following:
\begin{enumerate}
\item $g$ is homogeneous of degree 2, i.e. $g(\lambda v) = \lambda^2 g(v)$.
\item $g$ has no critical point other than the origin.
\item $g$ is homotopic through functions satisfying (1) and (2) to a non-degenerate quadratic form. 
\end{enumerate}
\end{definition}

Note that in general the homogenization of a smooth function $h:S^{N-1} \to \bR$, i.e. $g(v)=\|v\|^2h(v/\|v\|)$ for $v \neq 0$ and $g(0)=0$, will only be of class $C^1$. 

\begin{definition}\label{def: tube}
A smooth function $f:\bR^N \to \bR$ is a {\em function of tube type} if there exists a function of quadratic tube type $g:\bR^N \to \bR$ such that $\| \nabla f - \nabla g \| \leq C$ for some constant $C>0$.
\end{definition}

\begin{remark} Note that if $g$ exists, then it is uniquely determined by $f$, indeed $g(v) = \lim_{\lambda \to \infty}  \frac{1}{\lambda^2}f(\lambda v)$. \end{remark}

Let $\cT_{k,N}^q$ denote the space of functions of quadratic tube type on $\bR^N$ such that the non-degenerate quadratic form in condition (3) has index $k$, and let $\cT_{k,N}$ be the space of functions of tube type on $\bR^N$ such that the associated function of quadratic tube type is in $\cT_{k,N}^q$. We also denote:
$$ \cT^q_N = \coprod_{k=0}^N \cT^q_{k,N}, \quad \cT^q = \coprod_{N=0}^\infty \cT^q_N , \quad \cT_N = \coprod_{k=0}^N \cT_{k,N}, \quad \cT = \coprod_{N=0}^\infty \cT_N. $$
Note that if $f_i:\bR^{N_i} \to \bR$ is a function of tube type for $i=1,2$, then $f_1 \oplus f_2 : \bR^{N_1} \times \bR^{N_2} \to \bR$ is also of tube type, so we have an operation $\cT_{k_1,N_1} \times \cT_{k_2,N_2} \mapsto \cT_{k_1+k_2,N_1+N_2}$ which endows $\cT$ with the structure of a topological monoid. We also have the topological submonoid of functions of quadratic tube type $\cT^q \subset \cT$.  Clearly the inclusion $\cT^q \subset \cT$ is an equivalence as the linear interpolation $(1-t)f+tg$ of a function of tube type $f$ with its associated function of quadratic tube type $g$ gives a deformation retraction of $\cT$ onto $\cT^q$.

We also denote $\cQ_{k,N}$ for the space of non-degenerate quadratic forms $q:\bR^N \to \bR$ of index $k$ and
$$ \cQ_N = \coprod_{k=0}^N \cQ_{k,N} , \qquad \cQ= \coprod_{N=0}^\infty \cQ_N. $$
Note that $\cQ_{k,N} \subset \cT_{k,N}^q$, and that $\cQ$ is a submonoid of $\cT^q$ and hence also of $\cT$.   

\begin{definition}
Let $M$ be a closed, connected, smooth manifold.
\begin{enumerate}
\item A {\em{fibered function of tube type}}  is a smooth function $f:M \times \bR^N \to \bR$ such that for each $m \in M$ the restriction $f_m : \bR^N \to \bR$ is of tube type, i.e. such that $f_m \in \cT_N$ for each $m \in M$. 
\item A {\em{fibered function of quadratic tube type}}  is a $C^1$ function $f:M \times \bR^N \to \bR$ such that for each $m \in M$ the restriction $f_m : \bR^N \to \bR$ is quadratic of tube type, i.e. such that $f_m \in \cT_N^q$ for each $m \in M$. 
\item A {\em{fibered function of rigid tube type}}  is a smooth function $f:M \times \bR^N \to \bR$ such that for each $m \in M$ the restriction $f_m : \bR^N \to \bR$ is a non-degenerate quadratic form, i.e. if $f_m \in \cQ_N$ for each $m \in M$. 
\end{enumerate}
\end{definition}

\begin{remark} Since $M$ is connected, the index $k$ of any such fibered function of tube type is well-defined. \end{remark}


To a fibered function of tube type $f:M \times \bR^N \to \bR$ we may associate the fibered function of quadratic tube type $g:M \times \bR^N \to \bR$ such that for each $m \in M$ the restriction $g_m:\bR^N \to \bR$ is the function of quadratic tube type associated to the restriction $f_m:\bR^N \to \bR$. 

Fibered functions of tube type over $M$ form a monoid. Fibered functions of quadratic tube type over $M$ form submonoid of the monoid of fibered functions of tube type over $M$, and fibered quadratic functions of rigid tube type form yet a smaller submonoid.

We will reserve the following terminology for the monoid operation on fibered functions of tube type when one of the two functions is rigid. 

\begin{definition}
\label{def: TwistedSister}
Let $f:M \times \bR^{N_1} \to \bR$ be a fibered function of tube type and let $q:M \times \bR^{N_2}\to \bR$ be a fibered function of rigid tube type. We call $f\oplus q:M \times \bR^{N_1} \times \bR^{N_2} \to \bR$ the {\em{twisted stabilization}} of $f$ by $q$. 
\end{definition}

By a {\em{stabilization}} without the adjective {\em twisted} we will mean a stabilization with respect to a constant fibered function of rigid tube type $q:M \times \bR^N \to \bR$, i.e. $q(x,v)=Q(v)$ for some fixed non-degenerate quadratic form $Q: \bR^N \to \bR$. In fact, typically we will stabilize by the standard form on $\bR^4$ given by  $q(x,y)=\|x\|^2-\|y\|^2$ for $(x,y) \in \bR^{2} \times \bR^{2}$, and will refer to such a stabilization as a {\em standard stabilization}.

\begin{definition}
Let $q:M \times \bR^N \to \bR$ be a fibered function of rigid tube type, so that $q(m,x)=Q_m(x)$ for $Q_m:\bR^N \to \bR$ a family of non-degenerate quadratic forms. Let $E \to M$ be the real vector bundle whose fiber $E_m$ over $m \in M$ is the negative eigenspace of the matrix $(\partial^2 Q_m/\partial x_i\partial  x_j)_{i,j}$. We call $E \to M$ the {\em stable bundle} of $Q$.
\end{definition}

\begin{remark}
Alternatively, a fibered function of ridid tube type is classified by a map $M \to \cQ$, the notion of stable bundle for a non-degenerate quadratic form determines a map $\cQ_{k,N} \to BO(k,N)$, and hence the stable bundle of a fibered function of ridid tube type is classified by the composition $M \to \cQ \to \bZ \times BO$. Note that the map $\cQ_{k,N} \to BO(k,N)$ is a homotopy equivalence, with homotopy inverse the map $BO(k,N) \to \cQ_{k,N}$ which sends a vector subspace $E \subset \bR^N$ of dimension $k$ to the quadratic form $Q(x)=\|x_{E^\perp} \|^2-\|x_E \|^2$, where $x=x_E+x_{E^\perp}$ is the decomposition of $x \in \bR^N$ into its $E$ and $E^{\perp}$ components.
\end{remark}

\subsection{Torsion of fibered functions of tube type}

Note that the second condition in Definition \ref{def: quad tube} is equivalent to asking that $g$ is transverse to zero outside of the origin, so for a function of quadratic tube type $g:\bR^N \to \bR$ the set $T(g)=\{g \leq 0 \} \cap S^{N-1}$ is a codimension-zero submanifold of $S^{N-1}$ (a priori only $C^1$, but $C^1$ submanifolds have unique smoothings up to contractible choice of isotopy). By the third condition of Definition \ref{def: quad tube} this submanifold is isotopic in $S^{N-1}$ to $T(q)=\{ q \leq 0 \} \cap S^{N-1}$ for $q$ a non-degenerate quadratic form. If $E^-$ is the negative eigenspace of $q$, then this domain is a tubular neighborhood of the equator $E^- \cap S^{N-1}$ and in particular is diffeomorphic to $S^{k-1} \times D^{N-k}$ for $k$ the index of $q$ and $D^m=\{ \|x\| \leq 1\} \subset \bR^m$ the unit disk. 

\begin{definition}
A {\em{rigid tube}} is the codimension zero submanifold $T(q) \subset S^{N-1}$ for $q$ a non-degenerate quadratic form. The {\em index} of a rigid tube is the index of $q$.
\end{definition}

\begin{definition} A {\em tube} is a codimension zero submanifold $T \subset S^{N-1}$ which is isotopic in $S^{N-1}$ to a rigid tube. The {\em index} of a tube is the index of any rigid tube it is isotopic to.
\end{definition}

{
\begin{figure}[htbp]
\begin{center}
\begin{tikzpicture}[scale=1.5]
\begin{scope}
\clip (1,4) circle[radius=1.6cm];
\draw[dashed,thick](1,4) ellipse[x radius=1.6cm,y radius=.5cm];
\draw[fill,gray!50!white] (1,3.6) ellipse[x radius=1.55cm,y radius=.5cm];
\draw[fill,gray!50!white] (1,3.8) ellipse[x radius=1.7cm,y radius=.5cm];
\draw[fill,white] (1,4) ellipse[x radius=1.6cm,y radius=.5cm];
\begin{scope}
\clip (-.5,3.8) rectangle (2.4,4.8);
\draw[dashed] (1,3.6) ellipse[x radius=1.55cm,y radius=.5cm];
\end{scope}
\draw[fill,white] (.35,3) rectangle (2,4.6);
\draw[fill,gray!50!white] (.34,3.15)..controls(.8,3.1)and(.9,3.8)..(.85,4.2)..controls(.85,4.6)and(1.5,4.6)..(1.35,4.2)..controls (1.3,4)and(1.35,3.2)..(2.03,3.227)--(2.03,3.615)..controls(1.7,3.6) and (1.75,3.9)..(1.85,3.9)..controls (2.1,4) and (2.1,5)..(1,5)..controls (.34,5)and(.34,4.27)..(.4,4.2)..controls (.5,4)and(.47,3.53)..(.34,3.545)--(.34,3.15);
\draw[thick] (1,4) circle[radius=1.6cm];
\end{scope}

\end{tikzpicture}
\caption{A tube $T \subset S^{N-1}$.}
\label{Figure1909}
\end{center}
\end{figure}
}

{
\begin{figure}[htbp]
\begin{center}
\begin{tikzpicture}[scale=.7]
%
\begin{scope}[xshift=9cm]
\clip (0,0) circle[radius=2cm];
\draw[dashed] (0,.3) ellipse[x radius=2cm,y radius=5mm];
\draw[fill,gray!50!white] (0,-.5) ellipse[x radius=2cm,y radius=5mm];
\draw[fill,gray!50!white] (-2,-.5)rectangle(2,.5);
\begin{scope}
\clip (-2,-.5)rectangle(2,.51);
\draw[fill,white] (0,.5) ellipse[x radius=2cm,y radius=5mm];
\end{scope}
\draw[thick] (0,0) circle[radius=2cm];
\end{scope}
\begin{scope}[xshift=9cm, yshift=7cm] 
\draw[thick] (0,0) circle[radius=1.5cm];
\draw[fill,gray!50!white] (1,0)ellipse[x radius=2.5mm,y radius=5mm];
\draw[thick](1,0)ellipse[x radius=2.5mm,y radius=5mm];
\draw[dashed,thick] (-1,0)ellipse[x radius=2.5mm,y radius=5mm];
\end{scope}
{
\begin{scope}[yshift=7cm] 
\begin{scope}
\draw[thick](1,0)ellipse[x radius=2.5mm,y radius=5mm];
\draw[fill,white] (1,-.6) rectangle (1.3,.6);
\draw[dashed,thick] (-1,0)ellipse[x radius=2.5mm,y radius=5mm];
\draw[thick] (0,0) circle[radius=1.5cm];
\end{scope}
\begin{scope}
\clip (1,-1.5)rectangle(2.4,1.5);
\draw[fill,white] (0,.2)..controls (.8,.15) and (1.9,1.2)..(2.3,1.7)--(2.3,-1.7)..controls (1.9,-1.2) and (.8,-.15)..(0,-.2);
\draw[thick] (0,.2)..controls (.8,.15) and (1.9,1.2)..(2.3,1.7);
\draw[thick] (0,-.2)..controls (.8,-.15) and (1.9,-1.2)..(2.3,-1.7);
\end{scope}

\begin{scope}
\draw[thick] (2.3,0) ellipse[x radius=1.4mm, y radius=4mm];
\draw[thick] (2.3,.4)..controls (2,.4) and (1.6,.3)..(1.6,0)..controls (1.6,-.3) and (2,-.4)..(2.3,-.4);
\draw[fill,white] (1.8,-.44)rectangle(1.9,.44);
\draw[thick] (2.3,0) ellipse[x radius=4.5mm,y radius=16mm];
\end{scope}

\begin{scope}
\clip (-2.3,-1.75)rectangle (-1.35,1.75);
\draw[thick] (0,.2)..controls (-.8,.15) and (-1.9,.9)..(-2.3,1.7);
\draw[thick] (0,-.2)..controls (-.8,-.15) and (-1.9,-.9)..(-2.3,-1.7);
\end{scope}

\begin{scope}
\draw[dashed] (-2.3,0) ellipse[x radius=1.4mm, y radius=4mm];
\draw[thick] (-2.3,.4)..controls (-2,.4) and (-1.7,.3)..(-1.7,0)..controls (-1.7,-.3) and (-2,-.4)..(-2.3,-.4);
\clip (-2.45,-.41) rectangle (-2.3,.41);
\draw[thick] (-2.3,0) ellipse[x radius=1.4mm, y radius=4mm];
\end{scope}

\begin{scope}
\draw[thick] (0,2.5) ellipse[x radius=7mm,y radius=2.5mm];
\draw[thick] (-.7,2.5)..controls (-.7,1.7) and (.7,1.7).. (.7,2.5);
\end{scope}

\begin{scope}
\draw[dashed] (0,-2.5) ellipse[x radius=7mm,y radius=2.5mm];
\draw[thick] (-.7,-2.5)..controls (-.7,-1.7) and (.7,-1.7).. (.7,-2.5);
\end{scope}
\clip (-.75,-5)rectangle (.75,-2.5);
\draw[thick] (0,-2.5) ellipse[x radius=7mm,y radius=2.5mm];

\end{scope}
}
{
\begin{scope}
\draw[dashed] (0,0) circle[radius=2cm];

\begin{scope}
\clip (-3.3,-2.61)rectangle(3.3,-.55);
\draw[thick] (0,0)..controls (1.6,0) and (2.9,-1)..(3.3,-2.6);
\draw[thick] (0,0)..controls (-1.6,0) and (-2.9,-1)..(-3.3,-2.6);
\draw[thick] (0,-.5) ellipse[x radius=1.9cm, y radius=5mm];
\draw[dashed] (0,-2.6) ellipse[x radius=33mm,y radius=5mm];
\end{scope}

\begin{scope}
\draw[thick] (0,0.2)..controls (1.7,0.2) and (2.9,.8)..(3.3,2.6);
\draw[thick] (0,0.2)..controls (-1.7,0.2) and (-2.9,.8)..(-3.3,2.6);
\end{scope}

\begin{scope}
\clip (-3.3,-2.6)rectangle (3.3,-3.1);
\draw[thick] (0,-2.6) ellipse[x radius=33mm,y radius=5mm];
\end{scope}

\begin{scope}
\draw[thick] (0,2.6) ellipse[x radius=33mm,y radius=5mm];
\end{scope}

\begin{scope}
\clip (-2.1,-.55)rectangle(2.,.58);
\draw[thick] (0,0) circle[radius=2cm];
\end{scope}

\begin{scope}
\draw[fill,white] (-1,2.1)--(-1,1)--(3.2,2.4)--(-1,2.4)--cycle;
\draw[thick] (-1,2.1)--(-1,1)--(3.2,2.46);
\clip (-1,2.1)--(-1,1)--(3.2,2.4)--(-1,2.4)--cycle;
\draw[thick] (0,0) circle[radius=2cm];
\end{scope}

\draw[thick] (-4.5,2)..controls (-2.8,1)and(-2.8,-1)..(-4.5,-2);

\draw[thick] (4.5,2)..controls (2.8,1)and(2.8,-1)..(4.5,-2);
\draw[thick] (4.53,0) ellipse[x radius=4mm,y radius=2cm];

\begin{scope}[xshift=-5mm]
\clip (-4.5,-2.5) rectangle (-4,-1.5);
\draw[thick] (-4.5,-2) ellipse[x radius=5mm,y radius=2mm];
\end{scope}

\begin{scope}[yshift=4cm,xshift=-5mm]
\clip (-4.5,-2.5) rectangle (-4,-1.5);
\draw[thick] (-4.5,-2) ellipse[x radius=5mm,y radius=2mm];
\end{scope}

\end{scope} 
}
\end{tikzpicture}
\caption{Level sets of standard quadratic forms and corresponding rigid tubes.}
\label{Figure1910}
\end{center}
\end{figure}
}
\begin{definition}
A  {\em{tube bundle}} is a fiber bundle $W \to M$ over a smooth manifold $M$ such that 
\begin{enumerate}
\item $W$ is a codimension zero submanifold of $M \times S^{N-1}$
\item the map $W \to M$ is the restriction to $W$ of the projection $M \times S^{N-1} \to M$.
\item for each $x \in M$ the fiber over $x$ is a  tube $T_x \subset S^{N-1}$.
\end{enumerate}
\end{definition}

If for each $x \in M$ the fiber of a tube bundle $W \to M$ is a rigid tube $T_x=T(q_x) \subset S^{N-1}$, where $q_x:\bR^N \to \bR$ is a family of nondegenerate quadratic forms, then we call $W$ a {\em{rigid tube bundle.}} If $g: M \times \bR^N \to \bR$ is a fibered function of quadratic tube type, then we will denote by $T(g) \to M$ the associated tube bundle $T(g)=\{ g \leq 0\} \cap S^{N-1}$. If $f:M \times \bR^N \to \bR$ is a fibered function of tube type, we will denote $T(f)=T(g)$ the tube bundle associated to the underlying fibered function of quadratic tube type $g$. 

\begin{warning}
If $f:\bR^N \to \bR$ is a function of tube type, then $T(f)$ is not $\{ f \leq 0\} \cap S^{N-1}$. It is $\{ g \leq 0\} \cap S^{N-1}$ for $g(x) = \|x\|^2 \lim_{ \lambda \to \infty} f(\lambda x)$. The same comment applies (fiberwise) for fibered functions of tube type $f:M \times \bR^N \to \bR$ and their associated tube bundle $T(f) \to M$.
\end{warning}

\begin{definition}
\label{def: OrientedTubeBundle}
We will say that a tube bundle $W \to M$ of index $k$ is {\em orientable} if the action of $\pi_1 M$ on the homology of the fiber $H_{k-1}(T;\bZ) \simeq \bZ$ is trivial. 
\end{definition}

If $W \to M$ is an orientable tube bundle, then the action of $\pi_1M$ on the homology $H_*(F)$ of the fiber is trivial and hence in particular unipotent, so we may associate to $W$ its higher Reidemeister torsion classes $\tau_k(W) \in H^{2k}(M;\bR)$. We emphasize that for orientable tube bundles, no unitary local system is needed. As explained in Appendix \ref{sec: higher torsion of bundles}, this causes the classes in degrees congruent to 2 modulo 4 to be automatically trivial. So $\tau_{2k+1}(W)=0$ in $H^{4k+2}(M;\bR)$ and we are left with the classes $\tau_{2k}(W) \in H^{4k}(M;\bR)$. 

Recall that a fibered function of tube type $f:M \times \bR^{N} \to \bR$ has an associated fibered function of quadratic tube type $g:M \times \bR^{N} \to \bR$ and a corresponding tube bundle $W=T(f)=T(g)$ pver $M$.
\begin{definition} Let $f:M \times \bR^N \to \bR$ be a fibered function of tube type, with an associated fibered function of quadratic tube type $g:M \times \bR^N \to \bR$ and a corresponding tube bundle $W=T(g)$. If $W$ is orientable, then $f$ is called an {\em orientable} fibered function of tube type.\end{definition}

To an  oriented fibered function of tube type we may associate to $f$ the higher torsion classes $\tau_k(W)$ of the oriented tube bundle $W=T(g)$. These classes are invariant under homotopy of $f$ through oriented fibered functions of tube type. For $q:M \times \bR^N \to \bR$ a fibered function of rigid tube type, being orientable amounts to asking that the stable bundle $E \to M$ of $q$ is orientable.

Let us record the following basic fact.

\begin{lemma} Let $W_1=T(f_1)$ and $W_2=T(f_2)$ be tube bundles associated to orientable fibered functions of tube type $f_1:M \times \bR^N \to \bR$ and $f_2:M \times \bR^N \to \bR$. If $f_1$ is homotopic to $f_2$ through fibered functions of tube type, then $W_1$ and $W_2$ have the same higher torsion classes. \end{lemma}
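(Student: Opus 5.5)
The plan is to turn a homotopy of fibered functions of tube type into a fiber bundle over $M\times[0,1]$, and then use that higher torsion classes are natural characteristic classes of bundles.

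Let $f_t:M\times\bR^N\to\bR$, $t\in[0,1]$, be a homotopy through fibered functions of tube type. By Definition \ref{def: tube} and the subsequent remark, each $f_t$ determines its fibered function of quadratic tube type $g_t(m,v)=\lim_{\lambda\to\infty}\lambda^{-2}f_t(m,\lambda v)$. We will want $g_t$ to depend continuously on $t$, in the $C^1$ topology on the sphere. This is where the precise topology on $\cT$ matters. The natural choice is the one making the deformation retraction $\cT\to\cT^q$, $f\mapsto g$, continuous, as asserted in the text (``clearly the inclusion $\cT^q\subset\cT$ is an equivalence''). So we take a homotopy in $\cT$ to mean a continuous path in this topology. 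Then $G(m,t,v)=g_t(m,v)$ is a fibered function of quadratic tube type over $M\times[0,1]$.

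Condition (2) of Definition \ref{def: quad tube} gives that $0$ is a regular value of $G|_{M\times[0,1]\times S^{N-1}}$ restricted to each fiber. Hence the fiberwise sublevel sets
$$\mathcal{W}=T(G)=\{G\le 0\}\cap (M\times[0,1]\times S^{N-1})$$
form a (after the $C^1$ smoothing remark, smooth) fiber bundle over $M\times[0,1]$ whose fibers are tubes. Two things need checking. Fiberwise transversality varies continuously in the parameter, so Ehresmann's theorem for manifolds with boundary applies; the fibers are compact, so properness is automatic. Orientability is also preserved: the action of $\pi_1(M\times[0,1])=\pi_1M$ on $H_{k-1}(T;\bZ)$ agrees with its action on $W_1$, which is trivial by hypothesis. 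Therefore $\mathcal{W}$ is an orientable tube bundle over $M\times[0,1]$ restricting to $W_1$ over $M\times\{0\}$ and to $W_2$ over $M\times\{1\}$.

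Next we invoke naturality of higher Reidemeister torsion under pullback (Appendix \ref{sec: higher torsion of bundles}). If $i_s:M\to M\times[0,1]$ is the inclusion at $s$, then $\tau_k(i_s^*\mathcal{W})=i_s^*\tau_k(\mathcal{W})$. The bundle $\mathcal{W}$ is a legitimate torsion pair: its fiber homology carries a trivial, hence unipotent, action, and no local system is needed. Since $i_0$ and $i_1$ are homotopic, $i_0^*=i_1^*$ on cohomology, so $\tau_k(W_1)=\tau_k(W_2)$. Alternatively, $\mathcal{W}$ is trivializable along $[0,1]$, so $W_1\cong W_2$ by a fibered diffeomorphism over $M$, and higher torsion is invariant under such diffeomorphisms.

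The main obstacle is the first step: making sure that passing from $f_t$ to $g_t$ is continuous in $t$, so that $\mathcal{W}$ really is a smooth bundle. If this is not built into the topology, I would first try a uniform estimate. The bound $\|\nabla f_t-\nabla g_t\|\le C$ should hold locally uniformly in $t$, which gives $\lambda^{-1}\nabla f_t(\lambda v)\to\nabla g_t(v)$ uniformly on $S^{N-1}$ and in $t$. That uniform convergence yields continuity of $g_t$ in $C^1(S^{N-1})$.
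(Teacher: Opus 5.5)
Your proposal is correct and is essentially the paper's argument. The paper treats $W_t=T(f_t)$ as an isotopy of codimension-zero submanifolds of $M\times S^{N-1}$, applies isotopy extension to get a fibered diffeomorphism $W_1\cong W_2$, and then uses invariance of higher torsion under fibered diffeomorphism; this is exactly your ``alternative'' ending, and your main ending via pullback naturality over $M\times[0,1]$ is an equivalent repackaging. You are also right to insist on continuity of $t\mapsto g_t$, which needs a topology on $\cT$ that controls behavior at infinity; the paper leaves this implicit.
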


\begin{proof} Such a homotopy $f_t$ yields an isotopy of codimension zero submanifolds $W_t=T(f_t)$ of $M \times S^{N-1}$ and hence by isotopy extension a fibered diffeomorphism $W_1 \simeq W_2$ of tube bundles over $M$. Higher torsion classes are invariant under fibered diffeomorphism. \end{proof}

If $f:M \times \bR^{N_1} \to \bR$ is an orientable fibered function of tube type and $q:M \times \bR^{N_2} \to \bR$ is an orientable fibered function of rigid tube type, then $f \oplus q:M \times \bR^{N_1+N_2} \to \bR$ is an orientable fibered function of tube type. Indeed for $k_1$ the index of $f$ and $k_2$ the index of $q$ we have $\Sigma T(g \oplus q) \simeq \Sigma T(g) \land \Sigma T(q)$ as $S^{k_1+k_2}$-fibrations over $M$ and hence $$H^{k_1+k_2-1}(T(g \oplus q)_m ;\bR) \simeq H^{k_1-1}(T(g)_m;\bR) \otimes H^{k_2-1}(T(q)_m;\bR)$$
Hence if both the rank 1 real vector bundles $H^{k_1-1}(T(g)_m;\bR)$ and $H^{k_2-1}(T(q)_m;\bR)$ over $M$  are trivial, then so is $H^{k_1+k_2-1}(T(g \oplus q)_m ;\bR)$. At this point it is not yet clear how the higher torsion of $T(g)$ and $T(g \oplus q)$ should be related. For this purpose we introduce a computational tool. 

If an orientable fibered function of tube type $f:M \times \bR^{N} \to \bR$ has {\em moderate singularities} (i.e. Morse or Morse birth/death, see Appendix \ref{sec: moderate singularities}), then it is possible to compute the resulting higher torsion classes from the parametrized Morse theory of $f$ directly, and this will be useful to us in what follows. We use the same notation as in the statement of the {\em framing principle} given in Theorem \ref{thm: framing}. We recall briefly the relevant notions:
\begin{enumerate}
\item $\tau_k(f)$ is a cohomology class in $H^{2k}(M;\bR)$ obtained from the family $C_*(f_m)$, $m \in M$, of Morse chain complexes for the fiberwise restrictions $f_m: \bR^{N} \to \bR$, $m \in M$.
\item $\gamma_f$ is a stable real vector bundle over the fiberwise singular set $\Sigma_f \subset M \times \bR^{N}$ formed by the negative eigenspaces of the fiberwise Hessian of $f$. 
\item $\pi_*^{\Sigma f}$ is the push-down operator in cohomology $H^*(\Sigma_f) \to H^*(M)$ induced by $\pi|_{\Sigma_f} : \Sigma_f \to M$.
\item for $E \to X$ a real vector bundle over a manifold $X$, $p_k(E)  \in H^{4k}(M;\bR)$ is the normalized Pontryagin character:
$$p_k(E) = \frac{1}{2}(-1)^k  \zeta(2k+1)ch_{2k}(E\otimes \bC) .$$
\end{enumerate}

\begin{remark}
To simplify issues with signs, from now on we will restrict to the case of even index and ambient dimension a multiple of four. Standard stabilization is by $\bR^4$ with the quadratic form $\|x\|^2-\|y\|^2$, $(x,y) \in \bR^2 \times \bR^2$, which will preserve higher torsion, including its sign. 
\end{remark}

\begin{proposition}\label{prop: stab framing}
Let $f:M \times \bR^{4N} \to \bR$ be an orientable fibered function of tube type with moderate singularities, with associated fibered function of quadratic tube type $g:M \times \bR^{4N} \to \bR$ and tube bundle $W=T(g)$. The tube torsion $\tau(W) = \sum_k \tau_k(W)$ of $W$ may be computed from $f$ by the following formula: 
$$\tau(W) =    -\left(\tau_{2k}(f)  +   \pi_*^{\Sigma f} p_k(\gamma_f)\right).$$
\end{proposition}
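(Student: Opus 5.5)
The plan is to deduce the formula from the framing principle (Theorem \ref{thm: framing}) applied to a suitable compact fiber bundle built from $f$, together with a separate argument that the $\bR^{4N}$ direction does not contribute. First replace $f$ by a model adapted to the tube bundle. Using the deformation retraction $(1-t)f+tg$ and a cutoff, homotope $f$, rel a large compact set containing all fiberwise critical points, to a function that is exactly $g$ outside a ball of radius $R$. This homotopy is through fibered functions of tube type and does not change $\Sigma_f$, $\gamma_f$ or the fiberwise Morse complexes $C_*(f_m)$. Then restrict to the fiberwise sublevel picture. On the fiberwise disk bundle $M\times D^{4N}_R$ the function $f$ is a family of Morse (or birth/death) functions whose boundary behaviour is governed by $g$: near $\partial D_R$ the gradient points inward on $\{g>0\}$ and outward on $\{g<0\}$, and the transition locus is $T(g)\times[R,\infty)$. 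Consequently the fiberwise Morse complex of $f$ computes the relative homology of the pair $(D^{4N},\, T(g)_m\text{-collar})$, which is the homology of the fiberwise cofiber of $W\hookrightarrow M\times D^{4N}$. Equivalently, after adding a collar, it is the fiberwise homology of a compact manifold bundle $Z\to M$ whose boundary is split into an incoming part modeled on $W$ and an outgoing part. This homology has rank one concentrated in degree $k$, and by orientability it carries a trivial $\pi_1M$-action.

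Next, apply the framing principle to $f$ on $Z$ relative to the incoming boundary. It gives that the higher torsion of the relative bundle $(Z,\partial_{\mathrm{in}}Z)$ equals $\tau(f)+\pi^{\Sigma f}_*p(\gamma_f)$, with the convention used in Appendix \ref{sec: higher torsion of bundles}. Since $\tau_{2k+1}$ vanishes for real coefficients, only the classes $\tau_{2k}(f)$ and $p_k$ survive. Then relate $\tau(Z,\partial_{\mathrm{in}}Z)$ to $\tau(W)$ using additivity of higher torsion for the cofibration $W=\partial_{\mathrm{in}}Z\hookrightarrow Z\to (Z,\partial_{\mathrm{in}} Z)$. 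The total space $Z\simeq M\times D^{4N}$ is a trivial disk bundle and has zero torsion, so $\tau(Z,\partial_{\mathrm{in}}Z)=-\tau(W)$, up to the degree-shift sign. This is where the overall minus sign comes from. The remark restricting to even index and ambient dimension a multiple of four is used here to ensure the shift introduces no additional sign: a suspension by an even-dimensional sphere acts on torsion by $+1$.

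The main obstacle is making the Morse-theoretic identification on a non-compact fiber rigorous, because the framing principle is stated for bundles of compact manifolds. The tube-type condition $\|\nabla f-\nabla g\|\le C$ must be converted into genuine boundary conditions so that no critical points escape and the unstable/stable manifold structure near infinity is controlled by $T(g)$. I would first handle the rigid case $f=q$. There $\Sigma_q$ is the zero section, $\gamma_q=E$ is the stable bundle, and $\tau(q)=0$, so the formula reduces to $\tau(T(q))=-p(E)$. This can be checked directly from the sphere-bundle description of $T(q)$ together with the known computation of the torsion of linear sphere bundles (Bökstedt / Igusa). The rigid case then also calibrates the sign conventions, and the general case follows by the relative argument above.
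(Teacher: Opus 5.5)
Your argument is essentially the paper's: take the trivial disk bundle $M\times D^{4N}$ with $W=T(g)$ as the $\partial_0$ part, use additivity $0=\tau_k(M\times D^{4N})=\tau_k(M\times D^{4N},W)+\tau_k(W)$ (Theorem \ref{thm: relative}), and compute the relative torsion from $f$ by the framing principle. That additivity formula carries no degree-shift sign, so your hedge there and the rigid-case calibration via B\"okstedt are unnecessary. One slip: by Euler's identity $x\cdot\nabla g=2g$, the gradient points \emph{outward} on $\{g>0\}$ and \emph{inward} on $\{g<0\}\supset T(g)$ (the reverse of what you wrote); this is exactly what makes $T(g)$ the $\partial_0$ part, so that the fiberwise Morse complex computes $H_*(D^{4N},T(g)_m)$, as you then correctly assert.
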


\begin{proof}
The discussion of relative higher torsion in Appendix \ref{sec: rel torsion} applies to the trivial disk bundle $D^{4N} \to X \to M$, i.e. $X=M \times D^{4N}$, and the sub-bundle of the sphere bundle $S^{4N-1} \to \partial W \to M$ given by $W = T(g) \subset M \times S^{4N-1}$. Since $X$ is trivial we have $0 = \tau_k(X) = \tau_k(X,W) + \tau_k(W)$ so that $\tau_k(W) = - \tau_k(X,W)$. The proof is them completed by applying the framing principle Theorem \ref{thm: framing} to the bundle pair $(X,W)$, since we may compute the relative higher torsion $\tau_k(X,W)$ using the function with moderate singularities $f$. 
\end{proof}

\begin{proposition}\label{prop: effect stab}
Let $f:M \times \bR^{4N_1} \to \bR$ be an orientable  fibered  function of tube type and let $q:M \times \bR^{4N_2} \to \bR$ be a constant fibered function of rigid tube type, i.e. $q(m,v)=Q(v)$ for $Q:\bR^{4N_2} \to \bR$ a fixed non-degenerate quadratic form of even index. The higher  torsions of the tube bundles $T(f)$ and $T(g \oplus q)$ are equal:
$$ \tau_k(T(f \oplus q) ) = \tau_k(T(f))  $$
\end{proposition}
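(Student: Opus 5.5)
We will compute both sides with Proposition \ref{prop: stab framing}, after putting $f$ in a convenient form. Higher torsion of $T(f)$ depends only on the homotopy class of $f$ through orientable fibered functions of tube type (the Lemma above). So we may first deform $f$ by a small perturbation, staying within tube type since the perturbation has compact support and bounded gradient, to a fibered function with moderate singularities. This uses the existence of such perturbations from Appendix \ref{sec: moderate singularities}, or Igusa's theorem that generic families can be made to have only Morse and birth/death singularities after stabilization. If stabilization is needed for this step, we absorb it into $q$ or treat it by the same computation below. Since $q$ is homotopic through constant nondegenerate forms to the standard one of the same index, and $T(f\oplus q)$ varies by fibered diffeomorphism under this homotopy, we may also assume $Q(x,y)=\|x\|^2-\|y\|^2$ on $\bR^{4N_2-2j}\times\bR^{2j}$, with even index $2j$.

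With $f$ having moderate singularities, $f\oplus q$ also has moderate singularities. Its fiberwise singular set is $\Sigma_{f\oplus q}=\Sigma_f\times\{0\}$, and its negative eigenbundle is $\gamma_{f\oplus q}=\gamma_f\oplus\underline{\bR}^{2j}$. Because $p_k$ is built from the Chern character, which is additive and vanishes in positive degrees on trivial bundles, we get $p_k(\gamma_{f\oplus q})=p_k(\gamma_f)$. The push-forwards $\pi_*^{\Sigma}$ agree since the two singular sets are identified over $M$. Proposition \ref{prop: stab framing} then reduces the claim to
$$\tau_{2k}(f\oplus q)=\tau_{2k}(f).$$

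The remaining step compares the Morse-theoretic torsions. The fiberwise Morse complex satisfies $C_*(f_m\oplus Q)=C_*(f_m)\otimes C_*(Q)$. Here $C_*(Q)$ is a single generator in degree $2j$, so the new complex is just a degree shift by the even number $2j$, with the same generators and the same differentials. The orientation and sign conventions agree because the shift is even. The family of based complexes therefore gives the same map $M\to \text{Wh}^h(\bC,1)$ up to the even suspension, which is homotopic to the identity on the Whitehead space, and hence the same class $\tau_{2k}$.

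The main obstacle is checking this last point carefully with the definitions of Appendix \ref{sec: higher torsion of bundles}. One must confirm that the construction of $\tau_{2k}(f)$ from the family of Morse complexes is invariant under an even degree shift, including the handling of birth/death points and the choice of basis signs. The alternative is to argue directly that a constant rigid stabilization is a fiberwise product with a fixed sphere-pair, so that the relative torsion $\tau_k(X,W)$ is unchanged. I would try the Morse-complex argument first, citing stability of the Whitehead space under even suspension.
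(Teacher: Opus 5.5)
Your proposal is correct and follows essentially the paper's own proof. Both replace $f$ by a function with moderate singularities that agrees with $f$ outside a compact set, apply Proposition \ref{prop: stab framing} to $f$ and to $f\oplus q$, and observe that an even constant stabilization leaves both the stable bundle term and the Morse-theoretic term $\tau_{2k}$ unchanged. One correction: the moderate representative should come from the framed function theorem (Theorem \ref{thm: framed function}), not from a small generic perturbation, since generic families have unclassifiable fiberwise singularities. That theorem needs no preliminary stabilization, which matters: absorbing such a stabilization into $q$ would make the argument circular.
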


\begin{proof}
By the framed function Theorem \ref{thm: framed function} there exists a function $h:M \times \bR^{4N_1} \to \bR$ which is equal to $f$ outside of a compact set and which has moderate singularities. By Proposition \ref{prop: stab framing} we may compute the torsion of $T(f)=T(h)$ using $h$. Both $\tau_{2k}(h)$ and the stable bundle term $\gamma_h$ are unchanged after stabilization, see Appendix \ref{sec: higher torsion of bundles}. The conclusion follows.
\end{proof}

In particular, the classes $\tau_k(W)$ of $W=T(f)$ are invariant under stabilization of $f:M \times \bR^{4N} \to \bR$ by the standard quadratic form $\|x\|^2-\|y\|^2$ on $\bR^2\times \bR^2$, which has index 2. More generally, we have:

\begin{proposition}\label{prop: effect twisted stab}
Let $f:M \times \bR^{4N_1} \to \bR$ be an orientable  fibered  function of tube type of even index and let $q:M \times \bR^{4N_2} \to \bR$ be an orientable fibered function of rigid tube type of even index. The higher torsions of the tube bundles $T(f)$ and $T(f \oplus q)$are related by the formula
$$ \tau_k(T(f \oplus q) ) = \tau_k(T(f))  - p_k(E) $$
where $E \to M$ is the stable bundle of $q$.
\end{proposition}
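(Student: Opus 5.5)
We follow the same strategy as in the proof of Proposition \ref{prop: effect stab}, using the framing principle formula of Proposition \ref{prop: stab framing}. First, by the framed function Theorem \ref{thm: framed function}, we replace $f$ by a fibered function $h:M \times \bR^{4N_1} \to \bR$ which agrees with $f$ outside a compact set and has moderate singularities. Then $T(h)=T(f)$, and $h \oplus q$ agrees with $f\oplus q$ outside a compact set, so $T(h\oplus q)=T(f\oplus q)$. Since $q$ is fiberwise a nondegenerate quadratic form, $h \oplus q$ again has moderate singularities. Its fiberwise singular set is $\Sigma_{h\oplus q} = \Sigma_h \times \{0\} \cong \Sigma_h$, with the same projection to $M$. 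Hence the push-down operators coincide, $\pi_*^{\Sigma (h\oplus q)} = \pi_*^{\Sigma h}$.

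Next we compare the two terms in Proposition \ref{prop: stab framing}. For the stable bundle, the fiberwise Hessian of $h\oplus q$ at a point of $\Sigma_h\times\{0\}$ is the block sum of the Hessians of $h$ and of $Q_m$. So
$$\gamma_{h\oplus q} = \gamma_h \oplus \pi^*E,$$
where $\pi:\Sigma_h \to M$ is the projection and $E$ is the stable bundle of $q$. Since $p_k$ is a component of the Chern character, it is additive: $p_k(\gamma_h\oplus\pi^*E)=p_k(\gamma_h)+\pi^*p_k(E)$. By the projection formula,
$$\pi_*^{\Sigma h}\pi^*p_k(E) = \pi_*^{\Sigma h}(1)\cdot p_k(E) = \chi\, p_k(E),$$
where $\chi$ is the degree of $\Sigma_h \to M$, i.e. the fiberwise Euler characteristic of the Morse complexes. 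It equals $\pm1$ (the fiberwise homology is rank one in degree equal to the index), and it is $+1$ because the index is even. This is where the parity assumptions enter.

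For the Morse complex term, the complex $C_*((h\oplus q)_m)$ is $C_*(h_m)$ shifted in degree by the index of $q$. The shift is even, so the signs are unchanged. The orientation/coefficient system is tensored with the orientation line of $E$, which is trivial since $q$ is orientable. In the Morse-theoretic construction of $\tau_{2k}$ (Appendix \ref{sec: higher torsion of bundles}), an even shift with trivialized coefficient line gives the same map $M \to \text{Wh}^h(\bC,1)$ up to homotopy. Hence $\tau_{2k}(h\oplus q)=\tau_{2k}(h)$. This step is the same invariance already used in Proposition \ref{prop: effect stab}, now for a twisted shift. The main point to verify is that a nonconstant but orientable $E$ does not affect the family of Morse complexes beyond the orientation twist; I expect this to follow from the appendix's description, since the basis elements are only rescaled by the trivialized orientation line.

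Substituting these into Proposition \ref{prop: stab framing} gives
$$\tau(T(f\oplus q)) = -\bigl(\tau_{2k}(h) + \pi_*^{\Sigma h}p_k(\gamma_h) + p_k(E)\bigr) = \tau(T(f)) - p_k(E),$$
which is the claimed formula.
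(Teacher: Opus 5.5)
Your proposal is correct and follows essentially the same route as the paper. You replace $f$ by a function $h$ with moderate singularities that agrees with $f$ outside a compact set, use $\gamma_{h\oplus q}=\gamma_h\oplus\pi^*E$ together with $\pi_*^{\Sigma h}\pi^*=\chi=+1$, and apply Proposition~\ref{prop: stab framing} to both $h$ and $h\oplus q$.

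Your explicit check that $\tau_{2k}(h\oplus q)=\tau_{2k}(h)$ (even degree shift, orientable $E$) spells out a step the paper leaves implicit. One small addition: the identification $\pi_*^{\Sigma(h\oplus q)}=\pi_*^{\Sigma h}$ also uses that the index of $q$ is even, because the push-down carries the sign $(-1)^i$ on the index-$i$ stratum.
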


\begin{proof}
By the framed function Theorem \ref{thm: framed function} there exists a function $h:M \times \bR^{4N_1} \to \bR$ which is equal to $f$ outside of a compact set and which has moderate singularities  Note that $\gamma_{h \oplus q} = \gamma_h \oplus (\pi|_{\Sigma h} )^*E$ as stable real vector bundles over the fiberwise critical set $\Sigma_h \subset M \times \bR^{N_1}$ and hence we have $$\pi_*^{\Sigma h}ch_{2k}(\gamma_{h \oplus q} \otimes \bC ) =  \pi_*^{\Sigma h}  ch_{2k}( \gamma_h  \otimes \bC) + \pi_*^{\Sigma h} \pi^* ch_{2k}(E \otimes \bC). $$
The proof is completed by applying Proposition \ref{prop: stab framing} to $h$ and $h \oplus q$ while noting that $ \pi_*^{\Sigma h} \pi^* $ is multiplication by the Euler characteristic of the fibrewise Morse complex which is $+1$ (since the index is even).
\end{proof}




\subsection{Generating functions of tube type}

\begin{definition}
A generating function $f:W \to \bR$ for a Legendrian $\Lambda \subset J^1M$ is a {\em{generating function of tube type}} if $W=M \times \bR^N$ and $f:M \times \bR^N \to \bR$ is a fibered function of tube type. 
\end{definition}

\begin{lemma}\label{lem: homotopy lifting}
Suppose a closed Legendrian $\Lambda \subset J^1M$ is generated by a fibered function of tube type $f:M \times \bR^N \to \bR$, and let $\Lambda_t \subset J^1M$ be a Legendrian isotopy. Then there exists a homotopy of fibered functions of tube type $f_t:M \times \bR^{N} \times \bR^{4k} \to \bR$ such that:
\begin{enumerate}
\item $f_t$ generates $\Lambda_t$
\item $f_0(m,x,u,v)=f(m,x)+Q(u,v)$ for the standard quadratic form $Q(u,v)=\|u\|^2-\|v\|^2$, where $(m,x) \in M \times \bR^N$ and $(u,v) \in \bR^{2k} \times \bR^{2k}$.
\item $f_t(m,x,u,v)=f(m,x)+\ve_t(m,x,u,v) + Q(u,v)$ for $\ve_t$ a compactly supported function. 
\end{enumerate}
\end{lemma}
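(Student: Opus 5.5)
The plan is to reduce to the classical homotopy lifting property for generating functions under Legendrian isotopy, in the form given in \cite{EG98}, and then to check that the output stays of tube type. This works because the modifications it produces are compactly supported.

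First, I would realize the Legendrian isotopy $\Lambda_t$ by a contact isotopy $\varphi_t$ of $J^1M$ with compact support. This is possible since $\Lambda$ is closed and a Legendrian isotopy of a closed Legendrian extends to an ambient compactly supported contact isotopy. By the usual trick of subdividing $[0,1]$, I may assume each piece of the isotopy is $C^1$-small. It then suffices to lift one small piece at a time, stabilizing once per piece. Stabilizations by the standard form $Q$ compose to a stabilization by $Q$ on $\bR^{4k}$, and a compactly supported perturbation of $f\oplus Q$, once stabilized further, is still a compactly supported perturbation of $f\oplus Q \oplus Q'$. So the pieces concatenate into a single family $f_t$ on $M\times\bR^N\times\bR^{4k}$ satisfying (2) and (3).

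For a single $C^1$-small piece, I would follow Chekanov's composition formula \cite{C96}. The small contact isotopy has a generating family on $J^1M$, given by functions $S_t(m,\xi)$ on a trivial bundle with $\xi\in\bR^{4j}$, equal to a standard quadratic form in $\xi$ outside a compact set. I would then form
$$f_t(m,x,\eta,\xi)=f(m_0,x)+\langle\eta,\,m-m_0\rangle\text{-type composition}+S_t(\ldots),$$
in the form written in \cite{EG98}. After a fiberwise change of coordinates, this becomes $f(m,x)+Q(u,v)+\ve_t(m,x,u,v)$ with $\ve_t$ compactly supported. This step only works because $\varphi_t$ has compact support and $\Lambda$ is compact: the difference between the composed function and $f\oplus Q$ vanishes wherever the contact isotopy is the identity, and a priori that region is only large in the $(m,z)$ directions, not in the fiber directions. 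The needed extra input is that outside a compact set of $M\times\bR^N$, $|d_v f|$ is bounded below. This holds because $f$ is of tube type: $g$ is $2$-homogeneous with no critical points besides the origin, so $|\nabla g|\ge c|x|$ on large spheres, and $\|\nabla f-\nabla g\|\le C$. Hence for large $|x|$ the fiber derivative of $f$ is large, and the points of $\Sigma_f$ and its deformations stay in a compact region, so $\ve_t$ can be cut off there.

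Finally, I would check that $f_t$ is a fibered function of tube type. Its asymptotic quadratic function is $g\oplus Q$, because adding a compactly supported $\ve_t$ changes $\nabla f_t$ by a bounded amount. Moreover, $g\oplus Q$ is of quadratic tube type, since $\cT^q$ is closed under $\oplus$ and $Q\in\cQ$. Transversality $d_v f_t\pitchfork 0$ and the fact that $f_t$ generates $\Lambda_t$ come from the Chekanov/\cite{EG98} formula.

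The main obstacle is the compact-support claim for $\ve_t$, and specifically uniform control over $m\in M$ of the region where the composition formula differs from $f\oplus Q$. I would first try to prove this via the bound $|d_vf|\ge c|x|-C$ together with a cutoff argument. If a cutoff there breaks transversality, I would fall back on the ``fibration at infinity'' version of homotopy lifting in \cite{EG98}, which is designed for exactly this kind of asymptotics.
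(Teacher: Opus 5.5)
Your proposal follows the paper's route. The paper's proof is just an appeal to the homotopy lifting theorem for generating functions, Theorem 4.1.1 of \cite{EG98}, which is your main reduction and your fallback. Your extra checks are correct and make explicit what the paper leaves implicit: tube type gives $|d_v f|\geq c|x|-C$, so $f$ is a fibration at infinity and that theorem applies; and a compactly supported $\ve_t$ keeps $f_t$ of tube type with asymptotic function $g\oplus Q$.

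One slip in your concatenation sketch needs a small fix. Stabilizing a compactly supported $\ve(m,x,u,v)$ by a further form $Q'(u',v')$ gives support $\mathrm{supp}\,\ve\times\bR^{4k'}$, which is not compact. You would therefore need a wide cutoff $\chi_R$ in $(u',v')$; this is harmless, since $|\nabla Q'|$ dominates $|\ve|\,|\nabla\chi_R|$ for large $R$. Alternatively, lift the whole isotopy at once, as \cite{EG98} does.
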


\begin{proof} This follows immediately from the general form of the homotopy lifting property for generating functions as is formulated in Theorem 4.1.1. of \cite{EG98}. \end{proof} 

\begin{remark}\label{rem: monoid act} Note that if $\Lambda \subset J^1M$ is generated by a fibered function of tube type $f:M \times \bR^{4N_1} \to \bR$, then any twisted stabilization $f \oplus q : \bR^{4N_1+4N_2} \to \bR$ of $f$ by a fibered function of rigid tube type $q:M \times \bR^{4N_2} \to \bR$ will also generate $\Lambda$. Hence the monoid of fibered functions of rigid tube type over $M$ acts on the space of generating functions of tube type for the fixed Legendrian $\Lambda$.  \end{remark}

\subsection{The Legendrian invariant}

Let $\Lambda \subset J^1M$ be a closed Legendrian. Denote by $\cT(\Lambda)$ the collection of all tube bundles $W \to M$ of the form $W=T(f)$ for $f:M \times \bR^{4N} \to \bR$ an orientable fibered function of tube type of even index which is also a generating function for $\Lambda$.

\begin{definition}\label{def: tube torsion}
We define the Legendrian tube torsion of $\Lambda$, or {\em tube torsion} for short, to be the subset $\tau(\Lambda)\subset H^*(M;\bR)$ consisting of all {\em negatives} of higher torsion classes $-\tau_k(W) \in H^{2k}(M;\bR)$ for $W \to M$ an orientable tube bundle  $W\in  \cT(\Lambda)$. 
\end{definition}


\begin{remark}
The reason for defining $\tau_k(\Lambda)$ to consist of the negatives $-\tau_k(W)$ instead of $\tau_k(W)$ is  Proposition \ref{prop: stab framing}.
\end{remark}

\begin{proof}[Proof of Theorem \ref{thm: main 1}]
We prove one property at a time:
\begin{enumerate}
\item That the tube torsion $\tau(\Lambda)$ of $\Lambda$ is invariant under Legendrian isotopies of $\Lambda$ follows from Lemma \ref{lem: homotopy lifting} and Proposition \ref{prop: effect stab}. 
\item That $\tau(\Lambda)$ is always a (possibly empty) union of cosets of $p$ follows from Remark \ref{rem: monoid act} and Proposition \ref{prop: effect twisted stab}. 
\item That the tube torsion of the zero section $M \subset J^1 M$ is the image of $p$ follows from Proposition \ref{prop: stab framing}, since the parametrized Morse theory of a generating function $f:M \times \bR^{4N} \to \bR$ for the zero section is constant, so $\tau_{2k}(f)=0$, and hence the only term to contribute is $p_k(\gamma_f)$. Note that $\gamma_f$ may be any stable real vector bundle over $\Sigma_f \simeq M$ as any family of non-degenerate quadratic forms $Q_m:\bR^{4N} \to \bR$, $m \in M$, generates the zero-section. 
\end{enumerate}
\end{proof}

\begin{proposition}\label{prop: or aut} Suppose that a Legendrian $\Lambda \subset J^1M$ admits a generating function of tube type. Then it admits an orientable generating function of tube type. \end{proposition}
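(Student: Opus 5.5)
Given a generating function of tube type $f:M \times \bR^N \to \bR$ for $\Lambda$ whose tube bundle $W=T(f)$ is not orientable, we will correct the orientation by a twisted stabilization, using Remark \ref{rem: monoid act}. The orientation character of $W$ is a homomorphism $w:\pi_1M \to \{\pm 1\}$, namely the action of $\pi_1 M$ on $H_{k-1}(T;\bZ)\simeq \bZ$. Equivalently, $w$ is the first Stiefel--Whitney class of a real line bundle $\ell \to M$, since $H^1(M;\bZ/2)$ classifies such bundles.

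Take $q:M \times \bR^{N'} \to \bR$ to be a fibered function of rigid tube type whose stable bundle $E$ satisfies $w_1(E)=w$. For instance, choose $E=\ell \oplus \bR^{j}$ inside a trivial bundle $\bR^{N'}$ and set $Q_m(x)=\|x_{E_m^\perp}\|^2-\|x_{E_m}\|^2$, as in the remark following the definition of the stable bundle. We use the computation preceding Proposition \ref{prop: stab framing}: there is an isomorphism $\Sigma T(g\oplus q)\simeq \Sigma T(g)\wedge \Sigma T(q)$ of $S^{k_1+k_2}$-fibrations over $M$. It gives an isomorphism of rank one local systems
$$H^{k_1+k_2-1}(T(g\oplus q)_m)\simeq H^{k_1-1}(T(g)_m)\otimes H^{k_2-1}(T(q)_m).$$
For the rigid tube $T(q)_m\simeq S^{k_2-1}\times D$, the local system $H^{k_2-1}(T(q)_m)$ is the orientation local system of the sphere bundle $S(E)$, which is $w_1(E)=w$. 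The tensor product therefore has monodromy $w\cdot w=1$, so $f\oplus q$ is orientable. By Remark \ref{rem: monoid act} it still generates $\Lambda$.

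Finally, we arrange the normalizations used in the definition of $\cT(\Lambda)$, namely ambient dimension a multiple of four and even index. We choose $j$ so that $N+N'$ is divisible by $4$ and the total index $k+\mathrm{rk}\,E$ is even. This is possible because the trivial summand of $E$ and the positive directions can be padded independently. If needed, we further stabilize by constant rigid forms, which are trivially orientable.

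The main obstacle is the identification of the orientation local system of $T(q)$ with $w_1(E)$, together with the multiplicativity of orientation characters under $\oplus$. The paper only sketched the latter via the suspension smash-product equivalence, and we would verify it fiberwise on rigid models using the homotopy $\cT^q\simeq\cT$.
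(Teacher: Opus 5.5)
Your argument is correct and is essentially the paper's proof: the paper also twists $f$ by a rigid $q$ whose stable bundle is (stably) the line bundle $E_m=H^{k-1}(T(g)_m;\bR)$, which is your $\ell$ with $w_1(\ell)=w$, and uses the same K\"unneth/smash identification to see that the new orientation line bundle is $E\otimes E$, hence orientable. Your extra bookkeeping for the dimension-divisible-by-four and even-index normalizations is a harmless refinement of the paper's choice of an even-index family of forms on $\bR^{4d}$.
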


\begin{proof}
Suppose that a generating function of tube type $f:M \times \bR^{4N} \to \bR$ for $\Lambda$ is not orientable. Let $E  \to M$ denote the rank 1 vector bundle formed by the real degree $k-1$ cohomology of the fiber $F$ of the tube bundle $W \to M$, where $k$ is the index, i.e. $E_m=H^{k-1}(T(g)_m;\bR)$ for $g:M \times \bR^{4N} \to \bR$ the underlying fibered function of quadratic tube type.  Let $Q_m:\bR^{4d} \to \bR$, $m \in M$ be a family of non-degenerate quadratic forms of even index $2r$ whose stable bundle is (stably) isomorphic to $E$. Put $q(m,x)=Q_m(x)$, a fibered function of rigid tube type. Then $f \oplus q$ also generates $\Lambda$, and is orientable because for $m \in M$ we have isomorphisms $$H^{k+r-1}(T(g \oplus q)_m ;\bR) \simeq H^{k-1}(T(g)_m;\bR) \otimes H^{r-1}(T(q)_m;\bR) \simeq E_m \otimes E_m$$ and $E \otimes E$ is orientable. Hence $f \oplus q$ is an orientable generating function of tube type for $\Lambda$. \end{proof} 

For the next result note first that if the front of  Legendrian $\Lambda \subset J^1M$ only has cusp singularities, then any generating function of tube type $f: M \times \bR^{4N} \to \bR$ for $\Lambda$ automatically has moderate singularities. Recollections on functions with moderate singularities are given in Appendix \ref{sec: moderate singularities}. 

\begin{proposition}\label{prop: framing for special Leg}
Let $\Lambda \subset J^1M$ be a Legendrian whose front  $\Lambda \hookrightarrow J^1M \to J^0M$ only has cusp singularities and whose projection to the base $\Lambda \to M$ is a homotopy equivalence. If $\Lambda$ admits a generating function of tube type $f:M \times \bR^{4N_1} \to \bR$, then for a suitable fibered function of rigid tube type $q:M \times \bR^{4N_2} \to \bR$ the direct sum $f \oplus q : M \times \bR^{4N_1+4N_2} \to \bR$ is a frameable generating function of tube type , i.e. such that the stable bundle $\gamma_{f \oplus q}$ is trivial. \end{proposition}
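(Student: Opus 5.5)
The plan is to show that the stable bundle $\gamma_f$ over the fiberwise singular set $\Sigma_f$ is (stably) pulled back from $M$. Once that is known, we twist-stabilize by a rigid fibered function whose stable bundle is the stable inverse of that bundle.

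The identification of $\Sigma_f$ is immediate. Since $f$ generates $\Lambda$, the fiberwise singular set $\Sigma_f=\{d_vf=0\}$ is identified with $\Lambda$. Under this identification the projection $\pi|_{\Sigma_f}$ becomes the front projection composed with $J^0M\to M$, i.e. the map $\Lambda \to M$. By hypothesis this map is a homotopy equivalence $h:\Lambda\to M$. So every stable real vector bundle on $\Sigma_f$ is stably isomorphic to $h^*E'$ for some stable bundle $E'\to M$: take $E'=(h^{-1})^*\gamma_f$ for a homotopy inverse $h^{-1}$, and use homotopy invariance of pullbacks. In particular $\gamma_f\cong (\pi|_{\Sigma_f})^*E'$ stably.

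Next, choose a stable inverse $E''$ of $E'$ on $M$. Since $M$ is compact, we may take $E''$ to be an honest vector bundle of some even rank $2r$ sitting inside a trivial $\bR^{4N_2}$, after enlarging the ambient dimension and adding a trivial summand if needed. Define $q(m,x)=\|x_{E''_m{}^\perp}\|^2-\|x_{E''_m}\|^2$, which is a fibered function of rigid tube type with stable bundle $E''$ (Remark on $\cQ_{k,N}\simeq BO(k,N)$). We can make $f\oplus q$ orientable (of even index) as in Proposition \ref{prop: or aut}: if needed, add one more rigid summand whose stable bundle is a suitable line bundle plus a trivial bundle. This correction must not spoil the stable class, so it is cleaner to first pass to an orientable $f$ via Proposition \ref{prop: or aut} and then run the argument. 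By Remark \ref{rem: monoid act}, $f\oplus q$ still generates $\Lambda$.

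Then, exactly as in the proof of Proposition \ref{prop: effect twisted stab}, the singular set of $f\oplus q$ is $\Sigma_f\times\{0\}$ and $\gamma_{f\oplus q}=\gamma_f\oplus(\pi|_{\Sigma_f})^*E''\cong(\pi|_{\Sigma_f})^*(E'\oplus E'')$. This is stably trivial. The cusp hypothesis guarantees that $f\oplus q$ has moderate singularities, so the frameability notion of the appendix applies.

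I expect the main obstacle to be making sense of $\gamma_f$ across the birth/death locus. There the negative eigenspace of the Hessian jumps in rank on the two sheets meeting at a cusp. One needs the convention from Appendix \ref{sec: moderate singularities} that $\gamma_f$ is a well-defined stable bundle on all of $\Sigma_f$, extending across cusp edges by adding the one-dimensional degenerate direction on the higher-index side. The direct-sum formula must also be checked to hold there, which it does since $q$ is nondegenerate fiberwise. The remaining work is orientation and parity bookkeeping.
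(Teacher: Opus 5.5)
Your proof is correct and is essentially the paper's argument: identify $\Sigma_f$ with $\Lambda$ so that $\pi|_{\Sigma_f}$ is a homotopy equivalence, and write $\gamma_f$ as the pullback of a stable bundle on $M$. Then twist-stabilize by a rigid $q$ whose stable bundle is its stable inverse, so that $\gamma_{f\oplus q}$ is the pullback of a trivial bundle. The detour through Proposition \ref{prop: or aut} to arrange orientability is not needed for this statement, but it is harmless, since the composite is still of the form $f\oplus q$ with $q$ rigid.
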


\begin{proof} We have a canonical diffeomorphism $\Sigma_f \to \Lambda$ under which the projection $\pi:\Lambda \to M$ is the projection $\pi^{\Sigma f}:\Sigma_f \to M$. So $\pi^{\Sigma f}$ is a homotopy equivalence. Hence there exists a real stable bundle $E \to M$ such that $(\pi^{\Sigma f})^*E \simeq \gamma_f$. 

Now let $E' \to M$ be a stable even dimensional real vector bundle such that $E \oplus E'$ is trivial. Let $q:M \times \bR^{4N_2} \to \bR$ be a fibered quadratic generating function of rigid tube type whose stable bundle is isomorphic to $E'$. Then $\gamma_{f \oplus q} \simeq \gamma_f \oplus (\pi^{\Sigma f})^*E' \simeq (\pi^{\Sigma f})^*(E \oplus E')$ is trivial. \end{proof}

Using Proposition \ref{prop: framing for special Leg} we obtain in particular a result which relates $\tau(\Lambda)$ to the parametrized theory of a generating function of tube type with moderate singularties $f$ for $\Lambda$. Concretely, in Appendix \ref{sec: higher torsion of bundles} it is explained how to such an $f$ one obtains cohomology classes $\tau_{2k}(f) \in H^{4k}(M;\bR)$ out of the parametrized Morse complex $C_*(f_m;\bR)$, $m \in M$. 

\begin{proposition}\label{prop: torsion for special Leg}
Let $\Lambda \subset J^1M$ be a Legendrian whose front projection $\Lambda \hookrightarrow J^1M \to J^0M$ only has cusp singularities and whose projection to the base $\Lambda \to M$ is a homotopy equivalence. The higher torsion of $\Lambda$ is the subset $\tau_{2k}(\Lambda) \subset \text{coker}(p_k)$ consisting of the cosets of $\text{im}(p_k)$ with representatives given by the cohomology classes $$\tau_{2k}(f)\in H^{4k}(M;\bR),$$ where $f:M \times \bR^{4N} \to \bR$ ranges through the collection of orientable generating functions of tube type (of even index) for $\Lambda$. 
\end{proposition}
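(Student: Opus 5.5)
We prove the two inclusions of subsets of $\text{coker}(p_k)$ by combining Proposition \ref{prop: stab framing} with Proposition \ref{prop: framing for special Leg}. Since the front of $\Lambda$ has only cusp singularities, every generating function of tube type $f:M\times\bR^{4N}\to\bR$ for $\Lambda$ has moderate singularities. Proposition \ref{prop: stab framing} therefore applies to every orientable, even index such $f$, and gives
$$-\tau_{2k}(T(f)) = \tau_{2k}(f) + \pi_*^{\Sigma f} p_k(\gamma_f).$$
By Definition \ref{def: tube torsion}, the elements of $\tau_{2k}(\Lambda)$ are exactly these classes as $f$ ranges over such generating functions. The claim is thus that modulo $\text{im}(p_k)$ the correction term $\pi_*^{\Sigma f}p_k(\gamma_f)$ can be discarded.

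First step: the correction term is always in $\text{im}(p_k)$. As in the proof of Proposition \ref{prop: framing for special Leg}, we identify $\Sigma_f\cong\Lambda$, under which $\pi^{\Sigma f}$ becomes the projection $\Lambda\to M$, a homotopy equivalence. Hence $\gamma_f\simeq(\pi^{\Sigma f})^*E$ for some stable bundle $E\to M$. Naturality of the Chern character gives $p_k(\gamma_f)=(\pi^{\Sigma f})^*p_k(E)$. By the projection formula,
$$\pi^{\Sigma f}_*(\pi^{\Sigma f})^*p_k(E)=\deg(\pi^{\Sigma f})\,p_k(E).$$
This degree is the Euler characteristic of the fiberwise Morse homology, which is $+1$ for even index tube type, exactly as used in the proof of Proposition \ref{prop: effect twisted stab}. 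So the correction term equals $p_k(E)\in\text{im}(p_k)$. It follows that $-\tau_{2k}(T(f))\equiv\tau_{2k}(f)$ modulo $\text{im}(p_k)$ for every admissible $f$. This shows that $\tau_{2k}(\Lambda)$, viewed in $\text{coker}(p_k)$ via Theorem \ref{thm: main 1}(\ref{item: coset}), is contained in the set of cosets $[\tau_{2k}(f)]$.

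Second step: each coset $[\tau_{2k}(f)]$ actually occurs, and in fact fully. Given $f$, the class $-\tau_{2k}(T(f))$ lies in $\tau_{2k}(\Lambda)$ and differs from $\tau_{2k}(f)$ by an element of $\text{im}(p_k)$. Since $\tau_{2k}(\Lambda)$ is a union of cosets by Theorem \ref{thm: main 1}, the whole coset $\tau_{2k}(f)+\text{im}(p_k)$ is contained in $\tau_{2k}(\Lambda)$. Alternatively, Proposition \ref{prop: framing for special Leg} supplies $q$ with $\gamma_{f\oplus q}$ trivial. For this $q$ the correction term vanishes exactly, and since $\tau_{2k}(f\oplus q)=\tau_{2k}(f)$, the representative $\tau_{2k}(f)$ itself is realized. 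Finally, Proposition \ref{prop: or aut} ensures that restricting to orientable generating functions loses nothing.

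The main obstacle is the degree/projection-formula step. We must check that the push-forward $\pi_*^{\Sigma f}$ over the fiberwise singular set, taken with the orientation conventions of the framing principle (Morse index signs), equals $+\deg$ on pulled-back classes. We also need that the invariance $\tau_{2k}(f\oplus q)=\tau_{2k}(f)$ holds for twisted stabilizations, not only constant ones. For the latter, we would argue that twisted stabilization by even index $q$ changes the Morse complexes $C_*(f_m)$ only by tensoring with a rank one complex in an even degree, with orientable local system. The resulting map to $\text{Wh}^h(\bC,1)$ is then unchanged, as recorded in Appendix \ref{sec: higher torsion of bundles}.
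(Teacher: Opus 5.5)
Your proof is correct and follows the paper's argument. The identity $-\tau_{2k}(T(f))=\tau_{2k}(f)+p_k(E)$, obtained from Proposition \ref{prop: stab framing}, the pullback $\gamma_f\simeq(\pi^{\Sigma f})^*E$, and $\pi^{\Sigma f}_*(\pi^{\Sigma f})^*=\chi=+1$, is exactly the paper's first step. You are also right that this identity alone gives both inclusions, since $\cT(\Lambda)$ is indexed by the same collection of $f$. So the paper's converse via a framing twisted stabilization (your ``alternatively'' route) is redundant, and the invariance $\tau_{2k}(f\oplus q)=\tau_{2k}(f)$ under twisted stabilization that you flagged is not actually needed.
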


\begin{proof}
First we show that for any such $f \times \bR^{4N} \to \bR$ we have $\tau_{2k}(f) \in \tau_{2k}(\Lambda)$. Now, we certainly have that $W=T(f)$ is in $\cT(\Lambda)$ and hence $-\tau_{2k}(W) \in \tau_{2k}(\Lambda)$. By Proposition \ref{prop: stab framing} we have
$$ -\tau_{2k}(W) =\tau_{2k}(f) + (\pi^{\Sigma f})_*p_k(\gamma_f). $$
Since $\pi^{\Sigma f}: \Sigma_f \to M$ is a homotopy equivalence, we have $\gamma_f=(\pi^{\Sigma f})^*E$ for some real vector bundle $E \to M$ and hence
$$ (\pi^{\Sigma f})_*p_k(\gamma_f) = (\pi^{\Sigma f})_*(\pi^{\Sigma f})^*p_k(E)=p_k(E), $$
where the last equality follows from the fact that $(\pi^{\Sigma f})_*(\pi^{\Sigma f})^*$ is multiplication by the Euler characteristic of the fiber, i.e. $+1$. Hence $\tau_{2k}(f)$ is in the same coset of $\text{im}(p_k)$ as $-\tau_k(W)$ and therefore $\tau_{2k}(f) \in \tau_{2k}(\Lambda)$ as desired.

Conversely, let $W \in \cT(\Lambda)$. Given any generating function of tube type $f:M \times \bR^{4N_1} \to \bR$ for $\Lambda$ with $W=T(f)$, Proposition \ref{prop: effect twisted stab} implies that for any fibered function of rigid tube type $q:M \times \bR^{4N_2} \to \bR$ the torsion class $\tau_{2k}(W)$ is equal to the torsion class $\tau_{2k}(T(f \oplus q))$  up to an element of $\text{im}(p_k)$. If we take $q$ so that $f \oplus q$ is frameable as in Proposition \ref{prop: framing for special Leg} then $\tau_{2k}(T(f \oplus q))$ and $\tau_{2k}(f \oplus q)$ are equal (the extra term in Theorem \ref{thm: framing} from the stable bundle vanishes since $\gamma_{f \oplus q}$ is trivial). Finally $\tau_{2k}(f)=\tau_{2k}( f \oplus q)$ because the fiberwise Morse theory is unchanged by stabilization (and the parity of indices is unchanged). Hence $\tau_{2k}(W)$ and $\tau_{2k}(f)$ are in the same coset of $\text{im}(p_k)$ and we are done. 
\end{proof}





\subsection{The space of tubes} \label{sec: SpaceOfTubes}

We briefly compare the model for the space of tubes considered in this article with that of Waldhausen \cite{W82} in order to translate some relevant results from the literature. Recall that $\cT_{N,k}$ is the space of all functions $f:\bR^N \to \bR$ of tube type, see Definition \ref{def: tube}. The space $\cT_{N,k}$ deformation retracts onto the subspace $\cT_{N,k}^q$ of functions $g:\bR^N \to \bR$ of quadratic tube type, see Definition \ref{def: quad tube}. 

Any tube $T \subset S^{N-1}$ has two non-negative integers associated to it, namely the ambient dimension $N$, so that $T \subset S^{N-1}$ is a codimension zero submanifold, and its index $k$, which is the integer $k\geq 0$ such that $T \subset S^{N-1}$ is isotopic to $T(q) \subset S^{N-1}$ for $q:\bR^N \to \bR$ a non-degenerate quadratic form of index $k$. The {\em standard tube} of type $(N,k)$ is $T_{N,k}=T(q_{k,N})=\{q_{k,N} \leq 0\} \cap S^{N-1}$ for $q_{k,N}:\bR^N \to \bR$ the standard index $k$ quadratic form $\|x\|^2-\|y\|^2$, $(x,y) \in \bR^{N-k} \times \bR^k$. By definition a {\em tube} of type $(N,k)$ is a codimension zero submanifold $T \subset S^{N-1}$ which is isotopic to the standard tube of type $(N,k)$. One may consider the space $\widetilde{\cT}^q_{N,k}$ of all tubes of type $(N,k)$, for example using a simplicial space model, or otherwise. 

Any reasonable model for $\widetilde{\cT}^q_{N,k}$ would come equipped with a map $\cT_{N,k}^q \to \widetilde{\cT}^q_{N,k}$ given by $g \mapsto T(g)$ whose fiber may be identified with the space of all functions $h:S^{N-1}\to \bR$ such that $T=\{ h \leq 0 \}$, with  $\partial T = \{ h=0 \}$ a regular level set of $h$. For fixed $T$, the space of such $h$ is convex, hence contractible. Such a function $h$ determines a unique function of quadratic tube type $g(v)=\|v\|^2h(v/\|v\|)$. For any reasonable model the map $\cT_{N,k}^q \to \widetilde{\cT}^q_{N,k}$ is a Serre fibration and hence  one has a weak equivalence $\cT_{N,k}^q \simeq \widetilde{\cT}^q_{N,k}$ and hence also a weak equivalence $\cT_{N,k} \simeq \widetilde{\cT}^q_{N,k}$.  

Waldhausen considered in \cite{W82} the following similar though slightly different notion of tubes in terms of partitions. For $E$ a $k$-dimensional linear subspace of $\bR^{N-2}$, consider the codimension zero submanifold with boundary $T_E \subset \bR^{N-1}$ obtained by attaching to the half-space $\{x_{N-1} \leq 0 \}$ inside $\bR^{N-1}$ a standard $(N-1)$-dimensional index $k$ handle along the unit sphere of $E \subset \{x_{N-1}=0\} = \bR^{N-2}$. More concretely, $T_E$ is the union in $\bR^{N-1}$ of the half-space $\{x_{N-1} \leq 0\}$ and an $\varepsilon$-neighborhood of the $k$-dimensional unit sphere of the $(k+1)$-dimensional subspace of $\bR^{N-1}$ spanned by $E \oplus 0 \subset \bR^{N-1}$ and $\partial / \partial x_{N-1}$, suitably smoothed so that $T_E$ is a smooth codimension zero submanifold with boundary. Waldhausen called such a codimension zero submanifold $T_E \subset \bR^{N-1}$ a {\it rigid tube}. Since the thickening and smoothing are homotopically canonical, for fixed $k$ and $N$ the space of rigid tubes $T_E$ in $\bR^{N-1}$ as above has the homotopy type of $BO(k,N-1)$, the Grassmannian of real $k$-dimensional linear subspaces of $\bR^{N-1}$. 

Waldhausen defined a {\em tube} to be any codimension zero submanifold $T \subset \bR^{N-1}$ which is the image of a rigid tube under a smooth isotopy of $\bR^{N-1}$, fixed outside of a compact set. We will denote the space of tubes in the sense of Waldhausen by $\widetilde{\cT}_{N,k}^p$, where $p$ stands for {\em partition}. We may also form a suitable space of functions of tube type  $f:\bR^{N-1} \to \bR$ in the sense of Waldhausen, (so that in particular  $\{f \leq 0\} \subset \bR^{N-1}$ is a tube in the sense of Waldhausen) and denote it by $\cT_{N,k}^p$. Any reasonable model would again come equipped with an equivalence $\cT_{N,k}^p \to \widetilde{\cT}_{N,k}^p$ given by $f \mapsto \{f \leq 0\}$. See \cite{ACGK} for a precise definition of a space of functions of tube type very close to the space $\cT_{N,k}^p$ described above.

The difference between the model  $\widetilde{\cT}^q_{N,k}$ for the space of tubes considered in this article and the Waldhausen model  $\widetilde{\cT}_{N,k}^p$ in terms of partitions, at the $(N,k)$ level, amounts to requiring the tubes in  $\widetilde{\cT}^q_{N,k}$  to be standard in a fixed $(N-1)$-dimensional closed ball $B \subset S^{N-1}$, say $B=\{x_{N} \geq 0 \} \cap S^{N-1}$. More concretely, with respect to the fixed diffeomorphism $\varphi:D^{N-1} \to B$ given by $$\varphi(x_1,\ldots,x_{N-1})=\left(x_1,\ldots,x_{N-1},\sqrt{1-\sum_{i=1}^{N-1}x_i^2}\right),$$ the precise condition is that a tube $T \subset S^{N-1}$ must intersect $B$ in the half-space $\{x_{N-1} \leq 0\} \cap D^{N-1} \subset D^{N-1}$ with respect to the identification $\varphi$. One may then forget about $T \cap B$ and look at the intersection of $T$ with the other hemisphere $\{x_{N} \leq 0\} \cap S^{N-1}$, which becomes a partition tube in the sense of Waldhausen upon identifying the open hemisphere $\{x_N <0\} \cap S^{N-1}$ with $\bR^{N-1}$ by a fixed radial diffeomorphism $D^{N-1} \xrightarrow{\simeq} \bR^{N-1}$, so that the image of $T \cap \{x_N<0\}$ is standard at infinity as in \cite{W82}.

If we define $\widetilde{\cT}_{N,k}^p\subset \widetilde{\cT}^q_{N,k}$ to be the subspace of $\widetilde{\cT}^q_{N,k}$ consisting of tubes which are standard on $B$ as above we get an map given by inclusion $\widetilde{\cT}_{N,k}^p \to \widetilde{\cT}^q_{N,k}$, see Figure \ref{Figure1908}. 

{
\begin{figure}[htbp]
\begin{center}
\begin{tikzpicture}[scale=.8]
%
\draw (.9,2.4) node{$\longrightarrow$};
\begin{scope}[xshift=-5cm,yshift=-5mm]
\draw[fill, gray!30!white] (0,2.3)--(0,3)--(.5,3)..controls (1.5,3) and (.5,4.5)..(2,4.5)..controls (3.5,4.5) and (2.5,3)..(3.5,3)--(4,3)--(4,2.3)--(0,2.3);
\draw[thick] (0,2.3)--(0,3)--(.5,3)..controls (1.5,3) and (.5,4.5)..(2,4.5)..controls (3.5,4.5) and (2.5,3)..(3.5,3)--(4,3)--(4,2.3)--(0,2.3);
\draw[white,fill] (2,3.5)circle[radius=5mm];
\draw[thick] (2,3.5)circle[radius=5mm];
\end{scope}
\begin{scope}[xshift=-5cm]
\draw (0,0)--(0,5)--(4,5)--(4,0)--cycle;
\end{scope}
%
%
\begin{scope}[xshift=3cm,yshift=-1cm]
\clip (2,3.2) circle[radius=2.5cm];
\draw (-.5,3.5)..controls (-.3,4) and (4.3,4)..(4.5,3.5);

\draw[fill, gray!30!white] (-.4,0)--(-.5,3.5)..controls (-.3,3) and (.5,3)..(.5,3)..controls (1.5,3) and (.5,4.5)..(2,4.5)..controls (3.5,4.5) and (2.5,3)..(3.5,3)..controls (3.5,3) and (4.3,3)..(4.5,3.5)--(4.5,0)--(-.4,0);
\draw[thick] (-.4,0)--(-.5,3.5)..controls (-.3,3) and (.5,3)..(.5,3)..controls (1.5,3) and (.5,4.5)..(2,4.5)..controls (3.5,4.5) and (2.5,3)..(3.5,3)..controls (3.5,3) and (4.3,3)..(4.5,3.5)--(4.5,0)--(-.4,0);

\draw[white,fill] (2,3.5)circle[radius=5mm];
\draw[thick] (2,3.5)circle[radius=5mm];
\draw (2,3.2) circle[radius=2.5cm];

\end{scope}
%
\end{tikzpicture}
\caption{Tube in square and tube in a sphere, yielding the map $\widetilde{\cT}_{N,k}^p \to \widetilde{\cT}^q_{N,k}$.}
\label{Figure1908}
\end{center}
\end{figure}
}

The map $\widetilde{\cT}_{N,k}^p \to \widetilde{\cT}^q_{N,k}$ is not a homotopy equivalence, however by a standard argument the connectivity tends to infinity as you stabilize $N,k \to \infty$ (in the model $\cT_{N,k}^q$ the stabilization is given by direct sum with standard quadratic forms $\|x\|^2-\|y\|^2$, $(x,y) \in \bR^a \times \bR^b$). Indeed, given a tube bundle $W \subset M \times S^{N-1} \to M$ over a compact manifold $M$, so for each $m \in M$ we have a tube $T_m \subset M$. First, since the fiber of the fiber bundle $\partial W \to M$ is diffeomorphic to $S^{k-1} \times S^{N-k-1}$, by increasing $N$ and $k$ we may always find a section $s:M \to \partial W$ of the fibration $\partial W \to M$. Second, as a section of the trivial bundle $M \times S^{N-1}  \to M$ the section $s$ is homotopic through sections to a constant section after increasing $N$ if necessary. By isotopy extension this means we may assume that there is a point, say $x=(0,\ldots,0,1) \in S^{N-1}$ such that $x \in \partial T_m$ for each tube $T_m \subset S^{N-1}$, $m \in M$. Next, note that we may assume that $T_x (\partial T_m) = \bR^{N-2} \times 0 \times 0 \subset \bR^{N-1} \times 0 =  T_xS^{N-1}$ for all $m \in M$. Indeed, again by isotopy extension the only obstruction is linear algebraic and consists of trivializing the outwards pointing normal vector to $\partial T_m$ at $x$, which lives in $[M,S^{N-2}]$ and hence dies if $N$ becomes large enough. At this point it is straightforward to make $W$ standard near an $\varepsilon$-ball about $x \in B \subset S^{N-1}$ and then by a suitably isotopy of $S^{N-1}$ it is then straightforward to make it standard on the entire ball $B \subset S^{N-1}$. If $M$ has boundary this construction all works relative to $\partial M$ and so this proves that the inclusion $\widetilde{\cT}_{N,k}^p\subset \widetilde{\cT}^q_{N,k}$ becomes arbitrarily connected as $N,k \to \infty$.

Therefore we may speak unambiguously of the stable space of tubes $\cT_\infty$, which is the result of stabilizing $N,k\to \infty$, as this stable space is common to both of these models. The monoid $\cT^q=\coprod_{N,k} \cT_{N,k}^q$ consisting of functions of quadratic tube type which we consider in the present article was introduced in \cite{AACK}, and the function space version of $\cT_\infty$ is realized by implementing stabilization of $\cT^q$ as direct sum with suitable standard non-degenerate quadratic forms. The idea of thinking of tubes as codimension zero submanifolds of the sphere had already been introduced in the model of Rognes in \cite{R92}. The advantage of this model is that it is better suited for the monoid (and indeed infinite loopspace) structures at play than the partition model used by Waldhausen. We also have a stable space $\cQ_\infty$ given by stabilizing $N,k \to \infty$ in $\cQ_{N,k}$ in the same way, and the inclusion $\cQ_{k,N} \to \cT_{k,N}$ stabilizes to a map $\cQ_\infty \to \cT_\infty$.


In particular, given a fibered function of tube type over $M$ we get an associated map $M \to \cT_\infty$. If the fibered function is of rigid tube type then we get a lift $M \to \cQ_\infty$ through $Q_\infty \to \cT_\infty$. In this case, we may compose this with the equivalence $\cQ_\infty \to BO$ given by taking the negative eigenspace of a non-degenerate quadratic form, and the resulting map $M \to BO$ classifies the stable bundle of the family of quadratic forms $q_m: \bR^N \to \bR$ comprising the fibered function of rigid tube type. 
Similarly, in the general case of a fibered function of tube type which is not necessarily rigid, one may consider the associated map $M \to \cT_\infty$ and it is shown in \cite{W82} that the homotopy class of this map classifies the fibered function of tube type over $M$ up to stable equivalence.

The map $\cQ_\infty \to \cT_\infty$ called the (stable) {\em rigid tube map}, or more informally we may denote $BO \to \cT_\infty$. It was proved by B\"okstedt that the stable rigid tube map is a rational homotopy equivalence \cite{B82}, a fact which was also proved by the second author using higher Reidemeister torsion in \cite{I02}. For example, for homotopy groups this means that $$\pi_n \cT_\infty \otimes \bQ   \simeq \pi_n BO \otimes \bQ $$ 
which is $\bQ$ if $n$ is of the form $4k$ and is zero otherwise. For $n=4k$, modulo the torsion subgroup, we have a monomorphism of free abelian groups
$$\pi_{4k} BO \xrightarrow{\sim}  (\pi_{4k} \cT_\infty )/\ \text{Tors} $$ 
i.e. up to $\pm 1$ a monomorphism $\bZ \to \bZ$. Let $J_k \in \bZ_{>0}$ be the index of the image. Higher torsion gives an injection $\bZ/J_k\bZ \to \bR/\lambda_k\bZ$ where $\lambda_k = p_k(E)$ for $ E \to S^{4k}$ the generator ok $KO^0(S^{4k})$. So the higher torsion classes are a subtle enough invariant to detect the failure for the rigid tube map $BO \to \cT_\infty$ to be surjective on homotopy groups modulo the  torsion subgroup $\text{Tors} (  \pi_{4k} \cT_\infty )$.

Following \cite{W82}  consider  $\cH_\infty$, the  {\em stable space of h-cobordisms of a point}, denoted $\cH_\infty$, which is a classifying space for fiber bundles $V \to M$ whose fiber is an h-cobordism of a disk $D^k$, i.e. diffeomorphic to $D^k \times [0,1]$ with a fixed trivialization of the lower boundary $D^k \times \{0 \}$, up to stable equivalence. Waldhausen defined a map $\cH_\infty \to \cT_\infty$ given by stacking an h-cobordism of a disk on top of a standard tube, see Figure \ref{Figure1903}.

{
\begin{figure}[htbp]
\begin{center}
\begin{tikzpicture}

\begin{scope}[xshift=1cm,yshift=.25cm]
\draw[thick] (-5,1.75)--(-5,-2.25)--(-2,-2.25)--(-2,1.75)--cycle;
\draw[thick,fill,gray!50!white] (-5,-2.25)--(-5,-.25)..controls (-3.7,-.25) and (-3.7,0.2)..(-4,0.2)..controls (-4.2,.25) and (-4.2,.35)..(-4,.4)..controls (-3.6,.45) and (-3.6,.55)..(-4.2,.6)..controls(-4.3,.7) and (-3.65,.75)..(-3.75,.8)..controls (-3.8,1.1) and(-4,1.1)..(-3.4,1.2)..controls (-3,1.3) and (-2.9,.6).. (-3.2,.3)..controls (-3.5,0) and (-3.1,0)..(-3.4,-.2)..controls (-3.8,-.4) and (-3.8,-.6)..(-3,-.6)..controls (-2.5,-.5) and (-2.9,-.2)..(-2.6,0)..controls (-2.3,0) and (-2.3,-.25)..(-2,-.25)--(-2,-2.25)--cycle;
\end{scope}

\begin{scope}
\clip (1,4) circle[radius=1.6cm];
\draw[fill,gray!50!white] (1,3.6) ellipse[x radius=1.55cm,y radius=.5cm];
\draw[fill,gray!50!white] (1,3.8) ellipse[x radius=1.7cm,y radius=.5cm];
\draw[fill,white] (1,4) ellipse[x radius=1.6cm,y radius=.5cm];
\draw(1,4) ellipse[x radius=1.6cm,y radius=.5cm];
\draw (1,3.6) ellipse[x radius=1.55cm,y radius=.5cm];
\draw[thick] (1,4) circle[radius=1.6cm];
\end{scope}

\begin{scope}[rotate=8, scale=.8,yshift=-1.5cm] 
\begin{scope}[xshift=2.3cm,yshift=-5.2cm,rotate=-8,scale=1.6]
\clip (1,4) circle[radius=1.6cm];
\draw[fill,gray!50!white] (1,3.6) ellipse[x radius=1.55cm,y radius=.5cm];
\draw[fill,gray!50!white] (1,3.8) ellipse[x radius=1.7cm,y radius=.5cm];
\draw[fill,white] (1,4) ellipse[x radius=1.6cm,y radius=.5cm];
\draw(1,4) ellipse[x radius=1.6cm,y radius=.5cm];
\draw (1,3.6) ellipse[x radius=1.55cm,y radius=.5cm];
\draw[thick] (1,4) circle[radius=1.6cm];
\end{scope}
\begin{scope}[xshift=9cm,yshift=.5cm]
\clip (-3.2,-.5) circle[radius=.6cm];
\draw[fill,white] (-5,-.25)..controls (-3.7,-.25) and (-3.7,0.2)..(-4,0.2)..controls (-4.2,.25) and (-4.2,.35)..(-4,.4)..controls (-3.6,.45) and (-3.6,.55)..(-4.2,.6)..controls(-4.3,.7) and (-3.65,.75)..(-3.75,.8)..controls (-3.8,1.1) and(-4,1.1)..(-3.4,1.2)..controls (-3,1.3) and (-2.9,.6).. (-3.2,.3)..controls (-3.5,0) and (-3.1,0)..(-3.4,-.2)..controls (-3.8,-.4) and (-3.8,-.6)..(-3,-.6)..controls (-2.5,-.5) and (-2.9,-.2)..(-2.6,0)..controls (-2.3,0) and (-2.3,-.25)..(-2,-.25)..controls (-3,-.4) and (-4,-.4)..(-5,-.25);
\end{scope}
\begin{scope}[xshift=9cm,yshift=.5cm]
\draw[thick,fill,gray!50!white] (-5,-.25)..controls (-3.7,-.25) and (-3.7,0.2)..(-4,0.2)..controls (-4.2,.25) and (-4.2,.35)..(-4,.4)..controls (-3.6,.45) and (-3.6,.55)..(-4.2,.6)..controls(-4.3,.7) and (-3.65,.75)..(-3.75,.8)..controls (-3.8,1.1) and(-4,1.1)..(-3.4,1.2)..controls (-3,1.3) and (-2.9,.6).. (-3.2,.3)..controls (-3.5,0) and (-3.1,0)..(-3.4,-.2)..controls (-3.8,-.4) and (-3.8,-.6)..(-3,-.6)..controls (-2.5,-.5) and (-2.9,-.2)..(-2.6,0)..controls (-2.3,0) and (-2.3,-.25)..(-2,-.25)..controls (-3,-.85) and (-4,-.85)..(-5,-.25);
\end{scope} 
\end{scope} 

\begin{scope}[xshift=3cm,yshift=4cm]
\draw (0,.2) node[right]{standard rigid tube};
\draw (0,-.2) node[right]{$T(q)$ of index $k$.};
\end{scope}

\end{tikzpicture}
\caption{On the left is an $h$-cobordism of a disk representing an element of $\cH_\infty$ as a partition of $[-1,1]^{n-1}\times[0,1]$. On the right is the tube of index $k$ obtained by pasting the $h$-cobordism into $S^{n}$ on top of the standard rigid tube of index $k$ shown in the middle. This yields a map $\cH_\infty \to \cT_\infty$.}
\label{Figure1903}
\end{center}
\end{figure}
}

Consider also the forgetful map $\cT_\infty \to \cT_\infty^{\text{TOP}}$ in which $ \cT_\infty^{\text{TOP}}$ is the stable space of tubes in the topological category. It was shown by Waldhausen in \cite{W82} that $\cT_\infty^{\text{TOP}} \simeq BG$ for $BG$ the classifying space for spherical fibrations, i.e. $G=\lim_N G_N$ for $G_N$ the space of maps $S^N \to S^N$ of degree $\pm 1$.

\begin{theorem}[\cite{W82}]\label{thm: wald fib} $\cH_\infty \to \cT_\infty \to BG$ is a fibration sequence. \end{theorem}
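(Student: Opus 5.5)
Since this is Waldhausen's theorem, the plan is to reconstruct the argument of \cite{W82} in our model rather than prove something new. The first step is to identify the three spaces as realizations of simplicial categories. These are the categories of codimension zero submanifolds of $\bR^{N-1}$ (equivalently of $S^{N-1}$, standard on the ball $B$) that are standard at infinity. They come in three flavours: smooth tubes in $\cT_{N,k}$; their topological analogues in $\cT^{\text{TOP}}_{N,k}$; and partitions of $\bR^{N-1}$ whose lower part is diffeomorphic rel infinity to the standard half-space with an $h$-cobordism stacked on it, giving $\cH_{N,k}$. Over a $q$-simplex we take fibre bundles of such objects over $\Delta^q$. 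The forgetful map $\cT_\infty \to \cT^{\text{TOP}}_\infty$ then sends a tube to its underlying topological tube. The stacking map of Figure \ref{Figure1903} lands in the homotopy fibre over the basepoint, i.e. over the standard topological tube, because the stacked tube is topologically (though not smoothly) isotopic to the standard one once the $h$-cobordism is absorbed.

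The second step is to show that $\cH_\infty$ maps by a weak equivalence to the homotopy fibre. Equivalently, a smooth tube together with a topological isotopy to the standard tube is, stably and up to contractible choice, the same as an $h$-cobordism of a disk glued onto the standard tube. The plan is as follows. Given such data, use the topological isotopy to place the region between the smooth tube and the standard one. After stabilizing $N,k$, use smoothing theory to arrange that the isotopy is smooth outside a compact region, so the region becomes a smooth compact cobordism between the standard boundary and the new boundary. This cobordism is topologically a product, hence an $h$-cobordism, and it is trivialized on its lower end by construction. Conversely, stacking recovers the tube, and one checks that the two composites are homotopic to the identity on the level of $q$-simplices, working relative to $\partial\Delta^q$. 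This is the main obstacle. The stabilization is essential here: one needs the stable $h$-cobordism space to absorb the difference between the topological and smooth categories, in the spirit of the comparison $\mathrm{TOP}/O$ versus smooth $h$-cobordisms. I would first try to handle it by showing that both sides satisfy a parametrized $h$-cobordism/isotopy extension principle, so that they deloop compatibly.

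The third step identifies $\cT^{\text{TOP}}_\infty \simeq BG$. A topological tube $T$ with complement $S^{N-1}\setminus T$ gives, via the boundary, a spherical fibration: the pair $(S^{N-1}, T)$ fibrewise determines $\Sigma T \simeq S^{k}$ up to homotopy. Stably, every spherical fibration arises this way, by taking regular neighborhoods of mapping cylinders in Euclidean space and using topological engulfing and the high dimension. Injectivity on homotopy follows from the uniqueness of regular neighborhoods in high codimension. Finally, I would verify that stabilization in $N,k$ commutes with all three maps, using the connectivity argument given above for $\widetilde{\cT}^p_{N,k}\subset\widetilde{\cT}^q_{N,k}$. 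Passing to the colimit then yields the fibration sequence $\cH_\infty\to\cT_\infty\to BG$.
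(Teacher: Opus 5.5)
The paper gives no proof of this statement; it is quoted from \cite{W82}. Your sketch therefore has to stand on its own, and its decisive step is not established.

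In Step 2 you need an inverse to the stacking map $\cH_\infty\to\mathrm{hofib}(\cT_\infty\to\cT^{\text{TOP}}_\infty)$, coherently over $\Delta^q$ rel $\partial\Delta^q$. The mechanism you propose cannot supply it.
\begin{itemize}
\item Tubes and their isotopies are already standard outside a compact set, so making the isotopy smooth there accomplishes nothing. The real problem is to find, in families, a \emph{smooth} standard sub-tube of $T$ in the prescribed topological isotopy class.
\item Stable (Kirby--Siebenmann) smoothing theory cannot produce $\cH_\infty$. Its obstructions and choices are governed by $\mathrm{TOP}/O$, whose homotopy groups are finite, so any fibre described that way is rationally trivial. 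But by B\"okstedt's theorem (recalled in the paper) $\pi_{4k}\cT_\infty\otimes\bQ\cong\bQ$, and $BG$ is rationally trivial. Hence the fibre you want has rational homotopy in every degree $4k$.
\item That rational information only appears in unstable smoothing theory, e.g.\ Morlet's equivalence $\mathrm{Diff}_\partial(D^n)\simeq\Omega^{n+1}(\mathrm{TOP}(n)/O(n))$. This is the smooth pseudoisotopy theory of disks, which is essentially what the theorem asserts, so invoking it without saying exactly how is circular.
\item Even granting a smooth nested standard sub-tube $T_0'\subset\mathrm{int}\,T$, the region $T\setminus\mathrm{int}\,T_0'$ is an $h$-cobordism on $\partial T_0'$. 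That manifold is highly connected but not contractible, so this is not an $h$-cobordism of a disk. To land in $\cH_\infty$ you would also need the map from the $h$-cobordism space of a disk to that of $\partial T_0'$ to become highly connected as $N,k\to\infty$. That is, you need the homotopy invariance and connectivity estimates for stable $h$-cobordism spaces, which you never invoke.
\item ``One checks that the two composites are homotopic to the identity'' is the entire content of the theorem. The suggested remedy, that both sides ``deloop compatibly'', is not an argument.
\end{itemize}

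Steps 1 and 3 are also missing their real input.

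For the stacking map even to land in the homotopy fibre, you need a topological isotopy from a stacked tube to the standard one, chosen coherently in families. This comes from the contractibility of the topological $h$-cobordism space of a disk (the Alexander trick), which you should state. Note also that a single stacked tube is already smoothly isotopic to the standard one, so all the content is parametric.

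In Step 3, regular neighbourhoods do not behave fibrewise. A regular neighbourhood of a family of mapping cylinders is not a family of regular neighbourhoods, and at best yields a block family of tubes. Promoting block families to genuine tube bundles, and making the uniqueness statement parametric, is again controlled by concordance/$h$-cobordism spaces of the fibre. In the topological category these vanish, once more by the Alexander trick. That fact, not engulfing or uniqueness of regular neighbourhoods, is the key input for $\cT^{\text{TOP}}_\infty\simeq BG$.

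As it stands, your proposal has the right shape: the fibre of $\cT_\infty\to\cT^{\text{TOP}}_\infty$, plus the identification $\cT^{\text{TOP}}_\infty\simeq BG$. But it proves neither of the two identifications.
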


Consider the map $BG \to B (\bZ/2)$ induced by the degree map $G \to \bZ/2$. To a tube bundle $W \to M$, with stable equivalence class classified by a map $M \to \cT_\infty$, is associated a map $M \to B (\bZ/2)$. This map is nullhomotopic precisely when $W$ is orientable. Higher torsion is not defined for non-orientable tube bundles, i.e. higher torsion is a characteristic class coming from the cohomology of the fiber of the map $\cT_\infty \to B(\bZ/2)$. Note that the composition $\cH_\infty \to \cT_\infty \to BG \to B(\bZ/2)$ is trivial and so tube bundles which come from h-cobordism bundles are automatically orientable. 

We also note that the higher torsion of a tube bundle changes sign under stabilization by the standard form $x^2-y^2$ on $\bR^2$ since the index $k$ changes parity. For this reason it is better to assume that all ambient dimensions are even and always stabilize by a non-degenerate form of even index in an even dimensional ambient space, which always leaves torsion unchanged. With this modification of $\cT_\infty$ (and similarly for $\cH_\infty$ and $\cQ_\infty$), any homotopy class of map $M \to \cT_\infty$ such that the composite $M \to B(\bZ/2)$ is null determines higher torsion classes in $M$. In particular any homotopy class of map $M \to \cH_\infty$ determines higher torsion classes  in $M$. The following observation, which will be our source of examples of Legendrians with non-trivial tube torsion in Section \ref{sec: nontriv} below. 

\begin{proposition} If $V \to M$ is a bundle of h-cobordisms of a disk classified by $M \to \cH_\infty$ and $W \to M$ is the stable tube bundle classified by the composition $M \to \cH_\infty \to  \cT_\infty$, then there holds $\tau_k(V)=\tau_k(W)$.  \end{proposition}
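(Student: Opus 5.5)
The plan is to reduce to the additivity of relative higher torsion, in the same way as Proposition \ref{prop: stab framing}. Represent the stable tube bundle $W$ concretely as in Figure \ref{Figure1903}. Take a standard rigid tube $T=T(q)\subset S^{N-1}$ of even index. The $h$-cobordism bundle $V\to M$ is glued fiberwise into a ball $B\subset S^{N-1}$ along a disk $D\subset\partial T\cap B$, attached along its lower boundary $D^k\times\{0\}$. This gives $W_m=T\cup_{D}V_m$. Since the lower boundary of each fiber of $V$ is trivialized, the gluing region $M\times D$ is a trivial bundle. We then have a fiberwise decomposition
$$W=(M\times T)\cup_{M\times D}V .$$

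The first step is to apply additivity of higher torsion to this decomposition; this is the gluing formula for higher torsion reviewed in Appendix \ref{sec: higher torsion of bundles}. Equivalently, we use the relative torsion $\tau_k(X,Y)$ from Appendix \ref{sec: rel torsion} together with the relation $\tau_k(X)=\tau_k(X,Y)+\tau_k(Y)$. This gives
$$\tau_k(W)=\tau_k(M\times T)+\tau_k(V)-\tau_k(M\times D).$$
The bundles $M\times T$ and $M\times D$ are trivial, so their higher torsion classes vanish, and $\tau_k(W)=\tau_k(V)$. Here $\tau_k(V)$ is understood as the higher torsion of $V$ relative to its trivialized lower boundary, which agrees with the absolute class because that boundary is trivial. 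The coefficient system needed for the gluing formula is unipotent, since $H_*(V_m)\simeq H_*(\mathrm{pt})$ and the fiber homology of the tube is rank one in a single degree with trivial monodromy. This is consistent with the remark that $\cH_\infty\to\cT_\infty\to B(\bZ/2)$ is null.

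The second step checks stable invariance. Stabilizing $N$ and $k$ by even forms of even index does not change either side: for $W$ this is Proposition \ref{prop: effect stab}, and for $V$ it holds because stabilization of $h$-cobordisms is a product with a disk. Hence the equality descends to the classifying maps $M\to\cH_\infty\to\cT_\infty$.

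The main obstacle is justifying additivity in a form that applies to a gluing along a codimension zero piece of the boundary rather than along a whole boundary component. I would try to reduce this to the framing principle. Choose a fiberwise function with moderate singularities on $W$, using Theorem \ref{thm: framed function}, that is standard and critical-point-free on $M\times T$ apart from the handle of $T$. Its critical points then split into those of the trivial tube and those in the $V$-part. The family of Morse complexes becomes a direct sum up to filtration, and $\tau_k$ is additive over filtered families of complexes. The stable bundle terms from the framing principle also add over the two pieces, and they vanish on the trivial part.
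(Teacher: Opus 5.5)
Your proposal is correct, and once the additivity step is reduced to the framing principle as in your last paragraph, it is exactly the paper's argument. The paper glues a moderate-singularity function on $V$ (standard near the top and bottom of each $h$-cobordism) to the standard function on $M\times T$, notes that the resulting parametrized Morse theory on $W$ is the union of that of $V$ and of the constant tube family, and applies Theorem \ref{thm: framing} to both bundles, with the tube part contributing nothing to either the Morse-complex term or the stable-bundle term. The paper never states the gluing formula $\tau_k(W)=\tau_k(M\times T)+\tau_k(V)-\tau_k(M\times D)$ (Theorem \ref{thm: relative} only treats sub-bundles of the boundary), so it skips that step and does the direct computation, which is what your fallback amounts to.
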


\begin{proof} The tube bundle $W$ is obtained by gluing $V$ to the trivial tube bundle $M \times T$ for $T$ the standard tube. By Theorem \ref{thm: framed function} we may choose a function $f:V \to \bR$ with moderate singularities which is standard near the top and bottom of each h-cobordism, i.e. such that on each fiber $D^m \times [0,1]$ the restriction of $f$ is standard near top and bottom. We may glue such a function $f$ to the standard quadratic form $q:M \times T \to \bR$ on the trivial tube bundle $M \times T$  to obtain a function with moderate singularities $h:W \to \bR$ whose paramerized Morse theory consists of the union of that of $f$ and (the constant family) $q$. The proof is then completed by applying the framing principle Theorem \ref{thm: framing} to compute $\tau_k(W)$ using the function $h:W \to \bR$ on the glued up tube bundle and to compute $\tau_k(V)$ using the function $f:V \to \bR$ on the original bundle of h-cobordisms. The two computations yields the same expression because the extra contribution from $q$ to both the parametrized Morse complex and to the stable bundle is trivial.   \end{proof}

\begin{remark} Given a Legendrian $\Lambda \subset J^1M$, consider the subset $\cT(\Lambda) \subset [M,\cT_\infty]$ consisting of homotopy classes of maps classifying a stable tube bundle on which $\Lambda$ admits a generating function. The homotopy lifting theorem for generating functions implies that $\cT(\Lambda)$ is a Legendrian isotopy invariant. The tube torsion classes that we consider in the present article are a cohomological shadow of the a priori finer invariant $\cT(\Lambda)$.  \end{remark}

\begin{remark}
 By the stable parametrized stable h-cobordism theorem of Waldhausen $\cH_\infty$ is closely related to $K(\mathbf{S})$, the algebraic K-theory of the sphere spectrum, i.e. $A(\ast)$, the Waldhausen K-theory of a point. Hence by Theorem \ref{thm: wald fib} the stable space of tubes $\cT_\infty$ is also closely related to $A(\ast)$. As noted in the remark after Proposition 3.1 of \cite{W82}, there is a homotopy cartesian square
$$
\begin{tikzcd} \cT_\infty \ar[r] \ar[d] & BG\ar[d] \\
 \Omega^\infty S^\infty \ar[r]& A( \ast)
\end{tikzcd} $$
\end{remark}

\begin{remark}
There is another important positive integer $I_k \in \bZ_{>0}$, which is closely related to $J_k$. Waldhausen in \cite{W82} more generally discussed a homotopy commutative square where the columns are fibration sequences
$$
\begin{tikzcd}  G/ O\ar[r] \ar[d] & \cH_\infty \ar[d] \\
BO \ar[r] \ar[d] & \cT_\infty  \ar[d]\\
BG \ar[r,"="] & BG 
\end{tikzcd} $$
The vertical column is the $J$-map fibration $G/O \to BO \to BG$ and the top row is the {\em Hatcher-Waldhausen map} $G/O \to \cH_\infty$, which is a rational equivalence by that same theorem of B\"okstedt  \cite{B82}. For $n=4k$, modulo the torsion subgroups, we have a monomorphism of free abelian groups
$$(\pi_{4k} G/O) / \text{Tors} \xrightarrow{\sim} (\pi_{4k} \cH_\infty) / \text{Tors} $$ 
i.e.  up to $\pm 1$ a monomorphism $\bZ \to \bZ$. Let $I_k \in \bZ_{>0}$ be the index of the image. The integers $I_k$ and $J_k$ are thus closely related by the J-homomorphism. From private communication from S. Kupers we understand that in principle a careful comparison of the literature, including a careful comparison of conventions for regulators and for higher torsion classes, could lead to concrete information on the integers $I_k$ and $J_k$. 

The integer $I_k$ is also closely related to the mysterious finite homotopy group $\pi_{4k}\cM_\infty$ of the space $\cM_\infty$ considered by Kragh in \cite{Kra18}, who proved that the Hatcher map extends to a fibration sequence $\cM_\infty \to G/O \to \cH_\infty$ where $\cM_\infty$ is roughly speaking the stable space of functions $\bR^{2N} \to \bR$ equal to the standard quadratic form $\|x\|^2-\|y\|^2$ at infinity and which have a unique and non-degenerate critical point. The space $\cM_\infty$ is related to generating functions for {\em Long nearby Lagrangian knots} in $T^*\bR^k$.  Indeed, in \cite{Kra18} it is shown that if $L \subset T^*\bR^k$ is an exact Lagrangian submanifold agreeing with the zero section $\bR^k \subset T^*\bR^k$ outside of a compact set, then $L$ admits a generating function $f:\bR^k \times \bR^{2N} \to \bR$ which is {\em standard-quadratic} at infinity and hence by restricting to a large sphere in $\bR^k$ one obtains an element of $\pi_{k-1} \cM_\infty$ which is closely related to the torsion invariants considered in the present article. The Hatcher-Waldhausen map $G/O \to \cH_\infty$ was shown to be an infinite loop map by Rognes in \cite{R92}.
\end{remark}

\subsection{Legendrians with nontrivial tube torsion}\label{sec: nontriv}

We are now ready to establish for general $M$ the existence of Legendrians $\Lambda \subset J^1M$ such that the projection $\Lambda \to M$ is homotopic to a diffeomorphism and which have nontrivial tube torsion. In particular this shows that the vanishing of the tube torsion of nearby Lagrangians $L \subset T^*M$ predicted by the nearby Lagrangian conjecture cannot rely solely on the fact that the projection $L \to M$ is a homotopy equivalence (or even from a conjectural expectation that the projection $L \to M$ is homotopic to a diffeomorphism, which is not known).

Recall that given a bundle of h-cobordisms of a disk $V \to M$, the stable equivalence class of the bundle is classified by the homotopy class of a map $M \to \cH_\infty$, see Section \ref{sec: SpaceOfTubes}. By the fibration sequence $\cH_\infty \to \cT_\infty \to BG$ of Waldhausen \cite{W82} there is an associated tube bundle $W \to M$ which geometrically is obtained by attaching $V$ onto a trivial bundle of tubes $M \times T \to M$. 

We recall that Theorem \ref{thm: existence of example} states that given a bundle of h-cobordisms of a disk $V  \to M$, there exists a Legendrian $\Lambda \subset J^1M$ whose projection $\Lambda \to M$ is homotopic to a diffeomorphism and which admits a generating function on the tube bundle $W \to M$ obtained from $V$ via the map $\cH_\infty \to \cT_\infty$. 


\begin{proof}[Proof of Theorem \ref{thm: existence of example}]
We may apply Theorem \ref{thm: wrinkling} to the bundle of $h$-cobordisms $V$, where we take $A$ to consist of $\partial V$ and $g: Op(A) \to \bR$ to be a function on $Op(A)$ without critical points for which the two components of $\partial V$ (corresponding to the boundary components $D^k \times \{0\}$ and $D^k \times \{1\}$  of the fiber $D^k \times [0,1]$) are regular level sets of $g$. To verify the hypothesis of Theorem \ref{thm: wrinkling} that $d_vg$ extends to a non-vanishing section of $\text{Vert}^*$ we may argue as in Proposition 2.5 in \cite{EM00}, though note the attribution there to earlier work of Douady and Laudenbach, as well as to the second author's PhD thesis. 

After applying Theorem \ref{thm: wrinkling} we conclude that there exists a fibered wrinkled function $h:V \to \bR$ such that the lower boundary $\partial_0 V$ is a regular level set for $h$. Hence $h$ can be stitched up with a standard function on the trivial tube bundle $q: M \times T \to \bR$ to obtain a function with moderate singularities $h \# q :W \to \bR$ whose fiberwise critical set $\Sigma_f$ is the disjoint union of $\Sigma_h$ and $\Sigma_q$, where $\Sigma_q \to M$ is a diffeomorphism. Hence $h \# q$ generates a Legendrian $\Lambda \subset J^1M$ which is the disjoint union of finitely many Legendrian spheres $S_i \subset J^1M$, each diffeomorphic to the standard sphere and whose projection $S_i \to M$ is nullhomotopic, and the zero section $M \subset J^1M$, see Figure \ref{Figure1906a}. Note that each $S_i$ may sit as a complicated Legendrian knot in $J^1M$, and the union of the $S_i$ as a complicated Legendrian link, but each front $S_i \to J^1M$ has only cusp singularities (along an equator $C_i \subset S_i$). 

Next, we note using Proposition 3.21 of \cite{ACGK} one may construct a fibered function of tube type $f:M \times \bR^N \to \bR$ such that $W=T(f)$ and which generates the same Legendrian $\Lambda$ (the statement of Proposition 3.21 only states it up to Legendrian isotopy but one may appeal to the homotopy lifting property after further stabilization if necessary). After a further Legendrian isotopy of $\Lambda$, (which againis covered by a homotopy of functions $f$ having the same properties after stabilization if necessary), we may arrange it so the Legendrian $\Lambda$ has cusps of any Maslov index (spin a Reidemeister I move and use this to make pagodas of increasing and decreasing Maslov index at any given point of the graphical component of $\Lambda$, see Figure \ref{Figure1906b}). We can then use a surgery of corank 1 singularities to surger the spheres $S_i$ into the Legendrian $\Lambda$ via connect sum, again covering the deformation by a homotopy of functions $f$ having the same properties, except now the components $S_i$ have been absorbed into the graphical component of the Legendrian, see Figure \ref{Figure1907}. Thus the diffeomorphism type of $\Lambda$ is equal to the connect sum of $M$ with finitely many standard spheres, so the final Legendrian $\Lambda$ is diffeomorphic to $M$ and moreover the projection $M \to \Lambda$ is homotopic to a diffeomorphism.   \end{proof}

{
\begin{figure}[htbp]
\begin{center}
\begin{tikzpicture}[scale=.7]

{
\begin{scope}[yshift=5cm]
\begin{scope}
\draw[thick] (-5,0)--(5,0);
\draw (5,0) node[right]{$M$};
\end{scope}
\begin{scope}[yshift=2cm,xshift=-2.3cm] 
\draw[thick] (0,0)..controls (.6,0) and (1,.8)..(1.5,.8)..controls (2,.7) and (2.5,0.1)..(3,.1);
\draw[thick](0,0)..controls (.5,0)and(.9,-.55)..(1.6,-.6)..controls (2.2,-.6) and (2.6,.1)..(3,.1);
\end{scope}
\begin{scope}[yshift=1.8cm,xshift=-1.3cm] 
\draw[thick] (0,0)..controls (.6,0) and (1,.8)..(1.5,.8)..controls (2,.7) and (2.5,0.1)..(3,.1);
\draw[thick](0,0)..controls (.5,0)and(.9,-.55)..(1.6,-.6)..controls (2.2,-.6) and (2.6,.1)..(3,.1);
\end{scope}
\begin{scope}[yshift=1cm,xshift=-.2cm] 
\draw[thick] (0,0)..controls (.4,0) and (.9,.8)..(1.5,.8)..controls (2,.7) and (2.5,0.1)..(3,.1);
\draw[thick](0,0)..controls (.6,0)and(1.1,-.55)..(1.8,-.6)..controls (2.2,-.6) and (2.6,.1)..(3,.1);
\end{scope}
\begin{scope}[yshift=1.2cm,xshift=-2cm] 
\draw[thick] (0,0)..controls (.6,0) and (1,.6)..(1.8,.7)..controls (2.3,.7) and (3,-0.2)..(3.5,-.2);
\draw[thick](0,0)..controls (.5,0)and(.9,-.55)..(1.6,-.6)..controls (2.2,-.6) and (3,-.2)..(3.5,-.2);
\end{scope}
\end{scope}
}

\end{tikzpicture}
\caption{Proof of Theorem \ref{thm: existence of example}, Step 1: Construct a function on the tube bundle $W$ whose Cerf diagram has a copy of the zero section and a bunch of wrinkles.}
\label{Figure1906a}
\end{center}
\end{figure}
}

{
\begin{figure}[htbp]
\begin{center}
\begin{tikzpicture}[scale=.7]

{
\begin{scope}[xshift=1cm]
\begin{scope}
\draw[thick] (-3.5,0)--(4,0);
\draw (4,0) node[right]{$\Lambda$};
\draw[thick] (-3.5,0)..controls (-4,0) and (-4.4,-.5)..(-4.6,-.5);
\draw[thick] (-4.6,-.5)..controls (-4.2,-.5) and (-3.8,-1)..(-3.3,-1);
\draw[thick] (-3.3,-1)..controls (-3.9,-1) and (-4.3,-1.5)..(-4.5,-1.5);
\draw[thick] (-4.5,-1.5)..controls (-4.1,-1.5) and (-3.8,-2)..(-3.4,-2);
\draw[thick] (-3.4,-2)..controls (-3.9,-2) and (-4.1,-2.5)..(-4.5,-2.5);
\draw[thick] (-4.5,-2.5)..controls (-4.25,-2.5) and (-4,-2.8)..(-3.8,-2.8)..controls (-3.6,-2.8) and (-3.4,-2.6)..(-3.2,-2.6);
\draw[thick](-3.2,-2.6)..controls (-3.5,-2.6) and (-4.2,-2)..(-4.6,-2);
\draw[thick] (-4.6,-2)..controls (-4.3,-2) and (-3.7,-1.5)..(-3.4,-1.5);
\draw[thick] (-3.4,-1.5)..controls (-3.7,-1.5) and (-4.2,-1)..(-4.6,-1);
\draw[thick] (-4.6,-1)..controls (-4.2,-1) and (-3.7,-.5)..(-3.5,-.5);
\end{scope}
\begin{scope}
\draw[thick] (-3.5,-.5)..controls (-3.7,-.5) and (-5.3,.5)..(-5.5,.5);
\draw[thick] (-5.5,.5)..controls (-5.3,.5) and (-4.8,1)..(-4.4,1);
\draw[thick] (-4.4,1)..controls (-4.8,1) and (-5.2,1.5)..(-5.5,1.5);
\draw[thick] (-5.5,1.5)..controls (-5.2,1.5) and (-4.8,2)..(-4.5,2)..controls (-4.7,2) and (-4.9,2.2)..(-5,2.2)..controls (-5.2,2.2) and (-5.4,2)..(-5.6,2);
\draw[thick] (-5.6,2)..controls (-5.4,2) and (-4.8,1.5)..(-4.5,1.5);
\draw[thick] (-4.5,1.5)..controls (-4.8,1.5) and (-5.3,1)..(-5.5,1);
\draw[thick] (-5.5,1)..controls (-5.3,1) and (-4.8,.5)..(-4.4,.5);
\draw[thick] (-4.4,.5)..controls (-4.7,.5) and (-5.5,0)..(-6,0);
\end{scope}
\begin{scope}[yshift=2cm,xshift=-2.3cm] 
\draw[thick] (0,0)..controls (.6,0) and (1,.8)..(1.5,.8)..controls (2,.7) and (2.5,0.1)..(3,.1);
\draw[thick](0,0)..controls (.5,0)and(.9,-.55)..(1.6,-.6)..controls (2.2,-.6) and (2.6,.1)..(3,.1);
\end{scope}
\begin{scope}[yshift=1.8cm,xshift=-1.3cm] 
\draw[thick] (0,0)..controls (.6,0) and (1,.8)..(1.5,.8)..controls (2,.7) and (2.5,0.1)..(3,.1);
\draw[thick](0,0)..controls (.5,0)and(.9,-.55)..(1.6,-.6)..controls (2.2,-.6) and (2.6,.1)..(3,.1);
\end{scope}
\begin{scope}[yshift=1cm,xshift=-.2cm] 
\draw[thick] (0,0)..controls (.4,0) and (.9,.8)..(1.5,.8)..controls (2,.7) and (2.5,0.1)..(3,.1);
\draw[thick](0,0)..controls (.6,0)and(1.1,-.55)..(1.8,-.6)..controls (2.2,-.6) and (2.6,.1)..(3,.1);
\end{scope}
\begin{scope}[yshift=1.2cm,xshift=-2cm] 
\draw[thick] (0,0)..controls (.6,0) and (1,.6)..(1.8,.7)..controls (2.3,.7) and (3,-0.2)..(3.5,-.2);
\draw[thick](0,0)..controls (.5,0)and(.9,-.55)..(1.6,-.6)..controls (2.2,-.6) and (3,-.2)..(3.5,-.2);
\end{scope}
\end{scope}
}
\end{tikzpicture}
\caption{Proof of Theorem \ref{thm: existence of example}, Step 2: Obtain cusps of any Maslov index on the copy of the zero section by spinning Reidemeister I moves.}
\label{Figure1906b}
\end{center}
\end{figure}
}

{
\begin{figure}[htbp]
\begin{center}
\begin{tikzpicture}[scale=.7]
{
\begin{scope}[yshift=7.5cm]
\begin{scope}
\draw[thick] (-3.5,0)--(4,0);
\draw (4,0) node[right]{$\Lambda$};
\draw[thick] (-3.5,0)..controls (-4,0) and (-4.4,-.5)..(-4.6,-.5);
\draw[thick] (-4.6,-.5)..controls (-4.2,-.5) and (-3.8,-1)..(-3.3,-1);
\draw[thick] (-3.3,-1)..controls (-3.9,-1) and (-4.3,-1.5)..(-4.5,-1.5);
\draw[thick] (-4.5,-1.5)..controls (-4.1,-1.5) and (-3.8,-2)..(-3.4,-2);
\draw[thick] (-3.4,-2)..controls (-3.9,-2) and (-4.1,-2.5)..(-4.5,-2.5);
\draw[thick] (-4.5,-2.5)..controls (-4.25,-2.5) and (-4,-2.8)..(-3.8,-2.8)..controls (-3.6,-2.8) and (-3.4,-2.6)..(-3.2,-2.6);
\draw[thick](-3.2,-2.6)..controls (-3.5,-2.6) and (-4.2,-2)..(-4.6,-2);
\draw[thick] (-4.6,-2)..controls (-4.3,-2) and (-3.7,-1.5)..(-3.4,-1.5);
\draw[thick] (-3.4,-1.5)..controls (-3.7,-1.5) and (-4.2,-1)..(-4.6,-1);
\draw[thick] (-4.6,-1)..controls (-4.2,-1) and (-3.7,-.5)..(-3.5,-.5);
\end{scope}
\begin{scope}
\draw[thick] (-3.5,-.5)..controls (-3.7,-.5) and (-5.3,.5)..(-5.5,.5);
\draw[thick] (-5.5,.5)..controls (-5.3,.5) and (-4.8,1)..(-4.4,1);
\draw[thick] (-4.4,1)..controls (-4.8,1) and (-5.2,1.5)..(-5.5,1.5);
\draw[thick] (-5.5,1.5)..controls (-5.2,1.5) and (-4.8,2)..(-4.5,2)..controls (-4.7,2) and (-4.9,2.2)..(-5,2.2)..controls (-5.2,2.2) and (-5.4,2)..(-5.6,2);
\draw[thick] (-5.6,2)..controls (-5.4,2) and (-4.8,1.5)..(-4.5,1.5);
\draw[thick] (-4.5,1.5)..controls (-4.8,1.5) and (-5.3,1)..(-5.5,1);
\draw[thick] (-5.5,1)..controls (-5.3,1) and (-4.8,.5)..(-4.4,.5);
\draw[thick] (-4.4,.5)..controls (-4.7,.5) and (-5.5,0)..(-6,0);
\end{scope}
\begin{scope}[yshift=2cm,xshift=-2.3cm] 
\draw[thick] (0,0)..controls (.6,0) and (1,.8)..(1.5,.8)..controls (2,.7) and (2.5,0.1)..(3,.1);
\draw[thick](0,0)..controls (.5,0)and(.9,-.55)..(1.6,-.6)..controls (2.2,-.6) and (2.6,.1)..(3,.1);
\end{scope}
\begin{scope}[yshift=1.8cm,xshift=-1.3cm] 
\draw[thick] (0,0)..controls (.6,0) and (1,.8)..(1.5,.8)..controls (2,.7) and (2.5,0.1)..(3,.1);
\draw[thick](0,0)..controls (.5,0)and(.9,-.55)..(1.6,-.6)..controls (2.2,-.6) and (2.6,.1)..(3,.1);
\end{scope}
\begin{scope}[yshift=1cm,xshift=-.2cm] 
\draw[thick] (0,0)..controls (.4,0) and (.9,.8)..(1.5,.8)..controls (2,.7) and (2.5,0.1)..(3,.1);
\draw[thick](0,0)..controls (.6,0)and(1.1,-.55)..(1.8,-.6)..controls (2.2,-.6) and (2.6,.1)..(3,.1);
\end{scope}
\begin{scope}[yshift=1.2cm,xshift=-2cm] 
\draw[thick] (0,0)..controls (.6,0) and (1,.6)..(1.8,.7)..controls (2.3,.7) and (3,-0.2)..(3.5,-.2);
\draw[thick](0,0)..controls (.5,0)and(.9,-.55)..(1.6,-.6)..controls (2.2,-.6) and (3,-.2)..(3.5,-.2);
\end{scope}
\draw[dashed] (-3.2,-2.6)..controls (-2.8,-2.6) and (-2.4,1.2)..(-2,1.2);
\draw[dashed] (-4.4,.5)..controls (-3,.5)and (-1,1)..(-.2,1);
\draw[dashed] (-4.5,2)--(-2.3,2);
\draw[dashed] (-4.4,1)..controls (-4,1)and(-1.9,1.8)..(-1.3,1.8);
\end{scope}
}

{
\begin{scope}
\begin{scope}
\draw[thick] (-3.5,0)--(4,0);
\draw (4,0) node[right]{$\Lambda$};
\draw[thick] (-3.5,0)..controls (-4,0) and (-4.4,-.5)..(-4.6,-.5);
\draw[thick] (-4.6,-.5)..controls (-4.2,-.5) and (-3.8,-1)..(-3.3,-1);
\draw[thick] (-3.3,-1)..controls (-3.9,-1) and (-4.3,-1.5)..(-4.5,-1.5);
\draw[thick] (-4.5,-1.5)..controls (-4.1,-1.5) and (-3.8,-2)..(-3.4,-2);
\draw[thick] (-3.4,-2)..controls (-3.9,-2) and (-4.1,-2.5)..(-4.5,-2.5);
\draw[thick] (-4.5,-2.5)..controls (-4.25,-2.5) and (-4,-2.8)..(-3.8,-2.8)..controls (-3.6,-2.8) and (-3.4,-2.65)..(-3.2,-2.65);
\draw[thick](-3.2,-2.5)..controls (-3.5,-2.5) and (-4.2,-2)..(-4.6,-2);
\draw[thick] (-4.6,-2)..controls (-4.3,-2) and (-3.7,-1.5)..(-3.4,-1.5);
\draw[thick] (-3.4,-1.5)..controls (-3.7,-1.5) and (-4.2,-1)..(-4.6,-1);
\draw[thick] (-4.6,-1)..controls (-4.2,-1) and (-3.7,-.5)..(-3.5,-.5);
\end{scope}
\begin{scope}
\draw[thick] (-3.5,-.5)..controls (-3.7,-.5) and (-5.3,.5)..(-5.5,.5);
\draw[thick] (-5.5,.5)..controls (-5.3,.5) and (-4.8,.9)..(-4.4,.9);
\draw[thick] (-4.4,1.05)..controls (-4.8,1.05) and (-5.2,1.5)..(-5.5,1.5);
\draw[thick] (-5.5,1.5)..controls (-5.2,1.5) and (-4.8,1.9)..(-4.5,1.9);
\draw[thick] (-4.5,2.05)..controls (-4.7,2.05) and (-4.9,2.2)..(-5,2.2)..controls (-5.2,2.2) and (-5.4,2)..(-5.6,2);
\draw[thick] (-5.6,2)..controls (-5.4,2) and (-4.8,1.5)..(-4.5,1.5);
\draw[thick] (-4.5,1.5)..controls (-4.8,1.5) and (-5.3,1)..(-5.5,1);
\draw[thick] (-5.5,1)..controls (-5.3,1) and (-4.8,.55)..(-4.4,.55);
\draw[thick] (-4.4,.4)..controls (-4.7,.4) and (-5.5,0)..(-6,0);
\end{scope}
\begin{scope}[yshift=2cm,xshift=-2.3cm] 
\draw[thick] (0,0.05)..controls (.6,0.05) and (1,.8)..(1.5,.8)..controls (2,.7) and (2.5,0.1)..(3,.1);
\draw[thick](0,-.1)..controls (.5,-.1)and(.9,-.55)..(1.6,-.6)..controls (2.2,-.6) and (2.6,.1)..(3,.1);
\end{scope}
\begin{scope}[yshift=1.8cm,xshift=-1.3cm] 
\draw[thick] (0,0.05)..controls (.6,0.05) and (1,.8)..(1.5,.8)..controls (2,.7) and (2.5,0.1)..(3,.1);
\draw[thick](0,-.1)..controls (.5,-.1)and(.9,-.55)..(1.6,-.6)..controls (2.2,-.6) and (2.6,.1)..(3,.1);
\end{scope}
\begin{scope}[yshift=1cm,xshift=-.2cm] 
\draw[thick] (0,0.05)..controls (.4,0.05) and (.9,.8)..(1.5,.8)..controls (2,.7) and (2.5,0.1)..(3,.1);
\draw[thick](0,-.1)..controls (.6,-.1)and(1.1,-.55)..(1.8,-.6)..controls (2.2,-.6) and (2.6,.1)..(3,.1);
\end{scope}
\begin{scope}[yshift=1.2cm,xshift=-2cm] 
\draw[thick] (0,0)..controls (.6,0) and (1,.6)..(1.8,.7)..controls (2.3,.7) and (3,-0.2)..(3.5,-.2);
\draw[thick](0.1,-.15)..controls (.5,-.15)and(.9,-.55)..(1.6,-.6)..controls (2.2,-.6) and (3,-.2)..(3.5,-.2);
\end{scope}
\draw[thick] (-3.2,-2.5)..controls (-2.8,-2.5) and (-2.4,1.2)..(-2,1.2);
\draw[thick] (-3.2,-2.65)..controls (-2.6,-2.65) and (-2.3,1.05)..(-1.9,1.05);
\draw[thick] (-4.4,.55)..controls (-3,.55)and (-1,1.05)..(-.2,1.05);
\draw[thick] (-4.4,.4)..controls (-3,.4)and (-1,.9)..(-.2,.9);
\draw[thick] (-4.5,2.05)--(-2.3,2.05);
\draw[thick] (-4.5,1.9)--(-2.3,1.9);
\draw[thick] (-4.4,1.05)..controls (-4,1.05)and(-1.9,1.85)..(-1.3,1.85);
\draw[thick] (-4.4,.9)..controls (-4,.9)and(-1.9,1.7)..(-1.3,1.7);
\end{scope}
}
\end{tikzpicture}
\caption{Proof of Theorem \ref{thm: existence of example}, Step 3: Surgering the wrinkles into the base.}
\label{Figure1907}
\end{center}
\end{figure}
}

\begin{remark}
Given $g:\bR^N \to \bR$ a function of quadratic tube type, $\nabla g:\bR^N \to \bR^N$ is homogeneous of degree 1 and has degree $\pm 1$, hence may be viewed as a map $S^N \to S^N$. This gives a map $\cT_{k,N} \to G_N$ for $G_N$ the space of self-homotopy equivalences of $S^N$ which stabilizes to a map $D: \cT_\infty \to G$ called the {\em Waldhausen derivative}. Waldhausen proved in \cite{W82} that $D$ factors as a composition $\cT_\infty \to BG \to G$ and this factorization played a major role in the article \cite{AACK}. It is not hard to modify the proof of Theorem \ref{thm: existence of example} using the full power of Theorem 2.3 of \cite{EM00} (rather than its special case Theorem \ref{thm: wrinkling}) to prove that the conclusion of Theorem \ref{thm: existence of example} holds for any tube bundle $W \to M$ with trivial Waldhausen derivative, i.e. for any such $W$ there exists a function of tube type $f:M \times \bR^N \to \bR$ with $W=T(f)$ and which generates a Legendrian $\Lambda \subset J^1M$ whose projection $\Lambda \to M$ is homotopic to a diffeomorphism. This result implies Theorem \ref{thm: existence of example} because if the classifying map $M \to \cT_\infty$ of $W$ lifts to a map $M \to \cH_\infty$ then the Waldhausen derivative $M \to G$ factors as a composition $M \to \cH_\infty \to \cT_\infty \to BG \to G$ and hence in particular factors through the composition of the two consecutive maps of the fibration sequence and thus is canonically nullhomotopic. 

\end{remark}

\section{Nearby Lagrangian torsion}\label{sec: nearby}

\subsection{Tube torsion of nearby Lagrangians}

Recall from Section \ref{sec: stable gauss map} that for a nearby Lagrangian $L \subset T^*M$ to have any chance of admitting any kind of generating function, the stable Gauss map $L \to U/O$ of $L$ must be trivial, i.e. nullhomotopic. As was also discussed in Section \ref{sec: gen fun nearby}, this condition is also sufficient to ensure existence of generating functions of tube type: 

\begin{theorem}[\cite{ACGK}]\label{thm: exist for stat triv}
Let $L \subset T^*M$ be a nearby Lagrangian with trivial stable Gauss map $L \to U/O$. Then $L$ admits a generating function of tube type $f:M \times \bR^N \to \bR$.\end{theorem}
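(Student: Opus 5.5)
This theorem is quoted from \cite{ACGK}, so the plan reconstructs its proof via the doubling trick of \cite{Gui12} rather than deriving it from the preceding sections. The argument has three stages: build a sheaf-theoretic generating object, convert it into an honest function on a tube bundle using the nullhomotopy of the Gauss map, and transport it to the present model of tubes.

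\emph{Sheaf stage.} By \cite{Gui12} and the subsequent work surveyed in \cite{ACGK}, the Legendrian lift $\Lambda\subset J^1M$ of $L$ has a simple sheaf quantization. Its microsupport is the cone over $\Lambda$, and its stalks over $M\times\bR$ are locally constant, with a single step of rank one across $\Lambda$. The projection $L\to M$ is a homotopy equivalence. Using this, the doubling trick replaces this sheaf by a sheaf on $M\times\bR$ that is constant outside a compact set in the $\bR$-direction. Its fiberwise cohomology over each $m\in M$ is that of a point up to shift. This is the sheaf-theoretic shadow of a family of functions whose fiberwise Morse homology has rank one, which is exactly what the tube condition requires.

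\emph{Geometric stage.} Next one realizes this sheaf geometrically, i.e.\ one finds a fibered function $f:M\times\bR^N\to\bR$ whose fiberwise sublevel-set sheaf pushforward recovers it. I would argue by obstruction theory over the skeleta of $M$, using the stable Gauss map as the only input. The chosen nullhomotopy of $L\to U/O$ supplies a homotopy between the polarizations $T\Lambda$ and the vertical distribution after stabilization. By Giroux \cite{G90} and the Chekanov--Sikorav homotopy lifting (Lemma \ref{lem: homotopy lifting} and \cite{EG98}), this yields local generating functions over small open sets of $M$ that are quadratic at infinity. On overlaps, two local generating functions differ by fiber-preserving diffeomorphisms and stabilizations. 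Gluing them is therefore a parametrized uniqueness problem, whose obstructions live in the homotopy of the space of generating functions for a fixed Lagrangian germ. Following the last part of Section 3 of \cite{ACGK}, these obstructions vanish once one only demands tube-type behaviour at infinity rather than standard quadratic behaviour. The reason is that the relevant classifying space maps to $\cT_\infty$, and the comparison with the sheaf-theoretic data kills the obstruction there.

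\emph{Model stage.} The final step puts the resulting function into the form of Definition \ref{def: tube}: a fibered function $M\times\bR^N\to\bR$ whose asymptotic homogenization is a fibered function of quadratic tube type. This uses the comparison of Section \ref{sec: SpaceOfTubes} between Waldhausen's partition model and the spherical model, which becomes highly connected after stabilization. For the spherical model this translation is the one carried out in \cite{AACK}, as noted before Definition \ref{def: quad tube}.

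I expect the main obstacle to be the geometric stage. One has to show that the coherence data on overlaps can always be made to land in tube-type functions, and this is where Waldhausen's fibration $\cH_\infty\to\cT_\infty\to BG$ (Theorem \ref{thm: wald fib}) and the doubling construction must interact. In practice I would defer to the argument around Theorem 3.26 of \cite{ACGK} for this step.
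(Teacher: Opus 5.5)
\paragraph{A gap in the geometric stage.} Your model stage matches what the paper does. \cite{ACGK} proves the statement for a partition-type model of tubes, and \cite{AACK} converts it to functions of tube type in the sense of Definition \ref{def: tube}. Strictly, \cite{AACK} does this only for a $C^1$-close Hamiltonian isotopic $L'$; the paper then returns to $L$ itself by homotopy lifting after stabilization, a point you omit.

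The genuine gap is the geometric stage, which carries all the content. Your mechanism is to glue generating functions over open sets of $M$ by obstruction theory on skeleta, with the obstructions said to ``vanish once one only demands tube-type behaviour at infinity'' because ``the relevant classifying space maps to $\cT_\infty$''. This gives no actual reason for any obstruction to vanish, and it is not where tubes come from. The sheaf stage likewise feeds into nothing. There is no general theorem realizing a sheaf on $M\times\bR$ as the sublevel-set pushforward of a generating function, so the claim that your doubled sheaf is the ``shadow'' of a tube-type family cannot be cashed in.

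\paragraph{The missing idea.} What is missing is the doubling trick carried out directly on generating functions (Section \ref{sec: DoublingConstruction}).
\begin{enumerate}
\item The nullhomotopy of $L\to U/O$ is used, via \cite{G90}, to build a generating function $f$ on a neighbourhood $U$ of its fiberwise critical set in $M\times\bR^N$, defined over all of $M$. So the problem is not gluing over $M$ but extending in the fiber direction.
\item Adding a cubic in an extra variable $x$ and cutting off gives a global $h:M\times\bR^{N+1}\to\bR$. It generates the double $\Lambda_0\sqcup\Lambda_\epsilon$ and satisfies $\partial h/\partial x\neq 0$ off a compact set. This extension is unobstructed because away from $U$ one may simply take $\partial h/\partial x>0$.
\item Because $L$ is embedded and exact, $\Lambda$ has no Reeb chords. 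Hence $\Lambda_0\sqcup\Lambda_s$, $s\in[\epsilon,T]$, is a Legendrian isotopy, and homotopy lifting gives functions $h_s$ with the same behaviour at infinity.
\item For $T$ large, some level $c$ separates $\Lambda_0$ from $\Lambda_T$. Then $\{h_T\le c\}\to M$ is the tube bundle, and $h_T$ restricted to it generates $\Lambda$ alone.
\end{enumerate}
Your proposal never produces the tube bundle and never uses the absence of Reeb chords. That is exactly where the exactness of $L$ enters the argument.
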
 

To be precise, in \cite{ACGK} this result was proved for a slightly different notion of function of tube type, the space of which is close to the space we denoted by $\cT_{N,k}^p$ in Section \ref{sec: SpaceOfTubes}. In \cite{AACK} it was shown that the statement of Theorem \ref{thm: exist for stat triv} implies the same result for the notion of a function of tube type considered in the present article, i.e. in the space we denoted by $\cT_{N,k}$. Strictly speaking, this is only proved for a $C^1$-close Hamiltonian isotopic nearby Lagrangian $L' \subset T^*M$ rather than for $L$ itself, but one may nevertheless conclude the desired conclusion as stated above by applying the homotopy lifting property for generating functions under Legendrian isotopies after stabilization. 

For a nearby Lagrangian $L$ with stably trivial Gauss map the tube torsion $\tau(L)$ consists of all (negatives of) higher torsion classes $\tau_{2k}(W) \in H^{4k}(M;\bR)$ of orientable tube bundles $W \to M$ on which $L$ admits a generating function. 


\begin{theorem}\label{thm: tube torsion nearby Lag}
Let $L\subset T^*M$ be a nearby Lagrangian whose stable Gauss map $L \to U/O$ is trivial. Then the tube torsion $\tau(L) \subset H^*(M;\bR)$ of $L$ is well-defined. It consists of a nonempty union of cosets of  $\text{im}(p)$ and is a Legendrian isotopy invariant of the Legendrian lift of $L$. In particular, if $\tau(L) \neq \text{im}(p)$ then $L$ is not Hamiltonian isotopic to the zero-section. 
\end{theorem}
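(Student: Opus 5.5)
The proof reduces to assembling results already established, in four steps.

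\emph{Step 1: nonemptiness.} Since the stable Gauss map $L \to U/O$ is trivial, Theorem \ref{thm: exist for stat triv} provides a generating function of tube type $f:M \times \bR^N \to \bR$ for the Legendrian lift $\Lambda \subset J^1M$ of $L$. As stated, this is only proved for a $C^1$-close Hamiltonian isotopic $L'$. We transfer it to $L$ itself using the homotopy lifting property, Lemma \ref{lem: homotopy lifting}; stabilization there is by a compactly supported perturbation of $f \oplus Q$, which remains of tube type.

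\emph{Step 2: normalizing the generating function.} We need the generating function to be orientable, of even index, with ambient dimension divisible by four.
\begin{itemize}
\item Proposition \ref{prop: or aut} upgrades $f$ to an orientable generating function by twisted stabilization with a rigid fibered quadratic function.
\item A further constant stabilization by a form of suitable index and dimension makes the index even and the ambient dimension a multiple of four. Orientability is preserved, by the tensor product argument preceding Proposition \ref{prop: effect twisted stab}.
\end{itemize}
The resulting tube bundle $W = T(f)$ lies in $\cT(\Lambda)$, so $-\tau_k(W) \in \tau(\Lambda)$ and $\tau(L) := \tau(\Lambda)$ is nonempty. This is the sense in which $\tau(L)$ is well-defined.

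\emph{Step 3: coset structure and invariance.} Theorem \ref{thm: main 1} gives the two remaining structural properties.
\begin{itemize}
\item By Theorem \ref{thm: main 1}(\ref{item: coset}), $\tau(\Lambda)$ is a union of cosets of $\text{im}(p)$. This follows from Remark \ref{rem: monoid act} together with Proposition \ref{prop: effect twisted stab}: twisting by rigid $q$ with stable bundle $E$ shifts the torsion by $p(E)$, and $E$ ranges over all even-index classes, which suffices to produce all of $KO^0(M)$ up to the normalization conventions.
\item By Theorem \ref{thm: main 1}(\ref{item: Leg-inv}), $\tau(\Lambda)$ is a Legendrian isotopy invariant.
\end{itemize}
Combined with Step 1, $\tau(L)$ is a nonempty union of cosets.

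\emph{Step 4: the obstruction.} Suppose $L$ is Hamiltonian isotopic to the zero section through exact Lagrangians. Then the Legendrian lifts are Legendrian isotopic, with the $z$-coordinate given by the primitives. Invariance then gives $\tau(L) = \tau(M)$, and Theorem \ref{thm: main 1}(\ref{item: zero-sec triv}) gives $\tau(M) = \text{im}(p)$. Contrapositively, if $\tau(L) \neq \text{im}(p)$ then $L$ is not Hamiltonian isotopic to the zero section.

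The main obstacle is Step 1: reconciling the tube model of \cite{ACGK}/\cite{AACK} with the one used here, and handling the passage from $L'$ to $L$. The point to verify is that the homotopy lifting output stays inside the class of fibered functions of tube type. This holds because the perturbation $\ve_t$ is compactly supported, so the asymptotic quadratic tube function $g$ is unchanged, apart from adding $Q$.
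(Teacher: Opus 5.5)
Your proposal is correct and follows the paper's own route: the paper proves this theorem in one line by combining Theorem \ref{thm: exist for stat triv} (existence of a tube-type generating function) with Theorem \ref{thm: main 1} (coset structure, Legendrian isotopy invariance, and $\tau(M)=\text{im}(p)$ for the zero section). Your extra steps are the details the paper leaves to the surrounding discussion: passing from the $C^1$-close $L'$ to $L$ via Lemma \ref{lem: homotopy lifting}, and normalizing to an orientable, even-index generating function on $\bR^{4N}$ via Proposition \ref{prop: or aut} and a constant stabilization, so that $T(f)\in\cT(\Lambda)$.
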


\begin{proof} This follows from Theorem \ref{thm: main 1} and Theorem \ref{thm: exist for stat triv}. \end{proof} 

The condition that the stable Gauss map is trivial is automatically satisfied for all nearby Lagrangians if $M$ has the homotopy type of a sphere, since as was also discussed in Section Section \ref{sec: stable gauss map} it was also proved in \cite{ACGK} that the stable Gauss map $L \to U/O$ of a nearby Lagrangian is always zero on homotopy groups, hence automatically trivial when $M$ (and hence also $L$) has the homotopy type of a sphere.

\begin{theorem}\label{thm: tube tor Lag spheres}
Let $L\subset T^*\Sigma$ be a nearby Lagrangian, where $\Sigma$ (and therefore also $L$) is an $n$-dimensional homotopy sphere. Then the tube torsion $\tau(L) \subset H^*(M;\bR)$ of $L$ is well-defined. It consists of a nonempty union of cosets of $\text{im}(p)$ and is a Legendrian isotopy invariant of the Legendrian lift of $L$. In particular, if $\tau(L) \neq \text{im}(p)$ then $L$ is not Hamiltonian isotopic to the zero-section. 
\end{theorem}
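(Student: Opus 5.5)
The plan is to reduce Theorem \ref{thm: tube tor Lag spheres} to Theorem \ref{thm: tube torsion nearby Lag}. The only hypothesis of that theorem not explicitly assumed here is that the stable Gauss map $L \to U/O$ is nullhomotopic. So the whole argument is to verify this hypothesis when $\Sigma$ is a homotopy sphere, and then read off the conclusions.

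First, since $L \subset T^*\Sigma$ is a nearby Lagrangian, the projection $L \to \Sigma$ is a homotopy equivalence \cite{FSS, Na09, A12, Kra13, Gui12}. Hence $L$ is a homotopy $n$-sphere; in particular $L$ is connected and $(n-1)$-connected.

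Next, the homotopy class of the stable Gauss map $L \to U/O$ is determined by an element of $\pi_n(U/O)$. For $n\geq 1$, the obstruction-theoretic argument goes as follows: $L$ has a CW structure with one $0$-cell and one $n$-cell, so $[L,U/O]$ (free and based classes agree because $U/O$ is an H-space, indeed an infinite loop space) is identified with $\pi_n(U/O)$ via the degree-one map $L \to S^n$ collapsing the $0$-cell. We then invoke the result of \cite{ACGK} recalled in Section \ref{sec: gen fun nearby}: the stable Gauss map of a nearby Lagrangian induces the zero map on homotopy groups. Under the identification above, the class of $L \to U/O$ is exactly its image of the fundamental class in $\pi_n(U/O)$, which therefore vanishes. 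Hence the map is nullhomotopic. The case $n=0$ is trivial.

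Finally, with the stable Gauss map trivial, Theorem \ref{thm: tube torsion nearby Lag} (itself combining Theorem \ref{thm: exist for stat triv} with Theorem \ref{thm: main 1}) gives the remaining conclusions:
\begin{itemize}
\item a generating function of tube type exists, so $\tau(L)$ is defined and nonempty; by Proposition \ref{prop: or aut} we may take it orientable, and after stabilization also of even index in dimension divisible by four, so that it lies in $\cT(\Lambda)$;
\item $\tau(L)$ is a union of cosets of $\text{im}(p)$;
\item $\tau(L)$ is invariant under Legendrian isotopy of the Legendrian lift.
\end{itemize}
The last assertion follows because a Hamiltonian isotopy of $L$ to the zero section lifts to a Legendrian isotopy of the lifts, and $\tau(\Sigma)=\text{im}(p)$ by Theorem \ref{thm: main 1}(\ref{item: zero-sec triv}).

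The main (mild) obstacle is the second step: passing from ``trivial on homotopy groups'' to ``nullhomotopic.'' This requires that the relevant homotopy group detects the map. For a homotopy sphere it does, via the cell structure. The only care needed is the basepoint issue, which is handled by the H-space structure of $U/O$.
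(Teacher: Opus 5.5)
Your proposal is correct and follows the paper's route. Both arguments reduce to Theorem \ref{thm: tube torsion nearby Lag}, using the result of \cite{ACGK} that the stable Gauss map vanishes on homotopy groups, which makes it nullhomotopic when $L$ is a homotopy sphere. Your identification of $[L,U/O]$ with $\pi_n(U/O)$ via a homotopy equivalence $S^n \to L$, and your handling of orientability and even index via Proposition \ref{prop: or aut} and stabilization, simply spell out what the paper leaves implicit.
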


\begin{proof} This follows just as Theorem \ref{thm: tube tor Lag spheres}  above but using the fact that nearby Lagrangian homotopy spheres all have trivial stable Gauss map \cite{ACGK} to remove the extra hypothesis. \end{proof}

The existence theorem for generating functions of tube type from \cite{ACGK} is proved by a construction which is canonical once a nullhomotopy of the stable Gauss map $L \to U/O$ is fixed. We will now show that this construction picks out a distinguished coset $\nu(L) \subset \tau(L)$ of  $\text{im}(p)$ which will turn out to be invariant under Hamiltonian isotopies of $L$, though not obviously so under Legendrian isotopies of the Legendrian lift of $L$. 

\subsection{The Guillermou trick} 
\label{sec: DoublingConstruction}

The following result is implicit in \cite{ACGK}.

\begin{proposition}\label{prop: natural coset}
Let $L \subset T^*M$ be a nearby Lagrangian with trivial stable Gauss map $L \to U/O$. The doubling construction determines a distinguished coset of the rigid tube map $[M,BO] \to [M,\cT_\infty ]$ which is invariant under Hamiltonian isotopy of $L$.
\end{proposition}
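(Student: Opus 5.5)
The plan is to make explicit how the \cite{ACGK} construction depends on its input, and then to check that every choice is absorbed by the action of $[M,BO]$ on $[M,\cT_\infty]$ through the rigid tube map $\cQ_\infty\to\cT_\infty$. This action comes from the monoid structure, i.e.\ from twisted stabilization (Remark \ref{rem: monoid act}).

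\textbf{Step 1: the construction.} Fix a nullhomotopy $h$ of the stable Gauss map $L\to U/O$. As I read it, the doubling trick of \cite{Gui12} proceeds as follows.
\begin{itemize}
\item First one forms the doubled Lagrangian.
\item Using $h$, one trivializes the relevant sheaf or Floer data into a family of chain complexes, or a family of spectra, over $M$. These are locally constant with fiber equivalent to a sphere.
\item The construction of \cite{ACGK} (Theorem 3.26 and the preceding lemmas) then realizes this data by a fibered function of tube type $f_h:M\times\bR^N\to\bR$ generating $L$.
\item Its class $[T(f_h)]\in[M,\cT_\infty]$ is determined by $h$. Any auxiliary choices (cutoffs, approximations, the $C^1$-small Hamiltonian perturbation from \cite{AACK}) vary in contractible spaces, so they do not change this class.
\end{itemize}
Accordingly, the first task is to show that $[T(f_h)]$ depends only on the homotopy class of $h$. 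I would do this by running the construction in a one-parameter family over $[0,1]$: a homotopy between choices yields a fibered function of tube type over $M\times[0,1]$, and hence a homotopy of classifying maps.

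\textbf{Step 2: changing the nullhomotopy.} Two nullhomotopies of $L\to U/O$ differ by an element of $[L,\Omega(U/O)]=[L,\bZ\times BO]$, using Bott periodicity $\Omega(U/O)\simeq\bZ\times BO$. Since $L\to M$ is a homotopy equivalence, this group is $[M,\bZ\times BO]$. The key claim is the following: changing $h$ by a class $E\in KO^0(M)$ changes the output $f_h$, up to homotopy through fibered functions of tube type and stabilization, into a twisted stabilization $f_h\oplus q_E$. Here $q_E$ is a fibered rigid quadratic form whose stable bundle is $E$. The reason is that the doubling construction is built fiberwise from the polarization comparison, and modifying the homotopy of Lagrangian distributions by a loop in $U/O$ is implemented by adding the quadratic form whose negative eigenspace bundle is the corresponding element of $BO$. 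This is exactly the identification $\cQ_\infty\simeq BO$ recalled in Section \ref{sec: SpaceOfTubes}. Granting this, the classes $\{[T(f_h)]\}_h$ form a single orbit of $[M,BO]$ acting through the rigid tube map. That orbit is the distinguished coset.

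\textbf{Step 3: Hamiltonian invariance.} Let $L_t$ be a Hamiltonian isotopy. The stable Gauss maps form a continuous family, so a nullhomotopy $h_0$ for $L_0$ transports to a nullhomotopy $h_1$ for $L_1$. I would then run the construction of \cite{ACGK} parametrically in $t$, with the Hamiltonian acting on the doubled object throughout, to obtain a family $f_{h_t}$ of fibered functions of tube type over $M\times[0,1]$. This gives $[T(f_{h_0})]=[T(f_{h_1})]$, and hence equality of cosets. Note that this uses the exact Lagrangian family in $T^*M$, not merely a Legendrian isotopy, since the doubling trick requires the Lagrangian itself. This accounts for why only Hamiltonian invariance is claimed.

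\textbf{Main obstacle.} I expect Step 2 to be the hardest part: identifying, inside the \cite{ACGK} construction, the effect of changing the nullhomotopy with the twisted stabilization by the corresponding rigid form. I would first attempt this in the universal local model, namely $L$ equal to the zero section with two different nullhomotopies. There the construction should visibly return a family of quadratic forms $Q_m$ with stable bundle $E$. I would then argue by naturality that the difference is additive, i.e.\ compatible with the monoid structure on $\cT_\infty$.
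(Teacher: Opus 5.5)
Your outline is correct and follows the paper's second, hands-on explanation. Both use the same three steps: the tube bundle class depends only on the nullhomotopy $h$; changing $h$ by an element of $[M,\bZ\times BO]$ acts through twisted stabilization, i.e.\ the rigid tube map; and the construction runs parametrically along a Hamiltonian isotopy covered by a homotopy of nullhomotopies.

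You do misdescribe the black box in Step 1, and correcting it removes your ``main obstacle.'' The construction does not trivialize sheaf or Floer data into a family of chain complexes or spectra. Instead:
\begin{enumerate}
\item The nullhomotopy is used, via Giroux \cite{G90}, to build a \emph{local} generating function near the fiberwise critical set.
\item This is doubled by adding a cubic in an extra variable $x$, then globalized by cutoffs so that $\partial h/\partial x\neq 0$ away from the critical set.
\item The two copies $\Lambda_0\sqcup\Lambda_s$ are pulled apart by Guillermou's trick \cite{Gui12}. This is a Legendrian isotopy exactly because $\Lambda$ has no Reeb chords.
\item The tube bundle is the sublevel set $\{h_T\le c\}$.
\end{enumerate}
Since $h$ enters only through Giroux's local generating function, Step 2 is immediate. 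In Giroux's model of Bott periodicity $\Omega(U/O)\simeq\bZ\times BO$, changing $h$ by $E$ is literally a twisted stabilization by a family of quadratic forms with negative eigenbundle $E$. The doubling, cutoff and separation steps then simply carry this rigid summand along. You therefore do not need the zero-section model plus an additivity argument.

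The paper also leads with a more formal proof that never analyzes the dependence on $h$. The twisted doubling construction gives a Hamiltonian-invariant class $M\to B(\bZ\times\bZ\times\cT_\infty,\cQ)$ whose composite to $U/O$ is the stable Gauss map. The fibration sequence $\bZ\times BO\to\bZ\times\cT_\infty\to B(\bZ\times\bZ\times\cT_\infty,\cQ)\to U/O$ from \cite{ACGK} then identifies nullhomotopies with lifts to $\bZ\times\cT_\infty$, unique up to $\bZ\times BO$. That route yields the coset and its invariance formally. Yours has the advantage of producing actual generating functions of tube type, which is what the later notion of a natural generating function requires.
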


Indeed, \cite{ACGK} identifies the colimit of the left action of stabilization by $x^2-y^2$ on $\cT$ as the space $\bZ \times \bZ \times \cT_\infty$. This colimit still has a right action of $\cQ$, so we may form the (geometric realization) of the bar construction $B(\bZ \times \bZ \times \cT_\infty,\cQ)$. To any nearby Lagrangian $L \subset T^*M$ the (twisted) doubling construction associates a homotopy class of map  $M \to B(\bZ \times \bZ \times \cT_\infty,\cQ)$, invariant under Hamiltonian isotopy of $L$ (this map classifies a {\em twisted generating function}). We have a fibration sequence by Diagram 4.2 of \cite{ACGK} which reads
$$\bZ \times BO \to  \bZ  \times \cT_\infty \to B(\bZ \times \bZ \times \cT_\infty,\cQ) \to U/O$$
and such that the composition $M \to B(\bZ \times \bZ \times \cT_\infty,\cQ) \to U/O$ is the stable Gauss map of $L$ under a homotopy inverse for the projection $L \to M$. So the desired statement follows formally from the fact that a nullhomotopy of $M \to U/O$ is equivalent to a lift $M \to \bZ \times  \cT_\infty$ of the map $M \to B(\bZ \times \bZ \times \cT_\infty,\cQ)$, which is homotopically unique up to a lift $M \to \bZ \times BO$. In fact, if we know a priori that $L$ has trivial stable Gauss map we may avoid the bar construction and we briefly explain how this works. 

Given a Legendrian submanifold $\Lambda \subset J^1M$, a local generating function for $\Lambda$ consists of a generating function $f:U \to \bR$ where $U \subset M \times \bR^N$ is some open subset. Since $U$ could be an arbitrarily small neighborhood of the fiberwise singular set of $f$ this notion amounts to a purely linear algebraic question. Indeed, as discussed in Section \ref{sec: stable gauss map} it goes back to Giroux \cite{G90} that the existence of a so-called local generating function is equivalent to the triviality of the stable Gauss map $\Lambda \to U/O$. Moreover, as is explained in \cite{G90} and further discussed in \cite{ACGK}, one may take as input a nullhomotopy of the stable Gauss map $\Lambda \to U/O$ to construct such a local generating function. If one were to change the chosen null-homotopy of the stable Gauss map by concatenating with a map $\Lambda \to \Omega (U/O),$ then this would correspond to performing a twisted stabilization of the local generating function by a family of quadratic forms on $\Lambda.$ The negative eigenspaces of the fiberwise Hessian form a real vector bundle $E$ over $\Lambda$ classified by the map $\Lambda \to \bZ \times BO$ corresponding to the given map $\Lambda \to \Omega (U/O)$ under the model for Bott periodicity $\Omega(U/O) \simeq \bZ \times BO$ considered in \cite{G90}. 

 The construction for generating functions of tube type for nearby Lagrangians $L \subset T^*M$ with trivial stable Gauss map $L \to U/O$ takes a nullhomotopy of the Gauss map and the corresponding local generating function $f:U \to \bR$ where $U \subset M \times \bR^{N+1}$ as its starting point, and then argues as follows. Denote $\bR^{N+1} = \bR \times \bR$ with $x \in \bR$ the coordinate of the second factor. By a direct sum with the function $x^3+3x$ one may construct a local generating function for a {\em double} of $\Lambda$, which consists of two parallel copies of $\Lambda$ in $J^1M$, namely $\Lambda_0 \coprod \Lambda_1$ for $\Lambda_0=\Lambda$ and $\Lambda_\epsilon$ a shift of $\Lambda$ by a small but constant amount $\epsilon \in \bR_+$ in the $z$-coordinate $J^1M=T^*M \times \bR_z$. Further, by a judicious choice of cutoffs, one may arrange for the double of $\Lambda$ to be generated by a global, i.e. not local, generating function $h_s:M \times \bR^{N+1} \to \bR$ such that away from the critical set there holds $\partial h/ \partial x \neq 0$ for the distinguished coordinate $x$ of $\bR^{N+1}=\bR^N \times \bR$. 

When $\Lambda$ happens to be the Legendrian lift of a nearby Lagrangian $L \subset T^*M$, one may use the following remarkable trick due to Guillermou \cite{Gui12}. Consider the family $\Lambda_0 \coprod \Lambda_s$ as $s>0$ gets larger. Since $L$ is an embedded Lagrangian, $\Lambda$ has no Reeb chords and hence $\Lambda_0 \coprod \Lambda_s \subset J^1M$ is a Legendrian isotopy. Possibly after increasing $N$, by the homotopy lifting property for generating functions there continues to exists a family of functions $h_s:M \times \bR^N \to \bR$ generating  $\Lambda_0 \coprod \Lambda_s$ such that $\partial h_s / \partial x \neq 0$ outside of a compact subset. When $s=T$ is big enough so that there exists a value $c \in \bR$ such that the $z$-coordinate of $\Lambda$ is always below $c$ and the $z$-coordinate for $\Lambda_T$ is always above $c$, we may consider the codimension zero sub-bundle $\{h_T \leq c \} \subset M \times \bR^N$. It is shown in \cite{ACGK} that the resulting fiber bundle is a bundle of tubes in a sense which is close to Waldhausen's parition model $\cT_{N,k}^p$, which stably has the same homotopy type as the space of functions of tube types $\cT_{N,k}$ in the sense of the present article as discussed in Section \ref{sec: SpaceOfTubes}. The homotopy class of the resulting map $M \to \cT_\infty$ only depends on the choice of nullhomotopy of the stable Gauss map $L \to U/O$.


The monoid $[M,\Omega(U/O)]$ (with respect to concatenation of loops in $\Omega(U/O)$) acts transitively on the set of homotopy classes of nullhomotopies of the stable Gauss map $L \to U/O$ of any nearby Lagrangian $L \subset T^*M$, since we may concatenate any such nullhomotopy with the resulting map $L \to M \to \Omega (U/O)$. As discussed above, the effect on the resulting class in $[M, \cT_\infty]$ is given by the action of $[M, BO]$ with respect to the twisted stabilization, i.e. with respect to the rigid tube map $BO \to \cT_\infty$, under the model for Bott periodicity $\Omega(U/O) \simeq \bZ \times BO$ of \cite{G90}.  

Finally, the doubling construction works parametrically, as long as the path of Lagrangians is covered by a homotopy of null-homotopies of the stable Gauss map, so the resulting coset of the map $[M,BO] \to [M, \cT_\infty] $ is a Hamiltonian isotopy invariant.

\begin{definition} The coset of the map $[M,BO] \to [M, \cT_\infty] $  distinguished by Proposition \ref{prop: natural coset} is be called the {\em natural coset}.\end{definition}


\begin{definition}
Let $L \subset T^*M$ be a nearby Lagrangian with trivial stable Gauss map $L \to U/O$. The {\em nearby Lagrangian torsion} $\nu(L) \subset \tau(L)$ is the subset of $H^*(M;\bR)$ consisting of the (negatives of the) higher torsion classes of tube bundles in the natural coset of $L$. 
\end{definition}


\begin{proof}[Proof of Theorem \ref{thm: main 2} ]$ \,$
  \begin{enumerate} 
   \item that $\nu(L)$ is invariant under Hamiltonian isotopies of $L$ follows from Proposition \ref{prop: natural coset}.
  \item that $\nu(L)$ is a single coset of $p$ follows from Proposition \ref{prop: effect twisted stab}.
   \item for the zero section we have $\tau(M)= \text{im}(p)$ by Theorem \ref{thm: main 1}, so a fortiori $\nu(M) = \text{im}(p)$.
  \end{enumerate}
\end{proof}

\begin{definition}
\label{def: natural}
Let $L \subset T^*M$ be a nearby Lagrangian generated by a generating function of tube type $f:M \times \bR^N \to \bR$. We say that $f$ is {\em{natural}} if it arises from the doubling construction.
\end{definition}

\begin{remark}
To be precise, this means that the doubling construction of \cite{ACGK} produces a generating function $g$ for $L$ which is of tube type in the sense of \cite{ACGK} and such that via the modification of \cite{AACK} the function $g$ corresponds to $f$. So $f$ is a fibered function of tube type in the sense of the present article. Note that $f$ being natural implies, but is not a priori equivalent to, that the tube bundle $W=T(f)$ is in the natural coset of the rigid tube map. 
\end{remark}

\begin{remark}
For $i=0,1$ let $L_i \subset T^*M$ be a nearby Lagrangian and let $\Lambda_i \subset J^1M$ be corresponding Legendrian lifts. It could in principle be possible that $L_0$ and $L_1$ are not Hamiltonian isotopic but $\Lambda_0$ and $\Lambda_1$ are Legendrian isotopic. If so, then $\tau(\Lambda_0)=\tau(\Lambda_1)$, i.e. their tube torsions agree. However the authors do not know whether tube torsion need be a single $p$-coset, in principle it could be larger. In particular it could be the case that $\nu(L_0) \neq \nu(L_1)$, i.e. that the nearby Lagrangian torsion $p$-cosets are different, even though the (larger) collections of $p$-cosets $\tau(\Lambda_0)=\tau(\Lambda_1)$ coincide.
\end{remark}

\subsection{Nearby Lagrangians with moderate tangencies}\label{sec: gen nl mod}

Suppose that $L \subset T^*M$ is a nearby Lagrangian whose projection $L \to M$ only has fold type singularities, or equivalently such that the Legendrian front (of Legendrian lift) of $L$ in $J^0M =M \times \bR$ only has cusp singularities. Yet another equivalent condition is that $L$ has quadratic tangencies with respect to the foliation of $T^*M$ by the cotangent leaves $T_m^*M$, $m \in M$, see Figure \ref{Figure1905}.

\begin{definition} We will say that $L \subset T^*M$ has {\em moderate tangencies} when any (and therefore all) of the three equivalent conditions stated above hold. \end{definition}

Any nearby Lagrangian has trivial Maslov class \cite{A12} and so as was explained in Section \ref{sec: stable gauss map} whenever $L$ has moderate tangencies the stable Gauss map $L \to U/O$ is nullhomotopic. In fact we have a canonical nullhomotopy associated to each bounding chain for the cusp locus (co-oriented with the Maslov co-orientation).  In particular the tube torsion $\tau(L)$ of $L$ is well-defined, and so is the nearby Lagrangian torsion $\nu(L) \subset \tau(L)$.

Let $L \subset T^*M$ is a nearby Lagrangian with moderate tangencies, then any generating function of tube type $f:M \times \bR^N \to \bR$ for $L$ has moderate singularities.  As explained in Appendix \ref{sec: higher torsion of bundles}, for each generating function of tube type $f$ for $L$ we therefore have an associated cohomology class 
 $\tau_{2k}(f) \in H^{4k}(M;\bR)$ which is the pull-back of the universal class $\tau_{4k}$ in  $H^{4k}(\text{\em Wh}_\bullet^h(\bC,U(1));\bR)$ by the map
$$ \xi_\bullet: simp\,M\to \text{\em Wh}_\bullet^h(\bZ[\pi_1W],\pi_1W) $$
obtained from the family of fiberwise Morse chain complexes $C_*(f_m)$, $m \in M$.
\begin{theorem}
Let $L \subset T^*M$ is a nearby Lagrangian with moderate tangencies and $f:M \times \bR^N \to \bR$  a generating function of tube type for $L$. 
\begin{enumerate}
\item For $W=T(f)$ the class $-\tau_{2k}(W)$ is in the same coset of $\text{im}(p_k)$ as the class $\tau_{2k}(f)$.
\item There exists a framed generating function of tube type $h:M \times \bR^K \to \bR$ for $L$ such that $\tau_{2k}(f)$ and $\tau_{2k}(h)$ are in the same coset of $\text{im}(p_k)$.
\item If $L$ is Hamiltonian isotopic to the zero section, then $\tau_{2k}(f) \in \text{im}(p_k)$.
\end{enumerate}
\end{theorem}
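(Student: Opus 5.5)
We will deduce all three items from results already established, using mainly Proposition \ref{prop: stab framing}, Proposition \ref{prop: framing for special Leg} and Proposition \ref{prop: effect twisted stab}. The basic input is that the projection $L \to M$ is a homotopy equivalence \cite{FSS, Na09, A12, Kra13, Gui12}. Since $L$ has moderate tangencies, the front of its Legendrian lift has only cusp singularities, so $f$ has moderate singularities. Under the canonical identification $\Sigma_f \cong L$, the map $\pi^{\Sigma f}:\Sigma_f \to M$ becomes the projection $L \to M$, and is therefore a homotopy equivalence. If $f$ is not already orientable of even index in ambient dimension divisible by four, we first twist-stabilize it as in Proposition \ref{prop: or aut}; by Proposition \ref{prop: effect twisted stab} this changes $\tau_{2k}(T(f))$ only by an element of $\text{im}(p_k)$ and leaves $\tau_{2k}(f)$ unchanged.

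For item (1), apply Proposition \ref{prop: stab framing} to $f$:
$$-\tau_{2k}(W)=\tau_{2k}(f)+\pi^{\Sigma f}_*p_k(\gamma_f).$$
Since $\pi^{\Sigma f}$ is a homotopy equivalence, $\gamma_f\simeq(\pi^{\Sigma f})^*E$ for some stable bundle $E\to M$. Then $\pi^{\Sigma f}_*(\pi^{\Sigma f})^*p_k(E)=p_k(E)$, because pushforward after pullback is multiplication by the fiberwise Euler characteristic, which is $+1$ for even index. So the correction term lies in $\text{im}(p_k)$. This is the first half of the proof of Proposition \ref{prop: torsion for special Leg}, specialized to $\Lambda$ the lift of $L$.

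For item (2), take $h=f\oplus q$ with $q$ the fibered function of rigid tube type provided by Proposition \ref{prop: framing for special Leg}, chosen so that $\gamma_h$ is trivial. By Remark \ref{rem: monoid act}, $h$ still generates $L$. The fiberwise Morse complexes of $h_m$ are those of $f_m$ shifted by the even index of $q$, so $\tau_{2k}(h)=\tau_{2k}(f)$ on the nose, and in particular the two lie in the same coset.

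For item (3), suppose $L$ is Hamiltonian isotopic to the zero section. Then the Legendrian lift $\Lambda$ is Legendrian isotopic to $M\subset J^1M$, so by Theorem \ref{thm: main 1} we have $\tau(\Lambda)=\text{im}(p)$. Since $T(f)\in\cT(\Lambda)$, the class $-\tau_{2k}(T(f))$ lies in $\text{im}(p_k)$, and by item (1) so does $\tau_{2k}(f)$.

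The main obstacle is bookkeeping rather than new mathematics. Signs and parities must match the normalization in Proposition \ref{prop: stab framing}: even index, ambient dimension a multiple of four, and an orientable tube bundle. We also need that a Legendrian isotopy of the lift, which may pass through fronts with worse than cusp singularities, does not matter. It does not, because Theorem \ref{thm: main 1} is stated at the level of tube bundles and uses no moderate-singularity hypothesis, and item (1) is applied only to the final $L$ with moderate tangencies.
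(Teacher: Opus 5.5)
Your proposal is correct and follows the paper's proof. Item (1) is the first half of Proposition \ref{prop: torsion for special Leg}. Item (2) is Proposition \ref{prop: framing for special Leg} together with the fact that even-index twisted stabilization leaves $\tau_{2k}$ of the fiberwise Morse complexes unchanged. Item (3) combines (1) with Theorem \ref{thm: main 1}. One small point: your preliminary twisting of a non-orientable $f$ is unnecessary and slightly ill-posed, since $\tau_{2k}(f)$ and $\tau_{2k}(T(f))$ are only defined for orientable, even-index $f$, which the statement implicitly assumes.
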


\begin{proof} Here we use the fact that the projection $L \to M$ of a nearby Lagrangian to the base is a homotopy equivalence \cite{A12}.
\begin{enumerate} 
\item This follows from Proposition \ref{prop: torsion for special Leg}.
\item This follows from Proposition \ref{prop: framing for special Leg}.
\item This follows from item (1) above combined with Theorem \ref{thm: main 1}.
\end{enumerate}
\end{proof}

In particular, the tube torsion $\tau(L)$ of a nearby Lagrangian with moderate singularities tangencies $L \subset T^*M$ be computed directly from the parametrized Morse theory of generating functions of tube type for $L$. Moreover, the nearby Lagrangian torsion $\nu(L)$ of a nearby Lagrangian with moderate tangencies $L \subset T^*M$ be computed directly from the parametrized Morse theory of any single natural generating function of tube type for $L$. 

Let $\Lambda \subset J^1M$ be a Legendrian lift of the exact Lagrangian submanifold $L \subset T^*M$ (well-defined up to translation in the $z \in \bR$ direction) and let  $\pi_F:J^1M \to J^0M = M \times \bR$ be the front projection. A basic constraint on the parametrized Morse theory of $f:M \times \bR^N \to \bR$  is that the Cerf diagram of such a generating function, i.e. the subset $\{(m,z) \in M \times \bR : \, \text{$z$ is not a regular value of $f_m:\bR^N \to \bR$} \} $ of $M \times \bR$, is precisely the front projection $\pi_{F}(\Lambda) \subset  M \times \bR$ of $\Lambda$.

The front projection of a nearby Lagrangian $L \subset T^*M$ is dramatically restricted by the {\em absence of Reeb chords} in the Legendrian lift $\Lambda \subset J^1M$ of $L$, i.e. pairs of distinct points in $J^1M$ which project to the same point in $T^*M$. Geometrically this means the front $\pi_F(\Lambda) \subset M \times \bR$ can have no pairs of point vertically above each other in the $z \in \bR$ direction for which the tangent hyperplanes to $\pi_F(\Lambda)$ at those two points coincide. For example, this forbids any weaving of 1-dimensional fronts in $J^0S^1=S^1 \times \bR$. 

It is plausible that this strong constraint on the front $\pi_F(\Lambda) \subset M \times \bR$, and hence on the filtration of the fiberwise Morse chain complexes $C_*(f_m)$, $m \in M$, could force the higher torsion $\tau_{2k}(f)$ to be zero. Indeed, the filtration on the chain complexes precisely follows the front $\pi_F(\Lambda)$. However, the authors of the present article do not know the answer to the following question.

\begin{question} Let $L \subset T^*M$ be a nearby Lagrangian with moderate tangencies and $f:M \times \bR^N \to \bR$ a generating function of tube type for $L$. Must it be the case that $\tau_{2k}(f) \in \text{im}(p_k)$? \end{question}

An affirmative answer would follow from the nearby Lagrangian conjecture. Therefore a negative answer would disprove the nearby Lagrangian conjecture. 

In the future work \cite{AIS} we show that the answer is affirmative if we assume that the front of $L$ carries a {\em restricted ruling}, which is a higher dimensional version of the familiar notion of a ruling of a 1-dimensional Legendrian front. However, our notion of a {\em restricted ruling} is rather restrictive, and does not permit higher codimension bifurcations, which would be inevitable in any sufficiently general notion of higher-dimensional ruling, even under the assumption that $L \subset T^*M$ has moderate tangencies. Here {\em sufficiently general} is taken to mean such that the fiberwise Morse theory of a generic generating function of tube type $f:M \times \bR^N \to \bR$ with moderate singularities would produce a family of barcodes $B_m$, $m \in M$, whose upper endpoints satisfies the conjectural definition of a ruling.

\subsection{Nearby Lagrangian homotopy spheres}

We now discuss the special case of nearby Lagrangian homotopy spheres. First, note that for $L \subset T^*M$ a nearby Lagrangian, the projection $L \to M$ is a homotopy equivalence and hence if $M$ is a homotopy sphere then $L$ is also a homotopy sphere. The stable Gauss map of any nearby Lagrangian was shown to be zero on homotopy groups in \cite{ACGK} and hence when $L$ has the homotopy type of a sphere the map $L \to U/O$ is nullhomotopic. Combining this input with the the h-principle for the simplification of singularities of Lagrangian fronts \cite{AG18a,AG18b} the first author and Darrow proved the following result. 

\begin{theorem}[\cite{AD22}]\label{thm: near spheres sing}
If $L \subset T^*M$ is a nearby Lagrangian homotopy sphere, then $L$ is Hamiltonian isotopic to a nearby Lagrangian homotopy sphere $L' \subset T^*M$ with moderate tangencies. 
\end{theorem}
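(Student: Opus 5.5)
The plan is to reduce to the h-principle for simplification of singularities of Lagrangian fronts (in the form of \cite{AG18a,AG18b}), using the fact recalled just above that the stable Gauss map $L \to U/O$ is nullhomotopic when $L$ is a homotopy sphere. The target is a Hamiltonian isotopy $L_t$ from $L$ to an exact Lagrangian $L'$ whose tangencies with the foliation by cotangent fibres $T^*_mM$ are all of fold type. The isotopy is to be realized by a compactly supported Hamiltonian deformation of $T^*M$.

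The first step is purely homotopical. The h-principle of \cite{AG18a,AG18b} says that a Lagrangian (or Legendrian) can be deformed by Hamiltonian isotopy to have only fold tangencies, with a prescribed Maslov co-oriented fold locus, provided the tangential homotopy obstruction vanishes. Concretely, the Lagrangian Grassmannian-valued map $L \to \Lambda(T^*M)$ must be homotopic, through maps in the correct formal class, to a field of Lagrangian planes whose tangencies with the vertical distribution $\nu$ are folds along a co-oriented hypersurface. Stably, this is the question of whether the difference between the polarizations $T L$ and $\nu|_L$ lifts to the space of polarizations with only fold-type degeneracies. For that we use the nullhomotopy of $L \to U/O$ together with $\mu_1=0$: by Proposition \ref{prop: simp stab triv} in reverse, a bounding chain $(S,\nu)$ is the formal datum, and we need to produce one.

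So the second step chooses the formal data explicitly. Take $S \subset L$ to be a small embedded sphere $S^{n-1}$, or more likely a union of parallel spheres bounding a disk. Co-orient $S$ so that it bounds, and check that the induced formal fold field is homotopic to the actual Gauss data. The difference between the two lies in $[L,U/O] = \pi_n(U/O)$, since $L$ is a homotopy sphere and the lower skeleta are handled by connectivity. This group is trivial by \cite{ACGK}, because $L \to U/O$ vanishes on homotopy groups. A possibly nontrivial unstable discrepancy can be absorbed by allowing enough nested parallel fold spheres (pagoda-type fronts), as in \cite{AG18b}. Passing from stable to unstable triviality in dimension $n$ is the main obstacle: one must check that the relevant unstable group, the difference of $\pi_n$ of the Lagrangian Grassmannian $U_n/O_n$ and of the fold-formal space, imposes no further condition. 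I would handle it by exploiting the freedom to choose the number and co-orientations of the nested spheres, whose contributions generate the kernel of stabilization.

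The final step applies the h-principle relative to the formal homotopy to obtain $L'$. We then note that the isotopy can be taken through exact Lagrangians, and is therefore Hamiltonian, because the construction in \cite{AG18a} is by a $C^0$-small ambient Hamiltonian isotopy. Since $L'$ is Hamiltonian isotopic to $L$, it is again a nearby Lagrangian homotopy sphere with moderate tangencies.
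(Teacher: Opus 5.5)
Your outline follows the paper's route. The statement is imported from \cite{AD22}, which combines the vanishing of the stable Gauss map of nearby Lagrangian homotopy spheres \cite{ACGK} with the h-principle for the simplification of caustics \cite{AG18a,AG18b}; the stable-to-unstable step you flag as the main obstacle is exactly the homotopy-theoretic work of \cite{AD22}, which your nested-spheres remark only sketches. One slip: $\pi_n(U/O)$ is not trivial in general (it is $\bZ$ for $n\equiv 1 \bmod 4$), so the stable difference vanishes not because the group is zero, but because $L\to U/O$ is null by \cite{ACGK} and the formal fold field is stably null via its bounding chain.
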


Hence for nearby Lagrangian homotopy spheres $L \subset T^*M$ we may always assume from the onset that $L$ has moderate tangencies, so in particular as discussed in Section \ref{sec: gen nl mod} for any generating function $f:M \times \bR^N \to \bR$ of tube type we may compute the corresponding higher torsion directly from the parametrized Morse theory of $f$.

Let us put $M=S^n$ for concreteness, and note that unless $n=4k$ all higher torsion classes of tube bundles are trivial. Note that any tube bundle over $S^{4k}$ is orientable. For a tube bundle $W \to S^{4k}$ there is exactly one higher torsion class which might be nontrivial, namely $\tau_{2k}(W) \in H^{4k}(S^{4k};\bR)$. So for a nearby Lagrangian $L \subset T^*S^{4k}$ the tube torsion $\tau(L)$ and nearby Lagrangian torsion $\nu(L)$ are subsets of $H^{4k}(S^{4k};\bR) \simeq \bR$, where the isomorphism is by pairing with the fundamental class. 

Since $KO^0(S^4) \simeq \bZ$, the image of $p:KO^0(S^4) \to H^{4k}(S^{4k};\bR)$ is of the form $\lambda_k \bZ \subset \bR$, namely $\lambda_k=p_k(E)$ for $E \to S^{4k}$ a generator. So $\nu(L) \in \bR/\lambda_k \bZ$ is just a real number modulo $\lambda_k \bZ$ and $\tau(L) \subset \bR/\lambda_k \bZ$ is just a collection of real numbers modulo $\lambda_k \bZ$. Let $J_k$ denote the index of the monomorphism $\bZ \to \bZ$ given by
$$ \pi_{4k} BO  \to (\pi_{4k} \cT_\infty )/ \text{Tors}  $$
As noted in Section \ref{sec: SpaceOfTubes}, higher torsion determines a monomorphism $ \bZ/J_k \bZ \to \bR / \lambda_k \bR $ given by
$$ \left (( \pi_{4k} \cT_\infty) / \text{Tors}  \right) /  \pi_{4k} BO  \to \bR/\lambda_k \bZ ,$$
and hence any tube bundle $W \to S^{4k}$ whose class in the quotient 
$$  \left ( (\pi_{4k} \cT_\infty) / \text{Tors}  \right) /  \pi_{4k} BO    $$
is non-trivial has a nontrivial higher torsion, i.e. $\tau_{2k}(W) \not\in \text{im}(p)$. With respect to the above identifications the possible values of higher torsion for any tube bundle $W \to S^{4k}$ are
$$ \tau_{2k}(W) \in \left\{ 0, \frac{1}{J_k} \lambda_k , \frac{2}{J_k} \lambda_k , \cdots \frac{J_k-1}{J_k} \lambda_k  \right\}  \simeq \bZ / J_k \bZ , \qquad  \frac{j}{J_k} \lambda_k \leftrightarrow j+J_k\bZ $$

We summarize the state of affairs in the following theorem, all of which has already been proved.

\begin{theorem} Let $L \subset T^*S^{4k}$ be any nearby Lagrangian homotopy sphere.
\begin{enumerate}
\item to $L$ is assigned its nearby Lagrangian torsion $\nu(L) \in \bZ/J_k\bZ$
\item $\nu(L)$ is a Hamiltonian isotopy invariant of $L$.
\item after an initial Hamiltonian isotopy we may assume that $L$ has moderate tangencies.
\item then $\nu(L)$ is computed by $\tau_{2k}(f)$ for $f$ any natural generating function of tube type for $L$. 
\item it suffices to consider framed natural generating functions of tube type.
\item $\nu(L)=0+J_k \bZ$ for $L=S^{4k} \subset T^*S^{4k}$ the zero section.
\item $\nu(L) \subset \tau(L)$ for a finite subset $\tau(L) \subset \bZ/J_k\bZ$, the tube torsion of $L$.
\item  $\tau(L)$ is computed by the collection of all $\tau_{2k}(f)$ for $f$ a generating function of tube type for $L$. 
\item it suffices to consider framed generating functions of tube type.
\item $\tau(L)$ is a Legendrian isotopy invariant of the Legendrian lift of $L$.
\item $\tau(L)=\{0+J_k \bZ\}$  for $L=S^{4k} \subset T^*S^{4k}$ the zero section.
\end{enumerate}
\end{theorem}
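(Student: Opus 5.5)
Since the statement is a summary of results already established, the plan is to derive each item from an earlier result, after setting up the identifications once. For $M=S^{4k}$, pairing with the fundamental class identifies $H^{4k}(S^{4k};\bR)\simeq\bR$, and $\text{im}(p)=\lambda_k\bZ$. Using the rational equivalence $BO\to\cT_\infty$ of B\"okstedt and the definition of $J_k$, higher torsion induces the monomorphism $((\pi_{4k}\cT_\infty)/\text{Tors})/\pi_{4k}BO\cong\bZ/J_k\bZ\to\bR/\lambda_k\bZ$. So every value $-\tau_{2k}(W)$ of an (automatically orientable) tube bundle $W\to S^{4k}$ lies in the finite subgroup $\frac{1}{J_k}\lambda_k\bZ/\lambda_k\bZ\cong\bZ/J_k\bZ$. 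Torsion elements of $\pi_{4k}\cT_\infty$ have zero torsion, since the torsion classes are real cohomology classes and hence additive and vanish on finite-order elements. This gives the landing space in items (1) and (7), and it also gives finiteness of $\tau(L)$.

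For items (1), (2), (6), (7), (10) and (11), I would invoke the theorems on well-definedness and cosets. Theorem \ref{thm: tube tor Lag spheres} shows that $\tau(L)$ is nonempty and a Legendrian isotopy invariant, which gives (10). Theorem \ref{thm: main 2} shows that $\nu(L)$ is a single coset (giving (1)), that it is a Hamiltonian isotopy invariant (giving (2)), and that it is trivial for the zero section (giving (6)). Theorem \ref{thm: main 1}(3) gives (11). The inclusion $\nu(L)\subset\tau(L)$ holds by definition, since the natural coset consists of tube bundles on which $L$ admits a generating function.

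Item (3) is Theorem \ref{thm: near spheres sing}. Since $\nu$ and $\tau$ are Hamiltonian and Legendrian isotopy invariants respectively, we may replace $L$ by the resulting $L'$ with moderate tangencies. Items (8) and (4) then follow from Proposition \ref{prop: torsion for special Leg}, because $L\to S^{4k}$ is a homotopy equivalence and the front has only cusps. For (4), one must also check that if $f$ is natural then $T(f)$ lies in the natural coset, which is noted in the remark after Definition \ref{def: natural}; the first part of the proof of Proposition \ref{prop: torsion for special Leg} then identifies $\tau_{2k}(f)$ with $-\tau_{2k}(T(f))$ modulo $\text{im}(p)$. Items (5) and (9) follow from Proposition \ref{prop: framing for special Leg}, combined with Proposition \ref{prop: effect twisted stab} and the invariance of $\tau_{2k}(f)$ under twisted stabilization.

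The main obstacle is item (5). I must check that twisting a natural generating function by a rigid family $q$ still lands in the natural coset. I would argue this via Proposition \ref{prop: natural coset}: changing the nullhomotopy of the stable Gauss map by an element of $[M,\Omega(U/O)]\cong[M,\bZ\times BO]$ realizes exactly such twisted stabilizations. A subsidiary point is making sure the orientation and even-index normalizations do not introduce sign ambiguities; these are handled by Proposition \ref{prop: or aut}.
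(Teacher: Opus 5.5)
Your proposal is correct and matches the paper. The paper proves this summary theorem simply by noting that every item was already established: Theorems \ref{thm: main 1}, \ref{thm: main 2} and \ref{thm: near spheres sing}, Propositions \ref{prop: torsion for special Leg} and \ref{prop: framing for special Leg}, and the preceding identification of the possible torsion values with $\bZ/J_k\bZ$, assembled item by item exactly as you do. Your flagged obstacle in item (5) is handled the same way in the paper: twisted stabilization corresponds to changing the nullhomotopy of the Gauss map, and in any case the natural coset is by definition a coset of the image of the rigid tube map, so twisted stabilization preserves it.
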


The simplest closed manifold on whose cotangent bundle one might search for a counterexample to the nearby Lagrangian conjecture using the theory of higher torsion is $S^4$. The first step would be to understand an explicit Legendrian $\Lambda \subset J^1S^4$ with nontrivial tube torsion, i.e. not in $\text{im}(p_1)$. Presumably one has an explicit function of tube $f:S^4 \times \bR^N \to \bR$ and understands the 4-parameter family of fiberwise Morse chain complexes enough to compute $\tau_{2}(f)\in H^4(S^4;\bR)$ and identify it as not in $\text{im}(p_1)$.  Then one  might try to modify the generating function $f$ to remove whatever Reeb chords are present in $\Lambda$ and, if lucky, arrive at a suitable generating function of tube type $g:S^4 \times \bR^N \to \bR$ which agrees with $f$ at infinity and which generates a Legendrian with no Reeb chords, i.e. a nearby Lagrangian $L \subset T^*S^4$.  Since the underlying tube bundle of $f$ and $g$ are the same, the class $\tau_2(f)$ is in $\tau(L)$, and hence $\tau(L) \neq \text{im}(p_1)$ so $L$ is the desired counterexample to the nearby Lagrangian conjecture. Of course, {\em one doesn't simply remove Reeb chords}, so the above strategy is highly dubious. However, it would still be interesting to understand an explicit Legendrian $\Lambda \subset J^1S^4$ with nontrivial tube torsion.

\subsection{Floer theoretic perspective}\label{sec: hol}

The nearby Lagrangian torsion of a nearby Lagrangian $L \subset T^*M$ can probably be defined via Floer theory, loosely speaking by considering the family of Floer cochain complexes $CF^*(T^*_mM,L)$, $m \in M$, and upgrading the family Floer perspective to the K-theoretic level. Somewhat more concretely, if the tangencies of $L$ with respect to the foliation by cotangent fibres $T^*_mM$, $m \in M$, then after a suitable bifurcation analysis in exact Lagrangian Floer theory, analogous to the corresponding analysis in parametrized Morse theory and generalizing the 1 and 2 parametric case treated in the work of the third author \cite{S02}, one may expect to use the family of Floer cochain complexes $CF^*(T^*_m M,\varphi_1(L))$, $m \in S^{4k}$, to define a map $M \to  |\text{\em Wh}_\bullet^h(\bC,1)|$.

\begin{conjecture}
Let $L \subset T^*M$ be a nearby Lagrangian with moderate tangencies with respect to the foliation by cotangent fibres. The family of Floer cochain complexes $CF^*(T^*_xS^{4k},\varphi_1(L))$, $x \in S^{4k}$ determines a map $S^{4k} \to  |\text{\em Wh}_\bullet^h(\bC,1)|$ whose homotopy class is independent of auxiliary choices (such as a suitable almost complex structure). 
\end{conjecture}

It is not known whether any nearby Lagrangian is Hamiltonian isotopic to one with moderate tangencies. A necessary condition for this to be possible is that the stable Gauss map is trivial. Let us concentrate on the case where $M=S^{4k}$, since the condition on the stable triviality of the Gauss map is automatically satisfied by \cite{ACGK} and moreover this condition is also sufficient for the simplification of tangencies to be possible \cite{AD22}. 

Given a Lagrangian homotopy sphere $L \subset T^*S^{4k}$, fix a choice $\gamma_t$ of nullhomotopy of the stable Gauss map $L \to U/O$. Then Theorem \ref{thm: near spheres sing} as formulated in \cite{AD22} yields a Hamiltonian isotopy $\varphi_t$ such that the singularities of tangency the Lagrangian $K=\varphi_1(L)$ with respect to the foliation by cotangent fibers are all of fold type. Although the nearby Lagrangian $K$ produced by \cite{AD22} does not depend uniquely on the nullhomotopy $\gamma_t$ (there are many choices involved), the parametric h-principle for the simplification of caustics \cite{AG18a,AG18b} implies that $K$ is unique up to a Hamiltonian isotopy in which the tangencies remain moderate (birth/death is allowed along double folds). In particular it is natural to expect the following.

\begin{conjecture} The homotopy class of map $S^{4k} \to  |\text{\em Wh}_\bullet^h(\bC,1)|$ corresponding to the nearby Lagrangian with moderate tangencies $K=\varphi_1(L)$ only depends on the nullhomotopy $\gamma_t$ of the stable Gauss map $L \to U/O$.
\end{conjecture}

By pullback of the universal class one therefore obtains a cohomology class $\tau_{2k}(L,\gamma_t) \in H^{4k}(S^{4k},\bR)$ associated to any nullhomotopy $\gamma_t$ of the stable Gauss map $L \to U/O$. Two different choices of nullhomotopy $\gamma_t$ will differ by a map $L \to \Omega (U/O) = \bZ \times BO$. From analogy with the theory developed in the present article, it is natural to expect the following additional property to hold:

\begin{conjecture} The  cohomology classes $\tau_{2k}(L,\gamma_t^i) \in H^{4k}(S^{4k};\bZ)$, $i=1,2$, corresponding to two different nullhomotopies $\gamma_t^0$ and $\gamma_t^1$of $L \to U/O$ differ by $p_k(E)$ for $E \to S^{4k}$ the stable real vector bundle over $M$ classified by the map $S^{4k} \to  BO$ which is the composition of
\begin{enumerate}
\item a homotopy inverse $S^{4k} \to L$ for the projection $L \to S^{4k}$.
\item the map $L \to \Omega(U/O)$ obtained by concatenating $\gamma_{1-t}^0$ with $\gamma_t^1$ at each point of $L$ to obtain a loop in $U/O$.
\item the Bott periodicity map $\Omega(U/O) \simeq \bZ \times BO$ as in \cite{G90}.
\item the forgetful map $\bZ \times BO \to BO$.
\end{enumerate}  
\end{conjecture}

Thus one would obtain a well-defined coset of $\text{im}(p_k) \subset H^{4k}(S^{4k}, \bR)$,  i.e. a Floer-theoretic nearby Lagrangian torsion $\nu^{\text{FL}}(L) \in \bR/\lambda_k\bZ$ which is invariant under Hamiltonian isotopy of $L \subset T^*S^{4k}$. 

\begin{conjecture} This coset is the same as the nearby Lagrangian torsion, i.e. $\nu^{\text{FL}}(L)=\nu(L)$.\end{conjecture}

It would be interesting to know if there is a Floer theoretic proof of the  triviality of nearby Lagrangian torsion, as predicted by the nearby Lagrangian conjecture. Indeed, one might expect the Floer theoretic setup to be more sensitive to the property of being an exact Lagrangian submanifold than the Morse theoretic approach via generating functions. 
In the Floer set-up, the embeddedness implies there are no pseudo-holomorphic disks with a single boundary puncture that arise in as limiting-bubbles in Gromov compactness. Algebraically, this enables one to avoid relying on so-called augmentations from Legendrian/symplectic field theory.
In contrast, it is far from clear how the lack of Reeb chords is related to the paramerized Morse theory of a generating function, which after all is a fiberwise phenomenon. 
Indeed, the Morse-theoretic theory of higher torsion via generating functions makes sense for Legendrians in $J^1M$, whether or not they have Reeb chords.

\appendix

\section{Moderate singularities}\label{sec: moderate singularities}

\subsection{Functions with moderate singularities}

For $F$ a smooth manifold, a function $f:F \to \bR$ has {\em moderate singularities} if its singularities are modeled on either the Morse normal form $$\sum_{i\leq k}x_i^2-\sum_{i>k}x_i^2$$ or on the cubic (birth/death) form $$x_1^3+\sum_{1<i\leq k}x_i^2-\sum_{i>k}x_i^2.$$ 
Note that at a birth/death point of $f$, the kernel of the hessian $d^2f$ is one dimensional and is trivialized by the direction in which $d^3f$ is positive. We call this the {\em canonical co-orientation}. See Figure \ref{Figure1904}.

{
\begin{figure}[htbp]
\begin{center}
\begin{tikzpicture}[scale=.7]
%
\begin{scope}
\draw[thick] (1,0.3)..controls (1.6,2.3) and (3.6,.1)..(3.9,2.1); 
\draw[thick] (1,0.3)..controls (1,-.5) and (3,-.5)..(5,-1); 
\draw[thick] (5,-1)--(5,1.3)..controls (4.4,1.5) and ( 3.9,1.6)..(3.9,2.1)..controls (3.9,2.5) and (5,2.5) .. (6,2.3)--(6,0); 
\draw[dashed,thick] (1,0.3)..controls (1,.5) and (2,.7)..(5,0.16);
\draw[thick] (5,0.16)--(6,0);
\draw (2.9,1) node{$\bullet$};
\draw[thick,dashed,->] (2.9,1)--(5.3,.7);
\draw (7.5,1.2) node[right]{Birth-death with canonical co-orientation};
\clip (5,0) rectangle (5.6,1);
\draw[thick,->] (2.9,1)--(5.3,.7);

\end{scope}
\begin{scope}[yshift=5cm,xshift=.5cm]
\draw (2.57,1.17) node{$\bullet$};
\draw[thick] (-.4,0)..controls (0,.7) and (0,1.3)..(0,2)..controls (1,2.1) and (1.5,2.1)..(1.5,2.5)..controls (1.5,2.7) and (1,3).. (.7,3)--(.7,2.1); 
\draw[thick] (1.5,2.4)..controls (1.5,1) and (3.9,1)..(3.9,2.1); 
\draw[thick] (-.4,0)..controls (1,0) and (3,-.5)..(5,-1); 
\draw[thick] (5,-1)--(5,1.3)..controls (4.4,1.5) and ( 3.9,1.6)..(3.9,2.1)..controls (3.9,2.5) and (5,2.5) .. (6,2.3)--(6,0); 
\draw[dashed,thick] (.7,2.1)--(.7,.8)--(5,0.13);
\draw[thick] (5,0.13)--(6,0);
\draw (7.5,1.5) node[right]{Morse point};
\end{scope}
%
\end{tikzpicture}
\caption{Morse and birth-death points.} 
\label{Figure1904}
\end{center}
\end{figure}
}

For $F \to W \to M$ a fiber bundle, a function $f:W \to \bR$ has {\em moderate singularities} if each restriction $f|_m:F_m \to \bR$ to the fiber over $m \in M$ has moderate singularities and if moreover the fibered singularities  of $f$ over $M$ 
\[
\begin{tikzcd}
W \arrow[dr]  \arrow[rr, "f"] &    & M \times \bR \arrow[dl] \\
& M & 
\end{tikzcd}
\]
are modeled on either a constant family of Morse critical points
$$ (t_1,\ldots,t_m,x_1,\ldots,x_N)\mapsto  (t_1,\ldots,t_m, \sum_{i\leq k}x_i^2-\sum_{i>k}x_i^2) $$
or a fibered birth/death of Morse critical points 
$$(t_1,\ldots,t_m, x_1,\ldots,x_N) \mapsto (t_1,\ldots, t_m, x_1^3+3t_1x_1 + \sum_{1<i\leq k}x_i^2 - \sum_{i >k}x_i^2 )$$
with both families fibered over $(t_1,\ldots,t_m)$ with respect to the obvious projection. If $M$ has boundary we will allow one-half of the above fibered models, i.e. restricting the fibration to a half space of the base. 

\subsection{The stable bundle}

Let $F \to W \to M$ be a fiber bundle and let $f:W \to \bR$ be a function with moderate singularities. The fiberwise singular set $\Sigma_f \subset W$ is a smooth submanifold of $W$ of the same dimension as $M$. It has a stratification $\Sigma_f = \bigcup_k \Sigma_{f,k}$ where $\Sigma_{f,k}$ is the closure of the set of fiberwise Morse critical points of index $k$. Let $C_{f,k} \subset \Lambda_f$ consist of the set of fiberwise cubic (birth/death) critical points of $f$ of index $k$ and $k+1$. So the boundary of $\Lambda_{f,k}$ consists of the disjoint union of $C_{f,k-1}$ and $C_{f,k}$. 

On the interior of $\Sigma_{f,k}$ we have a rank $k$ real vector bundle $\gamma_{f,k} \to \Sigma_{f,k}$ consisting of the negative eigenspace of the fiberwise Hessian along the fiberwise critical locus. Each $\gamma_{f,k}$ extends to the boundary $C_{f,k} \coprod C_{f,k-1}$ of $\Sigma_{f,k}$ as sub-bundles of the fiberwise tangent bundle of $W$, but along $C_{f,k}$ the dimensions of $\gamma_{f,k}$ and $\gamma_{f,k+1}$ jump by one. More precisely, we have the relation $\gamma_{f,k}\oplus \nu_k= \gamma_{f,k+1}$ where $\nu_k$ is the kernel of the Hessian, a rank 1 bundle along $C_{f,k}$. So we may form a vector bundle over $\Sigma_f=\bigcup_k \Sigma_{f,k}$ by using the canonical co-orientation to glue the vector bundles $\gamma_{f,k} \oplus \ve^{N-k}$ on $\Sigma_{f,k}$, where $\ve$ denotes the trivial rank 1 bundle, to a rank $N$ vector bundle on $\Sigma_f$. More precisely, the canonical co-orientation gives a trivialization $\nu_k \simeq \ve$ and so we may glue via the vector bundle isomorphism over $C_{f,k}$ given by
$$ \gamma_{f,k} \oplus \ve^{N-k} \simeq \gamma_{f,k} \oplus \nu_k \oplus \ve^{N-k-1} \simeq \gamma_{f,k+1} \oplus \ve^{N-k-1} .$$
We denote the stable class of this real vector bundle over $\Sigma_f$ by $\gamma_f$ and will refer to it as the {\em stable bundle} of $f$. 

\subsection{Framed functions}

For $F \to W \to M$ a fiber bundle, a framed function $f:W \to \bR$ is roughly speaking a function with moderate singularities whose stable bundle $\gamma_f$ is trivial and moreover equipped with a trivialization. More precisely, one requires the existence of isomorphisms $$\varphi_k: \gamma_{f,k} \xrightarrow{\simeq} \ve^k$$ which are compatible with the above gluing maps. We will refer to the following result as the {\em framed function theorem}.

\begin{theorem}\label{thm: framed function}
For $F \to W \to M$ a fiber bundle, given a closed subset $A \subset W$ and a framed function $g:Op(A) \to \bR$ there exists a framed function $f:W \to \bR$ which agrees with $g$ on $Op(A)$. 
\end{theorem}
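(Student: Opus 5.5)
The plan follows the two-stage strategy of Igusa's original proof, in the streamlined form of Eliashberg--Mishachev: first produce a relative function with moderate singularities, then correct its stable bundle by local surgery on the fiberwise singular set.

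\emph{Stage one: moderate singularities.} Extend $g$ from $Op(A)$ to a function $f_0:W\to\bR$ with moderate singularities that agrees with $g$ near $A$. Start from an arbitrary smooth extension. Its vertical differential $d_vf_0$ is a section of $\text{Vert}^*$; away from $A$ it can be assumed transverse to zero only after allowing fiberwise singularities of higher corank. To reduce these to Morse and birth/death type I will apply the wrinkling theorem of Eliashberg--Mishachev relative to $A$. Its hypothesis asks that $d_vf_0$ be homotopic rel $Op(A)$ to a nowhere-vanishing section, which fails for closed fibers. So I first stabilize: pass to $W\times\bR^{4N}$ with $f_0+\|x\|^2-\|y\|^2$, or, what suffices, work in a neighborhood of a section over the complement of $A$ where a model critical point is inserted. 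Then $d_vf_0$ differs from a fixed model section only on a region where it can be made nonvanishing. The wrinkling theorem then outputs a fibered wrinkled function, and wrinkles have only Morse and birth/death fibered singularities. The result is a function with moderate singularities equal to $g$ on $Op(A)$.

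\emph{Stage two: killing the framing obstruction.} The stable bundle $\gamma_{f_0}$ is a stable vector bundle over $\Sigma_{f_0}$ carrying a trivialization over $\Sigma_{f_0}\cap Op(A)$ from the framing of $g$. The obstruction to extending the trivialization is a relative class $[\Sigma_{f_0},\Sigma_{f_0}\cap Op(A);BO]$. I would kill it in two steps.
\begin{enumerate}
\item Cut it down by cells: working skeleton by skeleton on a triangulation of $M\setminus Op(\pi(A))$, the obstruction on each top cell lies in $\pi_{j-1}O$.
\item Kill each cell's obstruction by a local modification. Inside a small ball $D^j\subset M$, introduce a fibered family of birth/death pairs of index $(k,k+1)$. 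This is a ``lens'' or dragon-tail modification whose new critical sheet is a sphere $S^j$ over $D^j$ (folding along $\partial$). The Hessian kernel direction along this sphere can be twisted by any clutching map in $\pi_{j-1}O$, so the new sheet carries a stable bundle realizing an arbitrary prescribed class.
\end{enumerate}
Connect-summing (via a further birth/death surgery) this new sheet with the existing sheet $\Sigma_{f_0,k}$ over the cell then changes the restriction of $\gamma$ by the prescribed class, and so cancels the obstruction. Everything is supported away from $A$, so agreement with $g$ on $Op(A)$ is preserved. Compatibility with the gluing maps $\gamma_{f,k}\oplus\nu_k\simeq\gamma_{f,k+1}$ is automatic once the canonical co-orientation is used for the new folds.

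The main obstacle is step~2 of stage two, since $\Sigma_f$ is not a section of $W\to M$. The obstruction lives on $\Sigma_f$, and the naive cellwise argument over $M$ must be made compatible with the fold loci $C_{f,k}$, where $\gamma_{f,k}$ and $\gamma_{f,k+1}$ are identified. I would try first to reduce to the case where every sheet $\Sigma_{f,k}$ is a graph over its image with folds only along nested hypersurfaces, using the freedom in the wrinkling theorem. The clutching construction can then be done sheet by sheet, starting from the sheet of lowest index and proceeding upward, with the obstruction on each sheet pushed across its fold boundary to the next one. This transfer across folds is exactly Igusa's bookkeeping, and it is where I expect the real technical work to be.
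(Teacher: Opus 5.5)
The step that fails is Stage two, item 2. A lens (wrinkle) inserted over a ball $D^j\subset M$, in a region of the fibres where the function had no critical points, cannot carry an arbitrary class in $\pi_j BO$. Along the cusp equator the Hessian kernel is canonically trivialized by the co-orientation. So any twist must come from how the negative eigenspaces vary over the two hemispheres, and the framing principle, Theorem \ref{thm: framing}, constrains this.

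Apply Theorem \ref{thm: framing} to the trivial bundle pair over $D^j/\partial D^j\cong S^j$ whose fibre is the small ball you modify (as in Appendix \ref{sec: rel torsion}):
\begin{itemize}
\item The torsion of this trivial bundle pair is zero.
\item The fibrewise Morse complex is the constant acyclic complex $\bZ\xrightarrow{\pm 1}\bZ$ over the interior, with an elementary expansion at the boundary, so $\tau(f)=0$.
\item For $j=4k$ the alternating push-down gives $\langle\pi_*^{\Sigma f}p_k(\gamma_f),[D^{4k},\partial D^{4k}]\rangle=\pm\langle p_k(\gamma),[\Sigma_f]\rangle$, where $\Sigma_f\cong S^{4k}$ is the wrinkle sphere.
\end{itemize}
Hence $p_k(\gamma)=0$, so $\gamma$ is trivial in $\pi_{4k}BO\cong\bZ$. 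A wrinkle twisted by a generator does exist, but only on Hatcher's exotic disk bundle $E\to S^{4k}$, whose higher torsion is nonzero (see \cite{I02}).

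The same formula shows the difficulty is structural, not specific to your move. Any modification that changes $\pi_*^{\Sigma f}p(\gamma_f)$ must change the torsion $\tau(f)$ of the family of Morse complexes by the opposite amount. On $E$ the twisted-wrinkle function $f_0$ has $\tau(f_0)=0\neq\tau(E)$. So every framed function on $E$ must have a K-theoretically nontrivial family of fibrewise Morse complexes. Lens insertions, connected sums of sheets and transfers of the obstruction across folds give you no way to produce one. Producing such families is the real content of the theorem. The paper does not reprove it; it quotes the Extension Theorem 1.1.1 of \cite{EM12}, which strengthens \cite{I87}.

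Stage one also contains an error, though a citation repairs it. Stabilizing by $\|x\|^2-\|y\|^2$ does not help you meet the hypothesis of Theorem \ref{thm: wrinkling}. With the quadratic behaviour at infinity fixed, the signed count of zeros of $d_v(f_0+\|x\|^2-\|y\|^2)$ in each fibre is still $\chi(F)$, so the obstruction to a nowhere-vanishing extension persists. Inserting a model critical point along a section has three problems: it presupposes that a section exists, it accounts only for $\chi(F)=\pm1$, and it ignores higher obstructions on the complement. The relative existence of functions with moderate singularities is Igusa's theorem on generalized Morse functions for bundles, reproved by wrinkling in \cite{EM00}. You should invoke it as such rather than derive it this way.
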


In particular, for any fiber bundle $F \to W \to M$ framed functions $f:W \to \bR$ exist, and if $f_0$ and $f_1$ are two such then there exists a framed function $F:W \times [0,1] \to \bR$ on the product fiber bundle $F \times [0,1] \to W \times [0,1] \to M \times [0,1]$ such that $F|_{W \times 0} = f_0$ and $F_{W \times 1} = f_1$. 

Theorem \ref{thm: framed function}  in particular implies, and is essentially equivalent to, the fact that a suitable model for the {\em space of framed functions} on the fiber $h:F \to \bR$ is contractible, even in a strong relative form. The high connectivity of the space of framed functions was first established by work of the second author \cite{I87}. The strongest form is given by the Extension Theorem 1.1.1 of \cite{EM12}, slightly reformulated as Theorem \ref{thm: framed function} above. The fact that framed functions may be chosen and are independent of choices is central to the definition of higher torsion invariants. 

We note the inevitability of relying on some result in the theory of simplification of singularities to even establish the existence of functions $f:W \to \bR$ with moderate singularities. This existence is not a priori guaranteed because the condition of having only moderate singularities, though $C^3$-stable, is far from $C^\infty$-generic in general. Indeed, $C^\infty$-generic functions $f:W \to \bR$ will have terrible and completely unclassifiable fiberwise singularities.

\subsection{Wrinkling}

The proof of the framed function theorem \ref{thm: framed function} given in \cite{EM12} uses the {\em wrinkling philosophy} developed by Eliashberg and Mishachev. We will in fact have use for this theory in the proof of Theorem \ref{thm: existence of example} and so we record the relevant statement below.

For $F \to W \to M$ a fiber bundle, a {\em fibered wrinkled function} $f:W \to \bR$ is a fibered function with moderate singularities such that the fiberwise singular locus $\Sigma_f \subset W$ consists of a disjoint union of contractible spheres $S \subset W$, such that the restriction of $f$ to a sufficiently small open neighborhood of each of the spheres $S$ is fibered equivalent to the local model
$$ (y,z,x) \mapsto \left( y, z^3+3(\|y\|^2-1)z - \sum_{i=1}^s x_i^2 + \sum_{i=s+1}^{n-q}x_j^2 \right)$$
$y \in \bR^{q-1}$, $z \in \bR$, $x \in \bR^{n-q}$, which has singularity $S^{q-1} =S^{q-1} \times \{0 \} \subset \bR^q \times \bR^{n-q}$, with equator of cusps $\{\|y\|=1, z=0,x=0\}$. We emphasize that we only require an equivalence between the restriction of $f$ to $Op(S)$ and the restriction of the normal form to $Op(S^{q-1})$. Here and below we use the Gromov notation to denote $Op(A)$ an arbitrarily small but unspecified open neighborhood of $A$, i.e. the {\em germ} of an open neighborhood of $A$.

Note that each such sphere $S$ we have an equator $C \subset S$ (i.e. the pair is diffeomorphic to $S^{n-2} \subset S^{n-1}$, where $n=\dim M$), and that $f$ has Morse critical points on $S \setminus C$ (of Morse indices $k$ and $k+1$ for some $k$) with  cubic birth/death points along $C$. 

If $F \to W \to M$ is a fiber bundle of manifolds, we denote $\text{Vert} \to W$ the vector sub-bundle of $TW$ consisting of vectors tangent to the fibers.

\begin{theorem}[\cite{EM00}, Theorem 2.3]\label{thm: wrinkling}  Let $F \to W \to M$ be a fiber bundle of closed manifolds. Let $A \subset W$ be a closed set. Suppose that a function is given $g:Op(A) \to \bR$ without critical points such that the vertical differential $d_vg$ extends to a nowhere vanishing section of $\text{Vert}^*$ on all of $W$. Then there exists a fibered wrinkled function $f:W \to \bR$ which agrees with $g$ on $Op(A)$.
\end{theorem}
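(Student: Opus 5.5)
This result is quoted from \cite{EM00}, and I would follow the Eliashberg--Mishachev wrinkling strategy. The idea is to treat the nowhere vanishing extension $\alpha$ of $d_vg$ as a \emph{formal} fiberwise nonsingular function. We then produce a genuine function by working locally, paying for the failure of holonomy only with wrinkles. Concretely, first extend $g$ arbitrarily to a smooth $f_0:W\to\bR$ agreeing with $g$ on $Op(A)$. Then fix the nonvanishing section $\alpha$ of $\text{Vert}^*$ with $\alpha=d_vg$ near $A$. The goal is to deform $f_0$, relative to $Op(A)$, to a function whose vertical differential is nonvanishing away from a disjoint union of wrinkle spheres of the prescribed local model.

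\textbf{Step 1 (reduction to a neighborhood of a codimension $\ge 1$ polyhedron plus balls).} I would choose a fine triangulation of $W$, compatible with the projection to $M$ after a small perturbation, and let $K$ be its codimension-one skeleton relative to $A$. By the holonomic approximation theorem of Eliashberg--Mishachev, applied to the formal section $(f_0,\alpha)$ of the relevant jet bundle over $K$, we obtain, after a $C^0$-small diffeotopy $h$ of $W$, a genuine function $f_1$ on $Op(h(K))$ with $d_vf_1$ $C^0$-close to $\alpha$. In particular $d_vf_1$ is nonvanishing there, and this is done relative to $Op(A)$. The complement of $Op(h(K))$ is a disjoint union of small balls $B_i$. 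On each $B_i$ we have a function that is fiberwise nonsingular near $\partial B_i$, together with a nonvanishing formal extension $\alpha|_{B_i}$ of its vertical differential. Because each $B_i$ lies in a chart $\bR^q\times\bR^{n-q}$, where $\bR^q$ is the base direction, the fibration over $B_i$ is a product.

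\textbf{Step 2 (wrinkling lemma on a ball; main obstacle).} The key local statement is the following. Let a function on a ball $B=D^q\times D^{n-q}$, fibered over $D^q$, have nonvanishing vertical differential near $\partial B$, and suppose this differential extends formally to a nonvanishing section. Then the function can be changed, relative to $\partial B$, into one whose only fiberwise singularities are wrinkles. I expect this to be the main difficulty, and I would attack it as follows. After a further subdivision, $\alpha$ is nearly constant on $B$, say close to $dx_1$ along the fibers. The boundary datum then differs from a function with $\partial_{x_1}>0$ only by a bounded amount. I would interpolate along the $x_1$-direction using a function of the form $x_1+\phi$. Where the prescribed increment is negative, replace the monotone interpolation by a zigzag: a cubic profile $z^3+3(\|y\|^2-1)z$ in a coordinate $z$ close to $x_1$, cut off by a bump in the parameters $y$. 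This creates a pair of fold loci joined along a sphere of cusps, which is exactly the wrinkle normal form. The zigzag gives enough slope to make up the deficit while the vertical differential stays nonzero off the wrinkle. The delicate point is to keep the modification fibered over the base coordinates $y$, and to make the cusp equators bound standard spheres in the model. If needed, I would first produce double folds and then round their ends into wrinkles.

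\textbf{Step 3 (assembly).} Finally, glue the functions on the balls to $f_1$ on $Op(h(K))$; this agrees with $g$ on $Op(A)$ by construction. The wrinkles produced in different balls lie in disjoint balls, so they are automatically disjoint embedded spheres with the required local model. Hence $\Sigma_f$ is a disjoint union of wrinkle spheres, and $f$ is a fibered wrinkled function extending $g$.
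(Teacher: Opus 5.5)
The paper gives no proof of this statement; it imports it from \cite{EM00}. Your outline therefore has to stand on its own. Its architecture is the right philosophy: treat $\alpha$ as a formal solution, use holonomic approximation near a codimension-one skeleton relative to $Op(A)$, then extend over the top cells and pay with wrinkles. But Step 2 is the entire content of the wrinkling theorem, and the recipe you give for it does not produce wrinkles.

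\textbf{The transverse fibre directions are not controlled.} Fiberwise critical points are zeros of the \emph{whole} vertical differential $(\partial_{x_1}g,\partial_{x'}g)$, where $x'$ denotes the remaining fibre coordinates. Your zigzag only controls $\partial_{x_1}g$, so on the fold hypersurface $\{\partial_{x_1}g=0\}$ the zeros of $\partial_{x'}g$ are left to chance. You also cannot keep the zigzag independent of $x'$, for two reasons:
\begin{itemize}
\item the region where the $x_1$-increment is negative lives in the space of $x_1$-lines, i.e.\ in the $(y,x')$-coordinates, not in $y$ alone;
\item the zigzag must vanish near the lateral faces of the fibre, where $g$ is prescribed and monotone in $x_1$.
\end{itemize}
So it must be localized in $x'$, and doing this naively fails. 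For $g=z^3+3(\|y\|^2+\|x'\|^2-1)z$ one gets $\partial_{x'}g=6zx'$. Besides the expected sphere $\{x'=0,\ z^2+\|y\|^2=1\}$ there is then a second singular locus $\{z=0,\ \|y\|^2+\|x'\|^2=1\}$. Where the two meet ($\|y\|=1$, $z=0$, $x'=0$) the fibre function is $z^3+3z\|x'\|^2$, a corank $\ge 2$ umbilic-type singularity, not even moderate.

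The wrinkle normal form needs a nondegenerate quadratic form in $x'$, and producing it requires an extra dominating transverse term. For instance, adding $K\|x'\|^2$ with $K>3$ does turn the above into a genuine wrinkle, since the singular set becomes $\{x'=0,\ z^2+\|y\|^2=1\}$ and $K+3z>0$ there. That term must then be cut off inside the cell without destroying $\partial_{x_1}g>0$ and the increment balance along $x_1$-lines. You also have to choose membranes with margin, because a wrinkle's drop $4(1-\|y\|^2)^{3/2}$ vanishes at its equator. Your sketch leaves out exactly this bookkeeping, or its replacement in Eliashberg--Mishachev: creating double folds and then applying the chopping lemma, which you mention only in passing. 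As written, Step 2 is an assertion rather than an argument.

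\textbf{Steps 1 and 2 do not fit together.} The diffeotopy $h$ from holonomic approximation is $C^0$-small but $C^1$-wild, so the leftover cells $B_i$ have corrugated boundaries. The negative increments come from $f_1$ following the arbitrary $f_0$ in $C^0$. The corrugation is what makes this compatible with $df_1\approx\alpha$ on $\partial B_i$. Consequently:
\begin{itemize}
\item the pieces $B_i\cap F_m$ need not be balls or even connected;
\item an $x_1$-line may cross $B_i$ in many segments.
\end{itemize}
Your claim that the fibration over $B_i$ is a product is false; only the ambient chart is a product. So the local statement you formulate on $D^q\times D^{n-q}$, with a single entry point, exit point and prescribed increment per line, does not apply verbatim to the output of Step 1. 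You would need to prove the local lemma for such domains, or organize the reduction so that the cells really are product cubes. Also, the fine subdivision that makes $\alpha$ nearly constant must be fixed before Step 1, since the cells are determined by it.
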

\section{Higher torsion of fiber bundles}\label{sec: higher torsion of bundles}

\subsection{Higher torsion classes}

{\em Higher torsion invariants} are characteristic classes of smooth fiber bundles $F \to W \to M$ of smooth manifolds, defined under suitable conditions. The theory has analytic treatments \cite{BL95, BG01}, homotopy-theoretic treatments \cite{DWW03} and axiomatic treatments \cite{Igu08}, in addition to the Morse-theoretic perspective we adopt in the present article, following the book \cite{I02}.

From the Morse-theoretic viewpoint, a fiber bundle may be studied by considering smooth functions $f:W \to \bR$ and analyzing the parametrized Morse theory of the restrictions $$f_m:=f|_{F_m}: F_m \to \bR$$ of $f$ to the fiber, parametrized by $m \in M$. When $f:W \to \bR$ has moderate singularities in the sense of Appendix \ref{sec: moderate singularities}, after fixing a generic fiberwise metric one may construct the family of Morse chain complexes $C^*(f_m:\bZ[\pi_1W])$ where we may take the coefficients in $\bZ[\pi_1 W]$. In this appendix we briefly recall how this data may be used to yield the higher torsion classes.

\subsection{The Whitehead category} 
The natural home for the parametrized family of Morse chain complexes is the {\em Whitehead category}. For $R$ an associative ring and $G$ a subgroup of the units of $R$, there is a simplicial category $ \text{\em Wh}_\bullet(R,G)$, called the {\em Whitehead category}, whose objects in degree $k$ are $\Delta^k$-families of filtered finite free chain complexes over $R$ with the basis only well-defined up to permutation and elements of $G$, i.e., up to monomial change of basis \cite[Def 2.7, Def 2.16]{I05}. We add a category direction giving elementary expansions of the chain complex. This comes from the birth-death points in the family of generalized Morse functions.

Given a bundle of smooth manifolds $F \to W \to M$, a fiberwise generalized Morse function $f:W \to \bR$, and a generic choice of vertical metric, the data $C^*(f_m:\bZ[\pi_1W])$, $m \in M$, amounts to a map
$$  M \to | \text{\em Wh}_\bullet(\bZ[\pi_1W],\pi_1W)|.$$
If the action of $\pi_1 M$ on $H_*(F_m;R)$ is trivial, then a fiberwise mapping cone construction will make each fiber complex $C^*(f_m:\bZ[\pi_1W])$ contractible and thus gives a mapping
$$ \xi:M \to |\text{\em Wh}_\bullet^h(\bZ[\pi_1W],\pi_1W)| $$
well-defined up to homotopy, where $\text{\em Wh}_\bullet^h(\bZ[\pi_1W],\pi_1W)$ is the simplicial full subcategory of $ \text{\em Wh}_\bullet(\bZ[\pi_1W],\pi_1W)$ whose objects in degree $k$ are $\Delta^k$-families of the acyclic chain complexes. The map $\xi$ comes from a simplicial map
\[
    \xi_\bullet: simp\,M\to \text{\em Wh}_\bullet^h(\bZ[\pi_1W],\pi_1W)
\]
taking (small) smooth $k$-simplices in $M$ to objects of $\text{\em Wh}_k^h(\bZ[\pi_1W],\pi_1W)$. When $R = \bQ$ or $\bC$, the construction of higher torsion extends to the case then $\pi_1M$
acts unipotently on the fiber homology $H_*(F_m;R)$, by which we mean that the $\pi_1M$-module $H_*(F_m;R)$ admits a filtration by $\pi_1M$ submodules so that the action of $\pi_1M$ on the successive subquotients is trivial.

At this point we are very close to defining higher torsion. Indeed we have
$$\text{Wh}_1(G) = \pi_0 \text{\em Wh}_\bullet^h(\bZ[G],G) $$
for the classical Whitehead group $\text{Wh}_1(G)$, so $|\text{\em Wh}_\bullet^h(\bZ[G],G)|$ may be thought of as a space-level generalization of Whitehead torsion. More precisely, it was shown by the second author and J. Klein that there is a homotopy fiber sequence
$$|\text{\em Wh}_\bullet^h(R,G)| \to \Omega^\infty \Sigma^\infty(BG_+) \to\ \bZ \times  BGL(R)^+. $$
For example, if $G$ is finite then $\Omega^\infty \Sigma^\infty(BG_+)$ is rationally trivial above degree zero and hence $|\text{\em Wh}_\bullet^h(R,G)|$ has the same rational homotopy type as $\Omega BGL(R)^+$. If $R = \bC$ and $K(\bC)=\bZ \times BGL(\bC)^+$, we get a rational equivalence
$$ |\text{\em Wh}_\bullet^h(\bC,1)| \simeq_\bQ \Omega K(\bC).$$
We use the Borel regulator maps
$$b_k:K_{2k+1}\bC \to  \bR.$$
These are given by cap product with continuous cohomology classes $\mu_k\in H_c^{2k+1}(GL(\bC),\bR)$:
\[
K_{2k+1}(\bC)\cong \pi_{2k+1}BGL(\bC)^+\xrightarrow h H_{2k+1}BGL(\bC)\xrightarrow{\cap \mu_k}\bR
\]
where $h$ is the Hurewicz map on $BGL(\bC)^+$, a space with the same homology as $BGL(\bC)$.
See \cite{Dinakar}. Since $|\text{\em Wh}_\bullet^h(\bC,1)|$ is rationally equivalent to $\Omega K(\bC)$, we are able to obtain universal higher torsion invariants
$$\tau_{k} \in H^{2k}(|\text{\em Wh}_\bullet^h(\bC,1)|;\bR). $$
These universal classes will be used to define higher torsion. The explicit cohomology class used in \cite{I02} comes from the Dupont formulation of the Kamber-Tondeur form \cite{Dupont}.

\subsection{Definition of higher torsion}

Consider again a fiber bundle $F \to W \to M$, where we now assume that $\pi_1 M$ acts unipotently on the rational fiber homology $H_*(F_b;\bQ)$. We would like to define cohomology classes in $H^{2k}(M;\bR)$ by pulling back the class $\tau_{k}$ via the map $$\xi:M \to |\text{\em Wh}_\bullet^h(\bC,1)| $$ given by a choice of fiberwise (generalized) Morse function $f:W \to \bR$ and choice of metric. 
As explained above, $\tau_k$ is the restriction to $|\text{\em Wh}_\bullet^h(\bC,1)|$ of the universal higher torsion class $\tau_k\in H^{2k}(\text{\em Wh}_\bullet^h(\bC,U(1));\bR)$ given in \cite[subsection 2.7]{I05} and $\xi$ is the ``expansion functor'', see \cite[Def 2.18]{I05}.

At this point questions of existence and uniqueness of such $f$ arise. Although it is a (nontrivial) fact that fiberwise generalized Morse functions exist on any fiber bundle, it is not true that any two such functions will be joined by a family of such functions. Indeed, the map $\xi:M \to |\text{\em Wh}_\bullet^h(\bC,1)| $ obtained by the above construction may not be unique up to homotopy. 

There is a canonical way to select the {\em correct} map $M \to |\text{\em Wh}_\bullet^h(\bC,1)| $ by means of a {\em framed function}. For any fiber bundle $F\to W \to M$, and any two framed functions $f_0,f_1:W \to \bR$, by Theorem \ref{thm: framed function} there exists a framed function on the product bundle $W \times [0,1] \to M \times [0,1]$ which restricts to $f_0$ and $f_1$ over $M \times \{0\}$ and $M \times \{1\}$ respectively. This gives a map 
$$M \times [0,1] \to  |\text{\em Wh}_\bullet^h(\bC,1)|$$
which may be viewed as a homotopy between the maps $M \to|\text{\em Wh}_\bullet^h(\bC,1)| $ given by $f_0$ and $f_1$. Hence by choosing a framed function on $W$ we get a well-defined and distinguished homotopy class of map
 $$\xi_0: M \to |\text{\em Wh}_\bullet^h(\bC,1)| .$$

\begin{definition}  Assume that $\pi_1 M$ acts unipotently on $H_*(F_b;\bQ)$. The {\em higher torsion classes} $\tau_{k}(W) \in H^{2k}(M;\bR)$  are defined as the pullback of the universal classes $\tau_{k} \in H^{2k}( |\text{\em Wh}_\bullet^h(\bC,1)| ;\bR)$ under the homotopically unique map $\xi_0: M \to |\text{\em Wh}_\bullet^h(\bC,1)| $ determined by a framed function on $W$. \end{definition} 

\begin{example} The degree zero class $\tau_0(W,\rho)  \in H^0(M;\bR)$ is just the usual Franz-Reidemeister torsion of the fiber twisted by the local system $\rho|_{\pi_1 F}$, maybe after taking absolute value and natural logarithm depending on conventions \cite{F35,R35}.  \end{example}

\subsection{Higher Reidemeister torsion with twisted coefficients}\label{ss: higher torsion with G coef} 

We can generalize the above discussion to fiber bundles $F \to W \to M$ equipped with a rank 1 unitary local system $\rho: \pi_1 M \to U(1)$ with finite image $G$ such that the action of $\pi_1 M$ on the twisted fiber homology $H_*(F_m;\bC^\rho)$ is unipotent.  The framed function theorem still gives us a well-defined map (up to homotopy)
 $$\xi_\rho: M \to |\text{\em Wh}_\bullet^h(\bC,G)| $$
 and we get classes $\tau_k(W,\rho)\in H^{2k}(M;\bR)$ by pulling back the universal classes. In fact, if real coefficients are used instead of complex coefficients, then the odd classes automatically vanish, i.e. the pullback of $\tau_{2k+1}$ to $|\text{\em Wh}_\bullet^h(\bR,G)|$ is zero, $G=\{\pm1\}$. So to get nontrivial odd torsion classes $\tau_{2k+1}(W,\rho)\in H^{4k+2}(M;\bR)$ one needs to use a nontrivial unitary local system. 
 
 The use of unitary local systems can be convenient depending on the application, for example in \cite{AI} the higher Reidemeister torsion of circle bundles over the sphere is shown to compute a stable version of the Euler number. However, for the specific version of higher torsion we will focus on in the present article there is no local system. Indeed we will use $\bZ$ coefficients, and so we will only get the even classes $\tau_{2k}$ in $H^{4k}(M;\bR)$.  
 
\subsection{The framing principle}\label{ss:FramingPrinciple} 

A useful result for computations of higher torsion is the {\em framing principle}, which gives the precise correction term needed to compute the higher Reidemeister torsion for a function $f:W \to \bR$ with moderate singularities, even if the stable bundle $E$ is not trivial (and hence $f$ is not frameable). The formula \cite[Theorem 4.11]{I02} is the following.
\begin{theorem}\label{thm: framing} If  $f:W \to \bR$ is a function with moderate singularities, then
\begin{equation}\label{eq:FramingPrinciple}
\tau(W,\rho)=\sum_j\tau_j(f,\rho)+ \frac12 \sum_k (-1)^k \zeta(2k+1) \pi_\ast^{\Sigma f} \left( ch_{2k}(\gamma_f\otimes\bC)\right) 
\end{equation}
where the terms are defined below.
\end{theorem}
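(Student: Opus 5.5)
This is Igusa's framing principle \cite[Theorem 4.11]{I02}, so the plan is to reduce the general case to the framed case, where the formula holds by definition, and then account for the non-triviality of $\gamma_f$ by a universal computation.

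First I would isolate the stable bundle. If $\gamma_f$ is trivial and framed, then $f$ is a framed function. By the framed function Theorem \ref{thm: framed function} together with the definition of $\xi_0$, the class $\tau(W,\rho)$ is the pullback of the universal class along the map given by $f$. That pullback equals $\sum_j \tau_j(f,\rho)$, and the correction term vanishes, so the framed case is immediate.

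Second, I would show that the difference
$$\tau(W,\rho)-\sum_j \tau_j(f,\rho)$$
depends only on the pair $(\Sigma_f \to M, \gamma_f)$. Moreover, it should be additive: under disjoint union of singular sets in a fixed fiber, and under direct sum of stable bundles. Here is the key construction. Given $f$ with moderate singularities, use Theorem \ref{thm: framed function}, relative to a neighborhood of a framed region, to compare $f$ on $W\times[0,1]$ with a framed function. The obstruction to extending framings is localized near $\Sigma_f$. Replacing a neighborhood of each component of $\Sigma_{f,k}$ by a fiberwise "lens-space/sphere-bundle" local model then changes the Whitehead-category map only by a map factoring through the space of Morse data of a single critical point twisted by $\gamma_f$. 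This yields a natural transformation from real $K$-theory (stable bundles over $\Sigma_f$) to $H^*(M;\bR)$ through $\pi_*^{\Sigma f}$. Such a natural additive transformation must be given by a universal characteristic class, namely a combination of components of the Chern character of $\gamma_f\otimes\bC$. Only the $ch_{2k}$ survive, because the classes are real and the odd ones vanish.

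Third, I would compute the universal coefficients. It suffices to treat a single example per degree in which both sides are known independently, for instance a linear sphere bundle $S(\xi\oplus\bR)\to M$ with the height function. That function has exactly two fiberwise critical points with stable bundles $0$ and $\xi$, and its torsion is known via the Bökstedt/Kamber--Tondeur computation of the torsion of sphere bundles in terms of $\zeta(2k+1)$ times Chern character classes. Matching the two sides fixes the coefficient $\tfrac12(-1)^k\zeta(2k+1)$. The main obstacle is this computation: evaluating the Kamber--Tondeur/Borel regulator form on the family of chain complexes arising from a rotating Morse function on a circle or lens space bundle. That is where the polylogarithm values $\zeta(2k+1)$ appear. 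I would do it first on the universal $S^1$-bundle case with unitary twisting, using the dilogarithm and polylogarithm computation of \cite{IK93a,IK93b}, and then pass to real bundles by complexification and the splitting principle.
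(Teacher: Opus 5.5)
The paper gives no proof of this statement. It quotes it from \cite[Theorem 4.11]{I02}, so your first sentence is exactly what the paper does. Read as a proof, however, your sketch has two genuine gaps.

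\textbf{Step 2 is asserted, not argued.} This step carries the content of the framing principle. Using Theorem \ref{thm: framed function} relative to the complement of a neighborhood of $\Sigma_f$ is a reasonable starting point for localizing the discrepancy. But you then still have to show three things:
\begin{enumerate}
\item $\tau(W,\rho)-\sum_j\tau_j(f,\rho)$ depends only on $(\Sigma_f\to M,\gamma_f)$;
\item it has the form $\pi_*^{\Sigma f}T(\gamma_f)$ for a natural class $T$, with the alternating signs making the contributions along the cusp locus cancel;
\item $T$ is additive.
\end{enumerate}
This needs either a push-down (transfer) theorem for higher torsion along $\Sigma_f\to M$, which is a covering only away from the cusps, or a classification of functions with moderate singularities, up to homotopy through such functions, by their formal data $(\Sigma_f,\gamma_f)$. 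The sentence about ``lens-space/sphere-bundle local models'' supplies neither.

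\textbf{Step 3 is circular as written.} After your localization, the contribution at a Morse point is up to sign just the higher torsion of the linear sphere bundle $S(\gamma)$. The reason is that $-\|x_\gamma\|^2+\|x_{\gamma^\perp}\|^2$ has a constant one-cell Morse complex, so the whole discrepancy is the torsion of the handle pair, i.e.\ $\tau(D(\gamma))-\tau(S(\gamma))=-\tau(S(\gamma))$. So the value you propose to import is precisely the unknown. Likewise, for the height function on $S(\xi\oplus\bR)$ one has $\tau_j(f,\rho)=0$, and the formula merely says that $\tau(S(\xi\oplus\bR))$ equals the correction term.

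Neither source you name computes $\tau(S(\xi\oplus\bR))$. B\"okstedt's theorem is a rational equivalence with no normalization, and the Kamber--Tondeur form only defines the universal class. In the Igusa--Klein theory, the value on linear sphere bundles is a consequence of the framing principle, not an input to it.

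A non-circular calibration compares two functions on the same bundle. For example, on $S(L\oplus\bR)$ with $L$ a complex line bundle, take a function with minima at the two poles and critical points of index $1$ and $2$ along the equatorial circle bundle $S(L)$.
\begin{itemize}
\item Its negative eigenspaces are spanned by the $\bR$-direction and the oriented circle direction, so its stable bundle is rationally trivial.
\item Granting Step 2, $\tau(W)$ therefore equals its Morse-complex torsion, a non-constant family built from a fiberwise Morse function on $S(L)$.
\item Evaluating that torsion is where the polylogarithm computation you allude to enters, with $\mathcal{L}_s(z)=\sum_{m\geq1}z^m/m^s$. The value $\zeta(2k+1)$ appears, for instance, through $\sum_{u^n=1}\mathcal{L}_{2k+1}(u)=n^{-2k}\zeta(2k+1)$.
\item Equating with the height-function computation, and using $ch_{2k}(L_{\bR}\otimes\bC)=2c_1(L)^{2k}/(2k)!\neq0$ over $\bC P^{2k}$, then determines the coefficient $\frac12(-1)^k\zeta(2k+1)$.
\end{itemize}
Your closing sentences point toward the circle-bundle computation. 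What is missing is the link between that computation and the coefficient: this comparison of two functions on one bundle, or an alternative such as a transfer argument along $S(V)\to\mathbb{P}(V)$.
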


\begin{itemize}
\item  $\tau_j(f,\rho)\in H^{2j}(M;\bR)$ is the pull-back of the universal class $\tau_j$ under $M \to |\text{\em Wh}_\bullet^h(\bC,G)|$, the mapping  given by the family of Morse chain complexes associated to the family of functions $F_m$, $,m\in M$. 
We are assuming that $\rho$ has a finite image $G\subset U(1)$.
\item $\Sigma_f$ is the fiberwise singular set of $f$. It has the same dimension as the base $M$ and is a union $\Sigma_f=\bigcup \Sigma_{f,k}$ as discussed in Appendix \ref{sec: moderate singularities}. 
\item $\gamma_f$ is  the stable bundle of $f$ as also discussed in Appendix \ref{sec: moderate singularities}.
\item The map $\pi_\ast^{\Sigma f}:H^{4k}(\Sigma)\to H^{4k}(M)$ is, at the cochain level, the alternating sum of push-down operators:
\[
	\pi_\ast^{\Sigma f}=\sum_i (-1)^i \pi_\ast:
	 C^{4k}(\Sigma_{f,i})\to C^{4k}(M).
\]
\end{itemize}

Explicitly, given a $4k$ cocycle $\varphi$ on $\Sigma_f$ we take the restriction $\varphi_i$ of $\varphi$ on each $\Sigma_{f,i}$. This is well-defined away from the boundary of $\Sigma_{f,i}$. The image of this in $C^{4k}(M)$ is the $4k$-cocycle which takes a small $4k$ simplex $\sigma$ in $M$, takes all sections $\tilde\sigma_i$ of $\sigma$ to $\Sigma_{f,i}$ and evaluates
\[
    \pi_\ast^{\Sigma f}(\varphi)(\sigma)=\sum (-1)^i \varphi_i(\tilde\sigma_i).
\]
The undefined terms cancel since, when $\tilde\sigma_i$ reaches the boundary of $\Sigma_{f,i}$, it is at a cusp which is the same location as a section $\tilde\sigma_{i+1}$ (or $\tilde\sigma_{i-1}$). The alternating signs makes these undefined terms equal with opposite sign. So, $\pi_\ast^{\Sigma f}$ is well-defined. See \cite[Proposition 4.8]{I02} for more details of this argument. It is shown there that $\pi_\ast^{\Sigma f}$ is a related to the Becker-Gottlieb transfer $\text{tr}_M^W:H^\ast(W)\to H^\ast(M)$ by $\text{tr}_M^W=\pi_\ast^{\Sigma f}\circ i^\ast$ where $i^\ast:H^\ast(W)\to H^\ast(\Sigma_f)$ is the restriction map induced by the inclusion $i:\Sigma_f \to W$.

Note that the correction term in \eqref{eq:FramingPrinciple} is nonzero only in degrees $4k$. In degrees $2j=4k+2$, when $j$ is odd, there is no correction term (i.e. the correction term is zero.)

Also note that the term $\sum_j\tau_j(f)$ changes by a sign if the indices of the fiberwise Morse chain complexes are all shifted by one in either direction. More precisely, the index shift operator on $\text{\em Wh}_\bullet^h(\bC,G)$ induces map $|\text{\em Wh}_\bullet^h(\bC,G)| \to |\text{\em Wh}_\bullet^h(\bC,G)|$ which sends $\tau_j$ to $-\tau_j$ \cite[Theorem 3.10]{I05}. However, in this paper we are always suspending an even number of times. So there is no change of sign.


To summarize: 

{\em We can use the family of Morse chain complexes $C(f_m)$, $m \in M$ of a function with moderate singularities $f:W \to \bR$ to compute the higher torsion of $W$ over $M$ if we add correction terms in degree $4k$ given by a suitable multiple of the push-down to $H^\ast(M)$ of the Pontryagin character of the stable bundle $\gamma_f$ over $\Sigma_f$.}

\subsection{Fibers with boundary}\label{sec: rel torsion}

Higher torsion invariants may also be defined for fiber bundles $F \to W \to M$ where the fiber $F$ is a compact manifold with boundary. In the {\em absolute} case we just use functions $f:(W,\partial W) \to ([0,1],1)$ with moderate singularities such that for each fiber $F$ we have that $\partial F$ is a regular level set. More generally, it suffices to impose the condition that $df(v)>0$ for outwards pointing tangent vectors $v$ along $\partial F$. However it is also convenient to consider {\em relative} versions in which the fiberwise homology of the function computes a relative homology group. As this relative case is directly relevant to the present article, we discuss some examples below. 

For example, the construction may be applied in the case in which the fiber $F$ is a cobordism on a manifold $X$, so that $\partial F = \partial_0 F \coprod \partial_1 F$ and we have an identification $\partial_0 F \simeq X$. More precisely, this means that $\partial W =\partial_0 W \coprod \partial_1 W $ and we have a trivialization of the fiber bundle $\partial_0 W \to M$, i.e. a fibered diffeomorphism $\partial_0 W \simeq M \times X$. In this case higher torsion is defined using functions $(W,\partial_0W,\partial_1 W) \to ([0,1],0,1)$ with moderate singularities and yields an invariant of such fiber bundles $W$ equipped with a trivialization  $\partial_0 W \simeq M \times X$, or more generally such that $df(v)>0$ for outwards pointing tangent vectors $v$ along $\partial_1 F$ and $df(v)>0$ for inwards pointing tangent vectors $v$ along $\partial_0 F$.

The fiber $F$ could be also be a compact manifold with corners, thought of as a cobordism on a compact manifold with boundary (or even corners) $X$. In this case the boundary $\partial F$ is divided into a {\em horizontal }boundary $\partial_h F$ and a {\em vertical boundary} $\partial_v F$, with $\partial_v F \simeq \partial_0 F \coprod \partial_1 F$ and an identification $\partial_0 F \simeq X$ as before. The boundary $\partial W$ of such a fiber bundle $F \to W \to M$ is similarly decomposed, and such fiber bundles should be equipped with a fibered diffeomorphism $\partial_0 W \simeq M \times X$. The torsion is again defined using functions $(W,\partial_0W,\partial_1 W) \to ([0,1],0,1)$ with moderate singularities, or more generally such that $df(v)>0$ for outwards pointing tangent vectors $v$ along $\partial_1 F$ and $df(v)>0$ for inwards pointing tangent vectors $v$ along $\partial_0 F$.

Finally, yet another variant is the case in which $F$ is a compact manifold with boundary and $\partial_0 F \subset \partial F$ is a codimension zero submanifold, identified with some fixed compact manifold with boundary $X$. Such bundles $F \to W \to M$ come equipped with a trivialization of the sub-bundle $\partial_0 W \subset \partial W$, i.e. a fibered diffeomorphism $\partial_0 W \simeq M \times X$. The torsion is computed using functions with moderate singularities $f:W \to \bR$ which have $\partial_0 W$ as a regular level set of $f|_{\partial W}$, such that inwards pointing vectors $v$ along the interior of $\partial_0 F$ satisfy $df(v)>0$ and such that outwards pointing vectors $v$ along  $\partial F \setminus \partial_0 F$ satisfy $df(v)>0$ (so the fiberwise Morse homology computes $H_*(F;\partial_0F)$ as before).

In any of these cases, we speak of the torsion of the {\em bundle pair} $(W,\partial_0W)$ and  there is a simple formula relating the higher torsion of $W$, of $\partial_0 W$ and of the pair $(W,\partial_0W)$. Indeed, from Corollary 3.18 on page 39 in \cite{Igu05} we have the following result:

\begin{theorem}[\cite{Igu05}]\label{thm: relative} For a bundle pair $(W,\partial_0W)$ we have
\[
        \tau_k(W)=\tau_k(W, \partial_0W)+\tau_k(\partial_0W)
\]
\end{theorem}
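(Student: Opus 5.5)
The plan is to realize all three torsion classes with a single framed function on $W$ whose fiberwise Morse complexes carry a two-step filtration. One piece of the filtration computes the torsion of $\partial_0 W$, and the quotient computes the torsion of the pair $(W,\partial_0 W)$. Additivity of the universal classes $\tau_k$ under extensions in the Whitehead category then gives the formula.

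\textbf{Step 1 (the function on $\partial_0 W$).} Choose a framed function $g:\partial_0 W\to\bR$ with moderate singularities, which exists by Theorem \ref{thm: framed function}. It computes $\tau_k(\partial_0 W)$ via the homotopically unique map $\xi_0$, with the appropriate boundary conditions along $\partial X$ if $X$ has boundary.

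\textbf{Step 2 (extension to a collar).} Take a fiberwise collar $\partial_0 W\times[0,\epsilon)\subset W$ with inward coordinate $t$. On it, put $h(x,t)=g(x)+t^2-\delta$, with $\delta$ small and $g$ rescaled so that all its critical values lie in a small interval near $0$. Then:
\begin{enumerate}
\item the fiberwise critical points of $h$ are exactly those of $g$, at $t=0$, with the same Morse indices, since the added direction is a minimum;
\item the negative-eigenspace bundles agree, so the framing of $g$ induces a framing of $h$ near $\partial_0 W$.
\end{enumerate}

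\textbf{Step 3 (extension to all of $W$).} We first have to deform $h$ so that its values increase strictly as one moves from the collar into the rest of $W$. Concretely, modify $h$ near $t=\epsilon/2$ so that there $\partial h/\partial t>0$ and $h$ exceeds every critical value of $g$. Then apply Theorem \ref{thm: framed function} relative to this collar, together with the boundary condition on $\partial F\setminus\partial_0 F$, to obtain a framed function $f:W\to\bR$ that restricts to $h$ near $\partial_0 W$.

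The critical points of $f$ split into two groups: those of $g$, which have low values, and the remaining ones, which lie above the collar level $c$. Hence for each $m\in M$ the Morse complex $C_*(f_m)$ contains $C_*(g_m)$ as a subcomplex, namely the part below $c$. The quotient complex is the Morse complex of $f$ restricted to $\{f\ge c\}$. This restriction is a framed function computing $H_*(F,\partial_0 F)$: its sublevel boundary at $c$ is a parallel copy of $\partial_0 F$ with inward derivative positive. So the quotient computes $\tau_k(W,\partial_0W)$. Birth/death events never mix the two groups, because the critical values stay separated by $c$ for all $m$. Therefore the family is a $\Delta^\bullet$-family of short exact sequences of filtered complexes, i.e.\ a map from $M$ into the Whitehead space of extensions.

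\textbf{Step 4 (additivity).} The remaining point is that the pullback of $\tau_k$ along
\[
\text{(extensions)}\to |\text{\em Wh}_\bullet^h(\bC,G)|,\qquad (A\subset B)\mapsto B,
\]
equals the sum of its pullbacks along $B\mapsto A$ and $B\mapsto B/A$.

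We also have to deal with acyclicity. The fiberwise mapping-cone construction that kills the (unipotent) homology has to be carried out compatibly with the filtration, using the long exact homology sequence of the pair $(F,\partial_0F)$. This ensures the three acyclized families still form an extension.

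Additivity itself is the step I expect to be the main obstacle. I would handle it by Waldhausen's additivity theorem: the extension category maps to the product via $(A,\,B/A)$ by an equivalence, and the map $B$ is homotopic to $A\oplus B/A$. Combined with the fact that the Borel/Kamber--Tondeur classes are primitive, i.e.\ additive under direct sum, this gives $\tau_k(B)=\tau_k(A)+\tau_k(B/A)$.

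A more hands-on alternative is at the cochain level using Igusa's Dupont-form representative. Deform the extension family linearly to the split family $A\oplus B/A$, using the straight-line path of upper-triangular differentials. Then check that the torsion cocycle is additive on block-diagonal families.

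Finally, the framings of the three functions are compatible, since the framing of $f$ restricts to those of $g$ and $f|_{\{f\ge c\}}$. So no correction terms from the framing principle appear, and the formula follows.
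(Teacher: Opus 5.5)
The paper gives no proof of this statement; it is quoted from \cite{Igu05}. Your outline is essentially the standard route in Igusa's framework, and its architecture is sound. It has two ingredients. The first is a framed function that near $\partial_0W$ is $g$ plus a quadratic in the collar direction, so that the fiberwise Morse complexes form extensions $0\to C_*(g_m)\to C_*(f_m)\to C_*(f_m)/C_*(g_m)\to 0$ whose quotient computes the relative torsion. The second is additivity of the universal classes $\tau_k$ on such upper-triangular families. Several steps need repair, however.

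First, with $h=g+t^2-\delta$ the critical points lie on $\partial_0F$ itself and $\partial_t h=0$ there. So $h$ violates the outward boundary condition used to define the absolute torsion of $W$. Use $g(x)+(t-a)^2$ with $0<a<\epsilon$ instead.

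Second, the relative framed function theorem only prescribes $f$ near the collar. Nothing in it forces the remaining critical values above $c$, and your separation argument (no birth/death mixing) depends on this. It is cleaner to build $f$ in the opposite order:
\begin{enumerate}
\item On $W'=\{t\ge t_0\}$, take a framed function for the pair $(W',\{t=t_0\})$. By definition this computes $\tau_k(W,\partial_0W)$.
\item Add a nondecreasing function of the collar coordinate of $W'$, constant outside a thin collar on which the function is already increasing, so that its values exceed $c$ away from $\{t=t_0\}$. This creates no critical points and keeps the framing.
\item Glue the result to your collar function, arranged to equal $c+(t-t_0)$ near $t=t_0$.
\end{enumerate}

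Third, the acyclizations that define the three torsions (cones on the homology) do not form an extension, because the homology long exact sequence has nonzero connecting maps. This already happens for the pair $(D^{4N},T)$ in Proposition \ref{prop: stab framing}. You can choose compatible acyclizations, for instance by coning off $H(A)\oplus H(B/A)$ with the connecting map as differential. You must then observe that these differ from the defining acyclizations by a family built from the homology long exact sequence, which is Milnor's correction term \cite{M66}. With trivial homology local systems this family is constant, so it is invisible to $\tau_k$ for $k\ge 1$; for $k=0$ it must be absorbed by compatible homology bases.

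Finally, in your cochain-level version of additivity, the straight-line path $t\phi$ of off-diagonal blocks is not by itself a homotopy in $|\text{\em Wh}_\bullet^h(\bC,U(1))|$. For $t\neq 1$ it differs from the original family by a non-unitary rescaling of the basis of $A$. The argument still works for two reasons:
\begin{enumerate}
\item The rescaling is a constant change of basis, which does not affect $\tau_k$ for $k\ge 1$.
\item The Kamber--Tondeur cocycle in Dupont's form depends continuously on the matrix entries, so you may pass to the limit $t\to 0$ and reduce to the split family $A\oplus B/A$.
\end{enumerate}
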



\subsection{Noncompact fibers} 

The same construction works if the fiber $F$ is noncompact and the bundle $W\to M$ is equipped with a reference fibration at infinity $f: W \to \bR$. In this case $\tau_k(W,\rho)$ depends on the choice of fibration at infinity, though this may be omitted from the notation if there is no danger of confusion. For example, one could define and compute higher torsion using functions with moderate singularities which are homotopic through fibrations at infinity to our reference fibration at infinity. The fiberwise Morse homology is computing $H_*(F,\partial_0F)$ where $\partial_0F$ is a sublevel set $f|_F<-c$ for $c>0$ very large, so this version should also be thought of as a relative form of higher torsion.

 In certain applications the bundle $W$ is assumed to be trivialized outside of a compact set and it makes sense to demand that the admissible functions $f:W \to \bR$ agree with some fixed fibration at infinity outside of this compact set. With such a restriction imposed, the class $\tau_k(W,\rho)$ does not depend on the choice of admissible function since deformations of admissible functions have compact support. 
 
 A very precise formulation of the restriction to compact sets is given in \cite[Lemma 3.20]{I02} where singularities of a family of functions are confined to a family of ``convex sets'' which are basically cobordisms embedded in the fiber. The family of convex sets gives a family of chain complexes which embeds in the chain complex of the entire fiber in the case when the fiber is compact. In general, we can define the family of total chain complexes to be that given by any family of convex sets which contains all the critical points of the function. By \cite[Lemma 3.20]{I02}, these all give the same chain complex. So, the family of chain complexes and thus the higher torsion is well-defined. The fiber also has a well-defined Euler characteristic $\chi(F)$ given by this finite chain complex. In the case of functions of tube type we have $\chi(F)=\pm1$.

 In any event, higher Reidemeister torsion is a {\em stable invariant}, which means that if $W$ is replaced by the fiberwise stabilization $W \times \bR^4$ and the function with mild singularities $f:W \to \bR$ is replaced with $f(w)+||x||^2-||y||^2$ for $(w,x,y) \in W \times \bR^2 \times \bR^2$, then the homotopy class of the map $M \to  |\text{\em Wh}_\bullet^h(\bC,1)| $ and sign of the fiber Euler characteristic $\chi(F)$ will remain unchanged. 
 
 Furthermore, by the framing principle, if $f:W \to \bR$ is replaced by $f\oplus q : W \times \bR^N \to \bR$, where $f\oplus q(w,v)=f(w)+q_{\pi(w)} (v)$ for $q_m:\bR^N \to \bR$ a family of non-degenerate quadratic forms on $\bR^N$ of even index parametrized by $m \in M$, then the homotopy class of the map $M \to  |\text{\em Wh}_\bullet^h(\bC,1)| $ changes but in a controlled way: the pullback of the universal class $\tau_k$ changes precisely by adding $\chi(F)$ times a (certain multiple) of the Pontraygin character of the stable bundle of the family $q_m$, $m \in M$, i.e. the stable vector bundle formed by the negative eigenspaces of $q_m$. The $\chi(F)$ factor comes from the fact that the composition of $p^\ast:H^\ast(M)\to H^\ast(W)$ with the Becker-Gottlieb transfer $tr_M^W:H^\ast(W)\to H^\ast(M)$ is multiplication by $\chi(F)$.

\begin{thebibliography}{AVGZ85}

\bibitem[A12]{A12} M. Abouzaid,
\newblock  Nearby Lagrangians with vanishing Maslov class are homotopy equivalent,
\newblock  Invent. Math. 189 (2012) 251–313


\bibitem[AACK]{AACK} M. Abouzaid, D. Alvarez-Gavela, S. Courte, T. Kragh, Normal invariant of nearby Lagrangians via twisted derivative, arXiv:2505.12515, submitted.


\bibitem[ACGK]{ACGK} M. Abouzaid, S. Courte, S. Guillermou, T. Kragh, Twisted generating functions and the nearby Lagrangian conjecture, to appear in Duke Math journal.


\bibitem[AK]{AK} M. Abouzaid, T. Kragh,  Simple homotopy equivalence of nearby
Lagrangians, Acta Math. 220 (2018)

  \bibitem[AG18a]{AG18a} D. \'{A}lvarez-Gavela, 
\newblock  Refinements of the holonomic approximation lemma,  
\newblock Algebraic \& Geometric Topology 18
(2018) 2265–2303.

\bibitem[AG18b]{AG18b} D. \'{A}lvarez-Gavela, 
\newblock The simplification of singularities of Lagrangian and Legendrian fronts,  
\newblock Inventiones Mathematicae, 214(2) (2018) 641–737.      

\bibitem[AD22]{AD22} D. Alvarez-Gavela, D. Darrow
\newblock Caustics of Lagrangian homotopy spheres with stably trivial Gauss map, Journal of Symplectic Geometry
 


\bibitem[AI]{AI} D. Alvarez-Gavela, K. Igusa, 
\newblock A Legendrian Turaev torsion via generating families. Journal de l’\'Ecole polytechnique—Mathématiques 8 (2021): 57-119.

\bibitem[AIS]{AIS}  D. Alvarez-Gavela, K. Igusa, M. Sullivan 
\newblock Existence of a restricted ruling implies vanishing of nearby Lagrangian torsion

\bibitem[A]{A}
V.I~Arnold
\newblock On a characteristic class entering quantization conditions

\bibitem[AGV]{AGV}
V.I.~Arnold, S. M.~Gusein-Zade, A. N.~Varchenko.
\newblock Singularities of differentiable maps. Volume 1
\newblock Mod. Birkhauser Class. Birkhauser/Springer, New York, 2012, xii+382 pp.

\bibitem[B27]{B27}
P.~ Benk
\newblock Caustics of Lagrangians with stably trivial Gauss map,
\newblock In preparation

\bibitem[BL95]{BL95}
J.M.~Bismut, J.~Lott.
\newblock Flat vector bundles, direct images and higher real analytic
torsion.
\newblock J. Amer. Math. Soc. 8 (1995), no. 2, 291–363.

\bibitem[BG01]{BG01}
J.M.~Bismut, S.~Goette. 
\newblock Families torsion and Morse functions.
\newblock  Soci\'et\'e math\'ematique de France, Ast\'erisque (275) 2001.

\bibitem[B82]{B82}
M. B\"okstedt
\newblock The rational homotopy type of $\Omega$WhDiff.
\newblock Algebraic Topology Aarhus 1982: Proceedings of a conference held in Aarhus, Denmark, August 1–7, 1982. Berlin, Heidelberg: Springer Berlin Heidelberg, 2006.


\bibitem[C96]{C96}
Y.~Chekanov.
\newblock Critical Points of Quasi-Functions and Generating Families of Legendrian Manifolds
{\em Funktsional. Anal. i Prilozhen 30(2) 56--69, 1996.}



\bibitem[CP25]{CP25}
S.~Courte and N.~Porcelli.
\newblock On the parametrised Whitehead torsion of families of nearby Lagrangian submanifolds.
\newblock {\em arXiv preprint} 2506.06110 (2025).




\bibitem[D76]{Dupont} Dupont, Johan L. "Simplicial de Rham cohomology and characteristic classes of flat bundles." Topology 15.3 (1976): 233--245.

\bibitem[DWW]{DWW03}
W.~Dwyer, M.~Weiss, and B.~Williams.
\newblock A parametrized index theorem for the algebraic K-theory Euler class.
\newblock {\em Acta Mathematica}, 190(1), 2003.


  
  \bibitem[E72]{E72}
  Y. Eliashberg
  \newblock Surgery of singularities of smooth mappings.
  \newblock Mathematics of the USSR-Izvestiya 6.6 (1972): 1302.

\bibitem[EG98]{EG98}
Y.~Eliashberg and M.~Gromov.
\newblock Lagrangian intersection theory : Finite dimensional approach.
\newblock {\em American Mathematical Society Translations}, 2(186), 1998.

  
  \bibitem[EM00]{EM00}
  Y. Eliashberg and N. Mishachev.
  \newblock Wrinkling of smooth mappings-II Wrinkling of embeddings and K. Igusa’s theorem. 
  \newblock Topology, 39(4), 711-732.


\bibitem[EM12]{EM12}
Y.~Eliashberg and N.~Mishachev.
\newblock The space of framed functions is contractible.
\newblock {\em Essays on mathematics and its applications, Springer}, pages
  81--109, 2012.
  




\bibitem[Fra35]{F35}
W.~Franz.
\newblock \"uber die torsion einer \"uberdeckung.
\newblock {\em J. Reine Angew. Math}, 173:245--254, 1935.

\bibitem[Fuk95]{F95}
K.~Fukaya.
\newblock The symplectic s-cobordism conjecture: a summary.
\newblock {\em Geometry and Physics, Aarhus}, pages 209--210, 1995.

\bibitem[FSS]{FSS}
K. Fukaya, P. Seidel, I. Smith
\newblock{Exact Lagrangian submanifolds in simply-connected
cotangent bundles}, 
Invent. Math. 172 (2008) 1–27

\bibitem[G90]{G90}  E. Giroux, Formes generatrices d’immersions lagrangiennes dans un espace cotangent, G\'eom\'etrie Symplectique et M\'ecanique. Lecture Notes in Mathematics, vol 1416. 1990.
    





\bibitem[GKS12]{GKS12}
S.~Guillermou, M.~Kashiwara, and P.~Schapira.
\newblock Sheaf quantization of {H}amiltonian isotopies and applications to
  nondisplaceability problems.
\newblock {\em Duke Mathematical Journal}, 161(2):201--245, 2012.

\bibitem[Gui12]{Gui12}
S.~Guillermou.
\newblock Quantization of conic Lagrangian submanifolds of cotangent bundles.
\newblock{\em  arXiv preprint}  1212.5818 (2012).





\bibitem[HW73]{HW73}
A.E. Hatcher and J.B. Wagoner.
\newblock Pseudo-isotopies of compact manifolds.
\newblock {\em Ast\'erisque}, 6, 1973.



\bibitem[Igua]{K}
K.~Igusa.
\newblock The generalized Grassmann invariant.
\newblock (preprint).



\bibitem[Igu79]{I79}
K.~Igusa.
\newblock The $Wh_3(\pi)$-obstruction for pseudoisotopy.
\newblock PhD thesis, 1979.

\bibitem[Igu82]{I82}
K.~Igusa.
\newblock What happens to Hatcher and Wagoner's formula for
  $\pi_0(\cC(M))$ when the first Postnikov invariant of $M$ is nontrivial?,
  volume 1046 of {\em Lecture Notes in Mathematics}.
\newblock Springer, 1982.

\bibitem[Igu87]{I87}
K.~Igusa.
\newblock The space of framed functions.
\newblock {\em Transactions of the American mathematical society},
  301:431--477, 1987.
  
  
\bibitem[Igu05]{Igu05}  Higher Complex Torsion and the Framing Principle
 {\em Memoirs of the American Mathematical Society}
Volume: 177; 2005; 94 pp


\bibitem[Igu02]{I02}
K.~Igusa.
\newblock Higher Franz-Reidemeister torsion, 
{\em AMS/IP  studies in advanced mathematics}.
\newblock Vol 31. American Mathematical Society, 2002.

\bibitem[Igu04]{I04}
K.~Igusa.
\newblock Combinatorial Miller–Morita–Mumford classes and Witten cycles, 
{\em Algebraic \& Geometric Topology}.
\newblock 4(1), 473-0520, 2004.

\bibitem[Igu05]{I05}
K.~Igusa.
\newblock Higher complex torsion and the framing principle, volume 177 of
  {\em Memoirs of the American Mathematical Society}.
\newblock American Mathematical Society, 2005.

\bibitem[Igu08]{Igu08}
K.~Igusa.
\newblock Axioms for higher torsion invariants of smooth bundles.
\newblock Journal of Topology 1.1 (2008): 159-186.

\bibitem[IK93a]{IK93a}
K.~Igusa and J.~Klein.
\newblock The borel regulator map on pictures, i: A dilogarithm formula.
\newblock {\em K-Theory}, 7(3):201--224, 1993.

\bibitem[IK93b]{IK93b}
K.~Igusa and J.~Klein.
\newblock The {B}orel {R}egulator {M}ap on {P}ictures {II}: {A}n {E}xample from
  {M}orse {T}heory.
\newblock {\em K-Theory}, 7(3):225--267, 1993.

\bibitem[JT]{JT}
J. Jordan and L. Traynor. 
\newblock Generating family invariants for Legendrian links of unknots
\newblock  Algebraic \& Geometric Topology 6.2 J(2006): 895-933.



\bibitem[K89]{K89}
J.~Klein
\newblock The cell complex construction and higher R-torsion for bundles with framed Morse function.
\newblock {\em PhD thesis, Brandeis University} 1989


\bibitem[Kra18]{Kra18} T.~Kragh
\newblock Generating Functions in $\mathbb {R}^{2n}$ and the Hatcher-Waldhausen map
\newblock  arXiv preprint arXiv:1804.02557v3 (2018)

\bibitem[Kra13]{Kra13} T.~Kragh.
\newblock Parametrized ring-spectra and the nearby Lagrangian conjecture
\newblock {\em Geometry \& Topology, 17(2), 639-731}




\bibitem[Kra18]{Kra18} 
T.~Kragh.
\newblock Generating families for {L}agrangians in $\mathbb{R}^{2n}$ and the
  {H}atcher-{W}aldhausen map.
\newblock https://arxiv.org/abs/1804.02557, 2018.






\bibitem[La11]{La11}
F.~Laudenbach
\newblock Transversali\'e, courants et th\'eorie de Morse : un cours de topologie diff\'erentielle
\newblock{\em  \'Editions de l'\'Ecole Polytechnique}, 2011.


\bibitem[Mil]{M65}
J.~Milnor.
\newblock Lectures on the H-Cobordism Theorem. 
\newblock Princeton University Press, Princeton. 1965

\bibitem[Mil]{M66}
J.~Milnor.
\newblock Whitehead torsion.
\newblock {\em Bulletin of the American Mathematical Society}, 72(3), 1966.






\bibitem[Mil61]{M61}
J.~Milnor.
\newblock Two complexes which are homeomorphic but combinatorially distinct.
\newblock {\em Annals of Mathematics}, 74:575–590, 1961.

\bibitem[Mur12]{Mur12}
E.~Murphy.
\newblock Loose Legendrian Embeddings in High Dimensional Contact Manifolds.
\newblock ProQuest LLC, Ann Arbor, MI, 2012, 60 pp.

\bibitem[Na09]{Na09}
D. Nadler
\newblock David. Microlocal branes are constructible sheaves.
\newblock Selecta Mathematica 15.4 (2009): 563-619.



\bibitem[Ra]{Dinakar}
\newblock D.~ Ramakrishnan.
\newblock Regulators, algebraic cycles, and values of L-functions
\newblock
{\em Contemp. Math} 83, 183-310, 1989.

\bibitem[Rei35]{R35}
K.~Reidemeister.
\newblock Homotopieringe und linsenr\^aume.
\newblock {\em Hamburger Abhandl}, 11:102--109, 1935.


\bibitem[RS73]{RS71}
D.B. Ray and I.M. Singer.
\newblock R-torsion and the Laplacian for Riemannian manifolds.
\newblock {\em Advances in Mathematics}, 7:145–210, 1973.

\bibitem[R92]{R92}
J. Rognes
\newblock The Hatcher-Waldhausen map is a spectrum map.
\newblock Preprint series: Pure mathematics http://urn. nb. no/URN: NBN: no-8076 (1992).














\bibitem[Sik86]{S86}
J.C. Sikorav.
\newblock Sur les immersions Lagrangiennes dans un fibr\'e cotangent.
\newblock {\em Comptes Rendus de l'Academie de Sciences Paris}, 1(32):119--122,
  1986.
  
  
  

\bibitem[Sma61]{S61}
S.~Smale.
\newblock Generalized Poincaré's conjecture in dimensions greater than four.
\newblock {\em Annals of Mathematics}, 74:391--406, 1961.

\bibitem[S02]{S02} M. Sullivan, $K$-theoretic invariants for Floer homology. {\em Geometric \& Functional Analysis} GAFA 12.4 (2002): 810-872.



\bibitem[Th99]{Th99} D. Th\'eret, A complete proof of Viterbo’s uniqueness theorem on generating functions, Topology and its Applications 96 (1999) 249–266

\bibitem[T01]{T01}
L. Traynor, Lisa. 
\newblock Generating function polynomials for Legendrian links.
\newblock Geometry \& Topology 5.2 (2001): 719-760.


\bibitem[Tr22]{Trotman22}
D.~Trotman.
\newblock Some geometric properties of Whitney stratifications.
\newblock Dalat Uni J. of Science, Vol 12.2 (2022) 78 — 85.


\bibitem[V92]{V92}  C. Viterbo, Symplectic topology as the geometry of generating functions, Math. Ann. 292
(1992) 685–710.






\bibitem[Wag76]{W76}
J.B. Wagoner.
\newblock Diffeomorphisms, $K_2$, and analytic torsion.
\newblock {\em Algebraic and Geometric Topology (Proc. Sympos. Pure Math.,
  Stanford Univ., Stanford, Calif.)}, 1:23--33, 1976.

\bibitem[Wal82]{W82}
F.~Waldhausen.
\newblock Algebraic K-theory of spaces, a manifold approach.
\newblock {\em Canadian Mathematical Society Conference Proceedings}, 2(1),
  1982.

\bibitem[Whi]{W50}
J.H.C. Whitehead.
\newblock Simple homotopy types.
\newblock {\em American Journal of Mathematics}, 72(1), 1950.

\end{thebibliography}
\end{document}